\documentclass[11pt,a4paper]{article}  %
\usepackage{ifdraft}

\usepackage{etoolbox}

\ifdraft{}{\usepackage[numbers,sort&compress]{natbib}}  %
\newcommand{\bibfile}{\jobname.bib}  %
\newcommand{\universalbib}{ref.bib}
\ifdraft{\IfFileExists{\universalbib}{\renewcommand{\bibfile}{\universalbib}}{}}{}
\newcounter{cite}
\pretocmd{\cite}{\stepcounter{cite}}{}{}

\RequirePackage[mathlines]{lineno}
\ifdraft{\linenumbers}{}

\usepackage{amsmath,amsthm,amssymb,amsfonts}
\usepackage{mathtools}  %
\usepackage{empheq}
\usepackage{xcolor}
\usepackage[bbgreekl]{mathbbol}
\DeclareSymbolFontAlphabet{\mathbbm}{bbold}
\DeclareSymbolFontAlphabet{\mathbb}{AMSb}
\usepackage{bbm}
\usepackage{upgreek}
\usepackage{accents}
\usepackage{xspace}
\usepackage{rotating}
\usepackage{multirow,booktabs}
\usepackage[en-US]{datetime2}

\usepackage[normalem]{ulem}
\usepackage[toc,page]{appendix}

\usepackage{sectsty}
\sectionfont{\large}
\subsectionfont{\large}

\definecolor{darkblue}{rgb}{0,0.1,0.5}
\definecolor{darkgreen}{rgb}{0,0.5,0.1}
\definecolor{darkyellow}{rgb}{0.65,0.65,0.01}

\ifdraft{
    \setlength{\marginparwidth}{1.92cm}
    \usepackage[tickmarkheight=3pt,textsize=scriptsize,backgroundcolor=blue!16,linecolor=purple,bordercolor=purple]{todonotes}
}{
    \newcommand{\todo}[1]{}
    
}

\usepackage{graphicx}
\usepackage{tikz,tikzscale,pgf}
\usetikzlibrary{arrows,arrows.meta,patterns,positioning,decorations.markings,shapes}
\usepackage{pgfplots}
\usepackage{pgfplotstable}
\usepackage[justification=centering]{caption}
\usepackage{subcaption}  %
\usepgfplotslibrary{fillbetween}
\pgfplotsset{compat=1.11}

\ifdraft{
    \usepackage{silence}
    \WarningFilter{xcolor}{Incompatible color definition on}
    \WarningFilter{hyperref}{Draft mode on}
    \WarningFilter{refcheck}{Unused label}
    \WarningFilter{microtype}{`draft' option active}
    \WarningFilter{latex}{Writing or overwriting file} %
    \WarningFilter{latex}{Writing file} %
    \WarningFilter{latex}{Tab has} %
    \WarningFilter{latex}{Marginpar on page} %
    \WarningFilter{latex}{author given} %
}{}

\ifdraft{\usepackage{refcheck}
    \newcommand{\url}{\texttt}
}{
    \usepackage{hyperref}
    \hypersetup{colorlinks, linkcolor=darkblue, anchorcolor=darkblue, citecolor=darkblue, urlcolor=darkblue}
    \usepackage{url}
} %
\newcommand{\email}{\texttt}

\usepackage{eqlist}
\usepackage{enumitem}
\setlist[itemize]{leftmargin=*}
\setlist[enumerate]{leftmargin=*,label=\normalfont{(\alph*)}}

\usepackage[section]{algorithm}
\usepackage{algpseudocode,algorithmicx}

\algrenewcommand\algorithmicrequire{\textbf{Input:}}
\algrenewcommand\algorithmicensure{\textbf{Output:}}
\algrenewcommand\alglinenumber[1]{\normalsize #1.}

\newtheorem{theorem}{Theorem}[section]

\newtheorem{corollary}{Corollary}[section]

\newtheorem{lemma}{Lemma}[section]

\newtheorem{proposition}{Proposition}[section]

\newtheorem*{blanketassumption}{Blanket Assumption}
\newtheorem{example}{Example}[section]

\newtheorem{remark}{Remark}[section]
\theoremstyle{definition}
\newtheorem{definition}{Definition}[section]
\usepackage{xpatch}
\xpatchcmd{\proof}{\itshape}{\normalfont\proofnamefont}{}{}
\newcommand{\proofnamefont}{\bfseries}

\numberwithin{equation}{section}

\usepackage{microtype}
\usepackage[nobottomtitles*]{titlesec} %
\mathcode`@="8000{\catcode`\@=\active\gdef@{\mkern1mu}} %

\usepackage{relsize}
\usepackage{nccmath}

\DeclareMathOperator{\gap}{gap}
\DeclareMathOperator{\clarke}{\partial_{\sss{\textnormal{C}\!}}}

\newcommand{\ind}{\mathbbm{1}}

\newcommand{\RR}{\mathbb{R}}
\newcommand{\BB}{\mathcal{B}}

\newcommand{\TT}{\mathcal{T}}

\newcommand{\sol}{\mathcal{S}}

\newcommand{\FF}{\mathcal{F}}

\newcommand{\sss}[1]{{\scriptscriptstyle{#1}}}

\newcommand{\comp}{{\text{c}}}
\newcommand{\trs}{{\scriptstyle{\mathsf{T}}}}

\newcommand{\Mid}{\mathrel{\big|}}
\newcommand{\MID}{\mathrel{\bigg|}}
\newcommand{\mystar}{\textcolor{red}{\boldsymbol{\star}}}
\newcommand{\mycirc}{\textcolor{blue}{\boldsymbol{\circ}}}
\newcommand{\mydot}{\boldsymbol{\cdot}}

\newcommand{\ie}{{i.e.}\xspace}
\newcommand{\eg}{{e.g.}\xspace}
\newcommand{\etal}{{et al.}\xspace}
\newcommand{\iid}{\text{i.i.d.}\xspace}
\newcommand{\as}{\text{a.s.}\xspace}
\newcommand{\io}{\text{i.o.}\xspace}

\newcommand{\me}{\mathrm{e}}

\xspaceaddexceptions{]\}}

\DeclareMathAlphabet{\mathsfit}{T1}{\sfdefault}{\mddefault}{\sldefault}
\SetMathAlphabet{\mathsfit}{bold}{T1}{\sfdefault}{\bfdefault}{\sldefault}

\newcommand{\GG}{\mathcal{G}}
\renewcommand{\Pr}{\mathbb{P}}
\newcommand{\expc}{\mathbb{E}}
\newcommand{\cm}{\operatorname{cm}}
\newcommand{\MD}{\mathcal{D}}
\newcommand{\DD}{\mathfrak{D}}
\newcommand{\ddd}{\mathfrak{d}}

\DeclareFontFamily{U}{dutchcal}{\skewchar\font=45 }
\DeclareFontShape{U}{dutchcal}{m}{n}{<-> s*[1.0] dutchcal-r}{}
\DeclareFontShape{U}{dutchcal}{b}{n}{<-> s*[1.0] dutchcal-b}{}
\DeclareMathAlphabet{\mathlcal}{U}{dutchcal}{m}{n}
\SetMathAlphabet{\mathlcal}{bold}{U}{dutchcal}{b}{n}

\usepackage[scr=boondoxupr]{mathalfa}

\DeclareMathAlphabet{\mathpzc}{OT1}{pzc}{m}{it} %

\title{Non-convergence Analysis of Probabilistic Direct Search
}

\date{\today}

\author{
Cunxin Huang\thanks{
Department of Applied Mathematics, The Hong Kong Polytechnic University,
Hong Kong, China. Email:~\email{cun-xin.huang@connect.polyu.hk}.
}\and Zaikun Zhang\thanks{
School of Mathematics, Sun Yat-sen University,
Guangzhou, China. Email:~\email{zhangzaikun@mail.sysu.edu.cn}.
}
}

\begin{document}

\maketitle

\begin{abstract}
We present a non-convergence theory for probabilistic direct search,
a randomized derivative-free optimization method, where non-convergence means
the failure to produce iterates that achieve stationarity asymptotically.
The motivation is to
understand whether the submartingale-like assumption in the existing convergence theory is essential or
merely an artifact of the analysis techniques.
For convex objectives, we prove that
the probability of non-convergence is positive,
provided that the polling directions satisfy a probabilistic ascent condition that is essentially the opposite
of the submartingale-like convergence condition. Furthermore, we establish a lower bound for the non-convergence probability.
For the typical implementation of this method, where each iteration draws a fixed number of random
polling directions independently and uniformly from the unit sphere,
our theory implies that the method is not globally convergent
if the number of directions is below the threshold specified in the convergence theory,
and the submartingale-like assumption is confirmed to be essential for convergence.
Our theory is obtained by examining two random series that control the distance from any iterate
to the starting point and estimating the probability for these series to stay below certain bounds.
\end{abstract}

\textbf{Keywords}: Derivative-free optimization, Direct search,
Submartingale-like assumption,
Non-convergence analysis,
Randomized methods

\section{Introduction}
\label{sec:introduction}

When does your algorithm fail to converge? This question is arguably as fundamental as asking when it
converges. A theoretical investigation of this issue can potentially deepen our understanding of the
algorithm, inform its practical implementation, and, in particular, provide a framework for
assessing whether the assumptions in existing convergence theory are genuine necessities or merely
technical artifacts. However, this question has received far less attention than its convergence
counterpart.

We address this issue for a randomized derivative-free optimization~(DFO) algorithm
known as probabilistic direct search~(PDS)~\cite{Gratton_Royer_Vicente_Zhang_2015,
Gratton_Royer_Vicente_Zhang_2019}.
Our answer is provided in the form of a \emph{non-convergence analysis} of~PDS.
We will focus on the unconstrained optimization problem
\begin{equation}\label{eq:uncopt}
    \min_{x\in \RR^n} f(x),
\end{equation}
where~$f\mathrel{:} \RR^n\to\RR$~is the objective function.

Direct search~\cite{Kolda_Lewis_Torczon_2003,Dzahini_Rinaldi_Royer_Zeffiro_2025}
is a class of DFO methods that define iterates based on comparisons of function values
sampled following a certain scheme without building models for the objective.
We focus on directional direct search methods
requiring sufficient decrease~\cite[Section~7.7]{Conn_Scheinberg_Vicente_2009b}.
The deterministic variant of these methods evaluates at least~$n+1$
function values per iteration in the worst case, which is prohibitively expensive even for modestly big~$n$.

To alleviate the computational burden per iteration, Gratton \etal~\cite{Gratton_Royer_Vicente_Zhang_2015}
propose PDS, which searches along a set of directions randomly generated at each iteration.
In the typical implementation of PDS, each iteration draws~$m$~\iid~random polling directions
uniformly from the unit sphere
with no dependence on existing polling directions or iterates.
PDS is shown to converge globally if
the random directions form a sequence of~$p_0$-probabilistic~$\kappa$-descent
sets~\mbox{\cite[Definition~3.1]{Gratton_Royer_Vicente_Zhang_2015}}
with~$\kappa>0$ and
\begin{equation}
  \label{eq:p0}
  p_0 \;=\; \frac{\log \theta}{\log (\gamma^{-1}\theta)},
\end{equation}
where~$\theta \in (0, 1)$ and~$\gamma \ge 1$ are parameters that the algorithm uses to update the step size.
In particular, the above-mentioned typical implementation of PDS converges
globally if~$\gamma > 1$ and
\begin{equation}
    \label{eq:m_con}
    m \;>\; \log_2 \left( 1 - \frac{\log \theta}{\log \gamma} \right),
\end{equation}
where the right-hand side is independent of problem~\eqref{eq:uncopt}, especially its
dimension~$n$.

A natural question then arises: what if~\eqref{eq:m_con} is not satisfied?
More fundamentally, is the~$p_0$-probabilistic~$\kappa$-descent assumption essential for the
convergence of PDS or is it only a technical artifact in the existing convergence analysis?
Establishing a theory (rather than an example) to answer these questions is the main goal of our non-convergence analysis.

However, the motivation for our investigation is not limited to PDS. The probabilistic
descent assumption is a representative of the \emph{submartingale-like assumptions}
first proposed in~\cite{Bandeira_Scheinberg_Vicente_2014} and now widely used in
the convergence theory of randomized optimization algorithms, including probabilistic
trust region~\cite{Bandeira_Scheinberg_Vicente_2014,Gratton_Royer_Vicente_Zhang_2015,Wang_Yuan_2022},
line search~\cite{Cartis_Scheinberg_2018,Berahas_Cao_Scheinberg_2021}, cubic
regularization~\cite{Cartis_Scheinberg_2018}, and subspace methods~\cite{Cartis_Roberts_2022,
Roberts_Royer_2023}.
We hope that our work will not only shed light on the necessities of
such assumptions, but also
provide inspiration and tools for the non-convergence analysis of other randomized methods,
thus contributing to a deeper understanding of these methods.

The main discoveries of our non-convergence analysis are as follows.
If the polling direction sets of PDS form a sequence of~$p$-probabilistic ascent
sets~(Definition~\ref{def:prob-ascent}) with~$p > p_*$, where
\begin{equation}\label{eq:define-p_*}
    p_* \;=\; 1-p_0 \;=\; \frac{\log \gamma}{\log (\theta^{-1}\gamma)},
\end{equation}
then PDS does not converge globally on convex objectives~(Theorem~\ref{thm:qualitative}).
For the aforementioned typical implementation of PDS, the non-convergence condition reduces to
\begin{equation}
    \nonumber
    m \;<\; \log_2 \left(1 - \frac{\log \theta}{\log \gamma}\right),
\end{equation}
a requirement opposite to the convergence condition~\eqref{eq:m_con} except for the boundary situation
with $m = \log_2(1-\log\theta / \log \gamma)$, which cannot be covered by the existing convergence
analysis~\cite{Gratton_Royer_Vicente_Zhang_2015} or our theory.
As a highlight, we not only prove that PDS fails to converge
globally under the above-mentioned conditions, but also establish a lower bound on the non-convergence
probability~(Theorem~\ref{thm:quantitative}), which appears to be sharp in numerical experiments.

We must stress that the word ``non-convergence'' in our paper does not mean divergence.
Here, non-convergence signifies that the iterates do not achieve any kind of stationarity asymptotically,
but they may converge to a non-stationary point~(see Remark~\ref{rem:non_conv}).
Indeed, non-convergence investigations are not uncommon in optimization research.
For instance,
BFGS~\cite{Dai_2013},
ADMM~\cite{Chen_He_Ye_Yuan_2016},
and Adam~\cite{Zhang_Chen_Shi_Sun_Luo_2022} are all shown to be non-convergent
under certain conditions.
Particularly,
Audet~\cite{Audet_2004} exhibits non-convergence examples of
Generalized Pattern Search~(GPS), a classic direct search method, thereby
confirming that the conditions imposed in its convergence theory are all indispensable.
Other papers on non-convergence include~\cite{Powell_1973,Thompson_1977,Yuan_1998,BenGharbia_Gilbert_2012,
Mascarenhas_2014,Ramponi_2018,Hong_Zeng_Zhang_Sun_2022}.
Most of these works, however, focus on constructing \emph{non-convergence examples}, whereas our paper aims
to establish a systematic \emph{non-convergence theory}.
Examples are valuable, but isolated examples, especially pathological ones, may fail to reflect
typical behavior. A theory, by contrast, can often provide a broader picture and reveal deeper
mechanisms.

The remaining sections are organized as follows. In Section~\ref{sec:preliminary}, we
provide a concise review of DFO and introduce preliminary concepts about PDS.
Section~\ref{sec:nonconvergence} contains the major results of this paper.
It establishes the non-convergence theory and shows that
the typical implementation of PDS is not globally convergent if the number of polling directions is less
than~$\log_2 (1 - \log \theta / \log \gamma)$, with the non-convergence probability quantified.
We extend our theory to the nonsmooth case in~Section~\ref{sec:nonsmooth} and
conclude with some perspectives in~Section~\ref{sec:conclusion}. The appendices contain some lemmas,
proofs, and discussions, all of which can be skipped without affecting the understanding of the main
theory.

\textbf{Notations.}~For an event~$E$, we use~$\ind(E)$ to denote the random variable such that
\begin{equation*}
    \ind(E) \;=\;
    \begin{cases}
        1, & \text{if $E$ happens},\\
        0, & \text{otherwise}.
    \end{cases}
\end{equation*}
The abbreviation~``\as'' stands for ``almost surely'' and~``\io'' for~``infinitely often''.
The Euclidean norm is denoted by~$\|\cdot\|$,
and~$\BB(x, r)$ represents the open Euclidean ball centered at~$x\in\RR^n$ with radius~$r> 0$.
For the objective function~$f$ of problem~\eqref{eq:uncopt}, we denote
\begin{align*}
    \inf f &\;=\; \inf_{x\in\RR^n} f(x), \\
    \sol(f) &\;=\; \{x\in\RR^n\mathrel{:} f(x) = \inf f\}.
\end{align*}
Note that~$\inf f$ may be~$-\infty$ and~$\sol(f)$ may be empty.
As in~\cite[page~113]{Rockafellar_Wets_1998}, we define the gap distance between two
sets~$A, B \subseteq \RR^n$ as
\begin{equation*}
    \gap(A, B) = \inf \{\|a - b\|\mathrel{:} a\in A, b\in B \},
\end{equation*}
which is supposed to be~$\infty$ if~$A = \emptyset$ or~$B = \emptyset$; if~$A$ is a singleton~$\{a\}$, then we write~$\gap(a, B)$ instead of~$\gap(\{a\}, B)$.
As a convention, we define the summation and product over an empty index set as~$0$ and~$1$,
respectively. In particular, $\sum_{k=i}^j a_k= 0$ and~$\prod_{k=i}^j a_k=1$ for any real
sequence~$\{a_k\}$ whenever~$i > j$.
We write~$\lim_k$ instead of~$\lim_{k\to\infty}$ in inline equations for brevity, which
applies to lower and upper limits as well.

\section{Preliminaries}\label{sec:preliminary}

Within the existing literature, DFO methods are broadly classified into two primary
categories: model-based methods and direct search methods~\cite{Conn_Scheinberg_Vicente_2009b,
Audet_Hare_2017}.
Model-based methods construct local models of the problem based on function values
and exploit such models under a trust region~\cite{Conn_Scheinberg_Vicente_2009a} or
line search~\cite{Berahas_Byrd_Nocedal_2019} framework.
A wealth of classical literature on model-based methods can be found in
\cite{Powell_1994,Powell_2002a,Powell_2006,Powell_2009,Conn_Scheinberg_Vicente_2009a,Berahas_Byrd_Nocedal_2019},
to name but a few.
Direct search methods do not explicitly build models for the problem,
but instead, they define iterates based on comparisons of function values sampled following
a certain scheme~\cite{Kolda_Lewis_Torczon_2003,Dzahini_Rinaldi_Royer_Zeffiro_2025}.
There exist multiple types of direct search methods, examples
including the Nelder--Mead simplex method~\cite{Nelder_Mead_1965},
GPS~\cite{Torczon_1997,Lewis_Torczon_1999, Audet_Dennis_2002},
the MADS family~\cite{Audet_Dennis_2006,LeDigabel_2011,Audet_LeDigabel_Montplaisir_Tribes_2022},
and BFO~\cite{Porcelli_Toint_2017,Porcelli_Toint_2020b}.
More extensive surveys on DFO can be found
in the monographs~\cite{Conn_Scheinberg_Vicente_2009b,Audet_Hare_2017},
the review papers~\cite{Rios_Sahinidis_2013,Custodio_Scheinberg_Vicente_2017,Larson_Menickelly_Wild_2019},
and the references therein.

Probabilistic techniques have been introduced to both categories of methods in the past
decade~\cite{Bandeira_Scheinberg_Vicente_2014, Gratton_Royer_Vicente_Zhang_2015,
Cartis_Scheinberg_2018,Gratton_Royer_Vicente_Zhang_2018,Gratton_Royer_Vicente_Zhang_2019,
Cartis_Roberts_2022,Roberts_Royer_2023},
PDS being a result in the direct search category.
In what follows, we first present a basic framework of direct search
adopted from~\cite[Section~7.7]{Conn_Scheinberg_Vicente_2009b} and~\cite{Vicente_2013},
and then introduce the probabilistic variant of this framework proposed
in~\cite{Gratton_Royer_Vicente_Zhang_2015} for PDS, around which our investigation will revolve.

\subsection{Direct search based on sufficient decrease}\label{ssec:ds_sufficient_decrease}

Algorithm~\ref{alg:direct_search} presents a direct search method for solving
problem~\eqref{eq:uncopt}. Inequality~\eqref{eq:sufficient_decrease} is the sufficient decrease condition,
where the forcing function~$\rho\mathrel{:}(0,\infty)\to(0,\infty)$~is nondecreasing
and~$\rho (\alpha) = o(\alpha)$~when~$\alpha\to 0^+$, a typical choice being~$\rho (\alpha) = c\alpha^2/2$ with a positive constant~$c$.

\begin{algorithm}[htbp!]
  \caption{\label{alg:direct_search}Deterministic direct search based on sufficient decrease}
  Select~$x_0\in \RR^n$,~$\alpha_0>0$,~$\theta\in (0,1)$,~$\gamma\in [1,\infty)$, and a forcing function~$\rho$.
  \\For~$k = 0, 1, 2, \dots$, do the following.
  \begin{algorithmic}[1]
    \State Generate a set of directions~$\MD_k\subseteq \RR^{n}$ deterministically.
    \State If there exists a direction~$d_k\in \MD_k$ such that
    \begin{equation}\label{eq:sufficient_decrease}
        f(x_k) - f(x_k + \alpha_k d_k) \;>\; \rho(\alpha_k),
    \end{equation}
    then set~$x_{k+1} = x_k + \alpha_k d_k$ and~$\alpha_{k+1} = \gamma \alpha_k$; otherwise,
    set~$x_{k+1} = x_k$ and~$\alpha_{k+1} = \theta \alpha_k$.
  \end{algorithmic}
\end{algorithm}

Step~2 of Algorithm~\ref{alg:direct_search} is known as
\emph{polling}~\cite[Chapter~7]{Conn_Scheinberg_Vicente_2009b}, and the directions
in~$\MD_k$ are called the \emph{polling directions}.
In practice, a search step may be taken at the beginning of each
iteration~(see~\cite[Algorithm~3.2]{Kolda_Lewis_Torczon_2003}).
As in~\cite{Gratton_Royer_Vicente_Zhang_2015}, we omit such an option and focus on polling.

Algorithm~\ref{alg:direct_search} represents only one of the many types of direct search.
Instead of the sufficient decrease condition~\eqref{eq:sufficient_decrease},
there are frameworks that adopt the simple decrease condition
\begin{equation}
 \label{eq:simple_decrease}
    f(x_k) - f(x_k + \alpha_k d_k) \;>\; 0
\end{equation}
and guarantee global convergence by integrality and rationality requirements on the polling directions and
the step sizes~\cite{Torczon_1997,Audet_Dennis_2002,Audet_2004}~\textnormal{(}see
also~\cite[Section~7.6]{Conn_Scheinberg_Vicente_2009b}\textnormal{)}.

To implement Algorithm~\ref{alg:direct_search}, a polling strategy is needed to select the
direction~$d_k$ if there are multiple candidates satisfying~\eqref{eq:sufficient_decrease}.
Two common strategies exist: complete polling chooses the direction that decreases the function value
the most, picking the first in case of a tie; opportunistic polling takes the first direction
fulfilling~\eqref{eq:sufficient_decrease}.
We also need to set an order for evaluating~$\{f(x_k+\alpha_k d)\mathrel{:} d\in\MD_k\}$.
A strategy suggested in~\cite[Section~4]{Custodio_Vicente_2007} decides the order by an oracle that
can help us rank the prospective decreases of~$f$ along the polling directions, the oracle
in~\cite{Custodio_Vicente_2007} being a direction of potential descent.
For generality, Algorithm~\ref{alg:direct_search} deliberately keeps the strategies of polling and ordering unspecified.

The analysis of Algorithm~\ref{alg:direct_search} depends on the concept of the cosine measure defined below.
\begin{definition}[Cosine measure]
    \label{def:cm}
    Let~$\MD$~be a finite and nonempty set of nonzero vectors in~$\RR^n$. The cosine measure of~$\MD$
    with respect to a nonzero vector~$v$, denoted by~$\cm(\MD,v)$, is defined~as
    \begin{equation*}
      \cm(\MD,v) \;=\; \max_{d \in \MD} \frac{d^{\trs} v}{\|d\| \|v\|}.
    \end{equation*}
    The cosine measure of~$\MD$, denoted by~$\cm(\MD)$, is defined
    as~$\cm(\MD) = \min _{v \in \mathbb{R}^n \backslash\{0\}} \cm(\MD,v)$.
\end{definition}
\begin{remark}\label{rem:def_cm}
    Definition~\ref{def:cm} does not specify the value of~$\cm(\,\cdot\,, 0)$.
    As a convention, we suppose that it is defined to be a constant in~$[-1,1]$.
    For example,~$\cm(\,\cdot\,, 0) = 1$ in~\cite{Gratton_Royer_Vicente_Zhang_2015}.
    We choose not to particularize this constant, because its value will not affect our non-convergence analysis.
    See Remark~\ref{rem:cm_not_affect} for more details.
\end{remark}

If~$f$ is smooth and there exists a constant~$\kappa>0$~such that~$\cm(\MD_k) \ge \kappa$~for each~$k\ge 0$, then Algorithm~\ref{alg:direct_search} converges under some technical assumptions. See~\cite[Theorem~3.11]{Kolda_Lewis_Torczon_2003}.

\subsection{Probabilistic direct search}\label{ssec:prob_ds}

Algorithm~\ref{alg:direct_search_r} presents the PDS method
proposed in~\cite{Gratton_Royer_Vicente_Zhang_2015}. It is the same as
Algorithm~\ref{alg:direct_search} except that the polling directions in Step~1 are random vectors
over a probability space~$(\Omega, \FF, \Pr)$. Consequently, the iterates and the step sizes are
also random, although the starting point and the initial step size are still chosen deterministically.

\begin{algorithm}[htbp!]
    \caption{\label{alg:direct_search_r}Probabilistic direct search (PDS)}
  \vspace{0.4em}
  Identical to Algorithm~\ref{alg:direct_search} except that the polling directions in Step~1 are
  generated randomly.%
  \vspace{0.4em}
\end{algorithm}

For a clear discussion of Algorithm~\ref{alg:direct_search_r}, it is necessary to use different
notations for random elements and their realizations.
Similar to~\cite{Gratton_Royer_Vicente_Zhang_2015}, we adopt the notations summarized in Table~\ref{tab:notation}.
\begin{table}[htbp!]
  \centering
  \caption{Notations for random elements and their realizations at iteration~$k$}
  \label{tab:notation}
  \begin{tabular}{ccccc}
      \toprule
       &Polling direction set &Iterate      &Step size  & Gradient at the iterate \\
      \toprule
      Random element &$\DD_k$ &$X_k$      &$A_k$  & $G_k$\\
      Realization     &$\MD_k$ &$x_k$ &$\alpha_k$  & $g_k$\\
      \bottomrule
  \end{tabular}
\end{table}

We make the following blanket assumption on the sequence of polling direction sets~$\{\DD_k\}$ to
simplify our presentation, although our analysis remains valid after slight modifications if the
lengths of the polling directions are only uniformly bounded.
\begin{blanketassumption}\label{as:length}
For each~$k\ge 0$, the set~$\DD_k$ is nonempty and consists of finitely many unit random vectors.
\end{blanketassumption}

The study of Algorithm~\ref{alg:direct_search_r} heavily relies on the concept of~$\sigma$-algebras and conditional probability with respect to them~\cite[Section~4.1]{Durrett_2019}.
For each~$k\ge 0$, we define
\begin{equation}
\label{eq:Fk}
\FF_k \;=\; \sigma (\DD_0,\,X_1,\, \dots,\,\DD_k,\,X_{k+1}),
\end{equation}
which is the~$\sigma$-algebra generated by~$\DD_0$, $X_1$, \dots, $\DD_k$, $X_{k+1}$.
Note that~$\FF_k$ does not involve~$X_0$, which is deterministic.
In addition, we define
\[
    \FF_{-1} \;=\; \{\emptyset,\,\Omega\}.
\]
Roughly speaking, for~$k\ge 0$,~$\FF_k$ captures the information about the polling directions and iterates up to
the end of iteration~$k$, when~$X_{k+1}$ has been generated but $\DD_{k+1}$ has not.
Clearly,~$\DD_k$ is~{$\FF_k$-measurable} and~$X_k$ is~\mbox{$\FF_{k-1}$-measurable}. %
In addition,~$G_k$ is~\mbox{$\FF_{k-1}$-measurable} if~$f$ is continuously differentiable,
and~$A_k$ is~$\FF_{k-1}$-measurable by mathematical induction based on the recurrence
\begin{equation}
    \label{eq:Ak_recur}
    A_{k+1} \;=\; \gamma^{\ind(X_{k+1} \ne X_k)} \theta^{\ind(X_{k+1} = X_k)} A_k,
\end{equation}
which holds because~$\{A_{k+1} = \gamma A_k\} = \{X_{k+1} \ne X_k\}$ and~$\{A_{k+1} = \theta A_k\}
= \{X_{k+1} = X_k\}$.
In the language of probability theory, $\{\DD_k\}$ is adapted to~$\{\FF_k\}$,
while~$\{X_k\}$,~$\{G_k\}$, and~$\{A_k\}$ are predictable with respect
to~$\{\FF_k\}$~(see~\cite[Section~4.1]{Durrett_2019}).

\subsection{Global convergence of PDS}\label{ssec:conv_pds}

The global convergence theory~\cite{Gratton_Royer_Vicente_Zhang_2015} of Algorithm~\ref{alg:direct_search_r} is summarized below for later
reference.

\begin{definition}[{\cite[Definition 3.1]{Gratton_Royer_Vicente_Zhang_2015}}]\label{def:p-k_descent}
    Let~$p \in [0,1]$ and $\kappa \in [-1,1]$.
    Consider Algorithm~\ref{alg:direct_search_r} with~$f$ being continuously differentiable
    on~$\RR^n$. The sequence~$\{\DD_k\}$ is said to be a sequence of~$p$-probabilistic~$\kappa$-descent sets if
    \begin{equation}\label{eq:p-k_descent-def}
      \Pr\left(\cm\left(\DD_k,-G_k\right) \ge \kappa \mid \FF_{k-1}\right) \;\ge\; p \quad \text{for each~$k\ge 0$}.
    \end{equation}
\end{definition}

\begin{theorem}[{\cite[Theorem~3.4]{Gratton_Royer_Vicente_Zhang_2015}}]\label{thm:conv_pds}
  Consider Algorithm~\ref{alg:direct_search_r} with~$f$ being continuously differentiable and
  bounded below on~$\RR^n$, and~$\nabla f$ being Lipschitz continuous on~$\RR^n$.
  If~$\{\DD_k\}$~is a sequence
  of~$p_0$-probabilistic~$\kappa$-descent sets with~$p_0$ being defined in~\eqref{eq:p0}
  and~$\kappa$ being a positive constant,
  then~$\Pr(\liminf_{k}\|G_k\|=0) = 1$.
\end{theorem}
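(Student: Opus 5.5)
The plan is to prove the statement by contradiction, in the standard way for probabilistic direct search. Argue on the event $E_\epsilon=\{\liminf_k\|G_k\|>\epsilon\}$ for a fixed $\epsilon>0$, and show $\Pr(E_\epsilon)=0$. Taking the union over rational $\epsilon$ then gives $\Pr(\liminf_k\|G_k\|=0)=1$.

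\textbf{Step 1: the step sizes vanish.} This step is deterministic, for every realization. Each successful iteration decreases $f$ by more than $\rho(\alpha_k)$, and $f$ is bounded below, so $\sum_{k\text{ successful}}\rho(\alpha_k)<\infty$. If $\gamma>1$ and there were infinitely many successful iterations with $\alpha_k$ not tending to~$0$, this sum would diverge, because $\rho$ is positive and nondecreasing. If there are only finitely many successes, $\alpha_k$ shrinks geometrically by~$\theta$. Combining the two cases, with the observation that between successes $\alpha_k$ only decreases, we get $\alpha_k\to0$. Write $s_k$ for the number of successful iterations among $0,\dots,k-1$. Since $\alpha_k=\alpha_0\gamma^{s_k}\theta^{k-s_k}$, we have $\log\alpha_k\to-\infty$, that is,
\begin{equation*}
s_k\log(\gamma\theta^{-1})+k\log\theta\;\to\;-\infty .
\end{equation*}
Dividing by $\log(\gamma\theta^{-1})$ and using~\eqref{eq:p0}, this says $s_k-p_0k\to-\infty$.

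\textbf{Step 2: good iterations succeed when $\alpha_k$ is small relative to $\|g_k\|$.} Call iteration~$k$ good if $\cm(\MD_k,-g_k)\ge\kappa$. Let $L$ be the Lipschitz constant of~$\nabla f$ and pick $d\in\MD_k$ attaining the cosine measure. Then
\begin{equation*}
f(x_k)-f(x_k+\alpha_kd)\;\ge\;\kappa\alpha_k\|g_k\|-\tfrac{L}{2}\alpha_k^2 .
\end{equation*}
Because $\rho(\alpha)=o(\alpha)$, there is $\bar\alpha(\epsilon)>0$ such that the right-hand side exceeds $\rho(\alpha_k)$ whenever $\|g_k\|\ge\epsilon$ and $\alpha_k<\bar\alpha(\epsilon)$. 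So on $E_\epsilon$, by Step~1, every good iteration after some random index $K$ is successful. Let $Z_k=\ind(\cm(\DD_k,-G_k)\ge\kappa)$. Then on $E_\epsilon$ we have $\sum_{l<k}Z_l\le s_k+K$, and Step~1 gives
\begin{equation*}
\sum_{l<k}(Z_l-p_0)\;\to\;-\infty \quad\text{on } E_\epsilon .
\end{equation*}

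\textbf{Step 3: the martingale contradiction.} Define $M_k=\sum_{l<k}\bigl(Z_l-\Pr(Z_l=1\mid\FF_{l-1})\bigr)$. This is an $\{\FF_k\}$-martingale with increments bounded by~$1$. By the probabilistic descent assumption~\eqref{eq:p-k_descent-def}, each term $\Pr(Z_l=1\mid\FF_{l-1})$ is at least $p_0$, so
\begin{equation*}
\sum_{l<k}(Z_l-p_0)\;\ge\; M_k .
\end{equation*}
Hence $M_k\to-\infty$ on $E_\epsilon$. For martingales with bounded increments, almost surely either $M_k$ converges to a finite limit, or $\limsup_kM_k=+\infty$ and $\liminf_kM_k=-\infty$ (Durrett, Theorem~4.3.1). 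Neither alternative is compatible with $M_k\to-\infty$, so $\Pr(E_\epsilon)=0$.

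\textbf{Main obstacle.} The delicate point is the boundary case $p=p_0$ exactly. A merely bounded-above drift would not give a contradiction. The proof needs the strict divergence $\log\alpha_k\to-\infty$ from Step~1, which turns the step-size bookkeeping into $\sum(Z_l-p_0)\to-\infty$ rather than just $\sup_k\sum(Z_l-p_0)<\infty$. The dichotomy theorem then excludes exactly this. A secondary technicality is the random index~$K$. I would handle it by intersecting with the events $\{K\le k_0\}$ and letting $k_0\to\infty$, noting that a finite shift does not affect divergence to $-\infty$.
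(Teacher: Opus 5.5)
Your proof is correct and is essentially the Gratton--Royer--Vicente--Zhang argument that the paper defers to. It has the same three ingredients: $\alpha_k\to0$ for every realization; the identity $\gamma^{p_0}\theta^{1-p_0}=1$, which turns $\log\alpha_k$ into the walk $\sum_{l<k}(Z_l-p_0)$ once good iterations are forced to succeed; and the dichotomy for (sub)martingales with bounded increments. The only difference is cosmetic: you apply the dichotomy to the martingale part $M_k$ to rule out $M_k\to-\infty$, instead of first showing that the submartingale $\sum_{l\le k}(Z_l/p_0-1)$ has $\limsup=+\infty$ almost surely.
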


\begin{remark}
    \label{rem:as}
    The probability in~\eqref{eq:p-k_descent-def} is a conditional probability with respect to
    a~$\sigma$-algebra.
    It is a random variable and is only defined up to almost sure
    equivalence~\textnormal{(}\eg,~\cite[Remark~1 on page~213]{Shiryaev_1996}
    and~\cite[Theorem~8.12]{Klenke_2020}\textnormal{)}.
    Consequently, following the convention in probability
    theory~\textnormal{(}\eg,~\cite[page~179]{Durrett_2019} and~\cite[page~195]{Klenke_2020}\textnormal{)},
    the inequality in Definition~\ref{def:p-k_descent} should be understood in the almost sure sense.
    Henceforth, all the equalities and inequalities
    should be interpreted in this way if they involve conditional probabilities or expectations with
    respect to a~$\sigma$-algebra \textnormal{(}\eg, condition~\eqref{eq:prob-asc-def} in
    Definition~\ref{def:prob-ascent}\textnormal{)}, and we will not repeat this point every time.
\end{remark}

\begin{remark}
The~$\sigma$-algebra~$\FF_{k}$ defined in~\eqref{eq:Fk}
reduces to~$\FF_{k}^{\DD}=\sigma(\DD_0, \dots, \DD_{k})$
if we assume that~$X_{\ell}$ is measurable with respect to~$\FF_{\ell-1}^{\DD}$ for each~$\ell\ge 1$.
As will be clarified in Lemma~\ref{lem:measurable-polling},
this assumption is fulfilled by the implementations of Algorithm~\ref{alg:direct_search_r} considered
in~\cite{Gratton_Royer_Vicente_Zhang_2015},
but it may fail if we allow the unspecified polling strategy in
the algorithm to involve randomness beyond the polling directions
\textnormal{(}Example~\ref{exp:measurable-polling}\textnormal{)}. Hence, we choose not to
impose such an assumption.
In this sense, Theorem~\ref{thm:conv_pds} is a slight generalization
of~\cite[Theorem~3.4]{Gratton_Royer_Vicente_Zhang_2015}, although the proof is essentially the same.%
\end{remark}

Corollary~\ref{cor:conv_pds} is the specialization of Theorem~\ref{thm:conv_pds} for the typical
implementation of PDS mentioned in Section~\ref{sec:introduction}.

\begin{corollary}[{\cite[Corollary B.4]{Gratton_Royer_Vicente_Zhang_2015}}]
    \label{cor:conv_pds}
    Consider Algorithm~\ref{alg:direct_search_r} with~$f$ satisfying the assumptions in
    Theorem~\ref{thm:conv_pds}.
    Let each~$\DD_k$ be a set of~$m$~\iid~random
    vectors uniformly distributed on the unit sphere in~$\RR^n$
    with no dependence on existing polling directions or iterates.
    If~$\gamma >1$ and~$m > \log_2(1-{\log \theta}/{\log \gamma})$,
    then~$\Pr(\liminf_{k}\|G_k\|=0) = 1$.
\end{corollary}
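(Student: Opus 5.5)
We will derive Corollary~\ref{cor:conv_pds} from Theorem~\ref{thm:conv_pds}. The task is to show that, under the stated sampling scheme, $\{\DD_k\}$ is a sequence of $p_0$-probabilistic $\kappa$-descent sets for some constant $\kappa>0$. Then $\Pr(\liminf_k\|G_k\|=0)=1$ follows immediately.

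\textbf{Step 1: reduce the conditional probability to a fixed vector.} Because $\DD_k$ is independent of the past, and $G_k$ is $\FF_{k-1}$-measurable, we can freeze $G_k$. Let
\[
\varphi(v)=\Pr\bigl(\cm(\DD_k,-v)\ge\kappa\bigr)
\]
for a deterministic $v$. The standard freezing lemma (conditioning on a $\sigma$-algebra with respect to which one argument is measurable and the other independent) gives
\[
\Pr\bigl(\cm(\DD_k,-G_k)\ge\kappa\mid\FF_{k-1}\bigr)=\varphi(G_k)\quad\text{a.s.}
\]
One subtlety: $\DD_k$ must be independent of $\FF_{k-1}$. The hypothesis says $\DD_k$ has no dependence on existing polling directions or iterates, so this is exactly independence from $\sigma(\DD_0,X_1,\dots,\DD_{k-1},X_k)$. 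By rotational invariance of the uniform distribution, $\varphi(v)$ does not depend on $v$ when $v\ne0$. When $v=0$, the convention of Remark~\ref{rem:def_cm} with $\cm(\cdot,0)=1$, as in \cite{Gratton_Royer_Vicente_Zhang_2015}, makes $\varphi(0)=1$. So it suffices to treat a fixed unit vector $u$.

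\textbf{Step 2: compute the probability for a fixed unit vector.} For a single uniform unit vector $d$, the quantity $t(\kappa)=\Pr(d^\trs u<\kappa)$ is continuous and nondecreasing in $\kappa$. By symmetry, $t(0)=1/2$. Independence of the $m$ directions then gives
\[
\Pr\bigl(\cm(\DD_k,u)\ge\kappa\bigr)=1-t(\kappa)^m,
\]
which tends to $1-2^{-m}$ as $\kappa\to0^+$.

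\textbf{Step 3: translate the condition on $m$.} We need $1-2^{-m}>p_0$, that is, $2^{-m}<1-p_0=\log\gamma/\log(\theta^{-1}\gamma)$. Equivalently,
\[
2^m>\frac{\log\gamma-\log\theta}{\log\gamma}=1-\frac{\log\theta}{\log\gamma},
\]
using $\gamma>1$, so that $\log\gamma>0$. This is precisely the hypothesis on $m$.

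\textbf{Step 4: choose $\kappa$.} By the continuity in Step~2, we can pick a small constant $\kappa>0$, depending only on $n$ and $m$, such that $1-t(\kappa)^m\ge p_0$. Theorem~\ref{thm:conv_pds} then applies.

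The main point needing care is Step~1. It requires a rigorous justification of the independence of $\DD_k$ from $\FF_{k-1}$, together with the measurability of $\varphi$, so that the conditional probability equals $\varphi(G_k)$ almost surely. The algebra in Step~3 is routine.
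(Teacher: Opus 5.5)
Your proposal is correct and follows the route the paper itself uses in the proof of Theorem~\ref{thm:equiv}. That route shows that the uniform polling sets are $p_0$-probabilistic $\kappa$-descent for some small $\kappa>0$, where the requirement $1-2^{-m}>p_0$ is exactly the hypothesis on $m$, and then invokes Theorem~\ref{thm:conv_pds}. The differences are cosmetic: the cited source chooses an explicit $\kappa$ (of order $1/\sqrt{n}$) from a quantitative lower bound on $\Pr(\ddd^\trs v\ge\kappa)$ rather than your continuity argument, and your convention $\cm(\cdot,0)=1$ is precisely what makes Definition~\ref{def:p-k_descent} hold on $\{G_k=0\}$.
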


The global convergence rate of PDS can also be established under a probabilistic descent
assumption~\cite[Section~4]{Gratton_Royer_Vicente_Zhang_2015}, but it is not the focus of this paper.

\section{Probabilistic ascent and non-convergence analysis of PDS}\label{sec:nonconvergence}

How will Algorithm~\ref{alg:direct_search_r} behave if the polling direction sets~$\{\DD_k\}$
fail to satisfy the probabilistic descent condition in Theorem~\ref{thm:conv_pds}?
We now address this question by introducing the concept of probabilistic
ascent and developing the non-convergence theory of Algorithm~\ref{alg:direct_search_r} based
on it.

Our discussion begins with two motivating examples to build intuition. We then define probabilistic
ascent and prove a key lemma (Lemma~\ref{lem:X_k-to-U_k})
that establishes the framework for the subsequent analysis.
This framework yields two non-convergence
results for Algorithm~\ref{alg:direct_search_r}:
a basic one using Markov's inequality (Theorem~\ref{thm:loose}) and a refined characterization via
a conditional Chernoff bound~(Theorems~\ref{thm:qualitative} and~\ref{thm:quantitative}).
The latter constitutes the main technical contribution of the section and enables us to
show that the probabilistic descent assumption in Theorem~\ref{thm:conv_pds} is necessary for the
global convergence of the algorithm under some conditions~(Theorem~\ref{thm:equiv}).
We then revisit our motivating examples and close by proposing a condition weaker than
probabilistic ascent to broaden our theory.

\subsection{Motivating examples}\label{ssec:examples}

\subsubsection{Failure of global convergence: a numerical illustration}
\label{sssec:numerical_illust}

Before diving into the analysis, we conduct a simple test to illustrate the behavior of
Algorithm~\ref{alg:direct_search_r} when
the probabilistic descent condition in the convergence theory is not satisfied. We will focus
on the typical implementation of the algorithm specified in Corollary~\ref{cor:conv_pds}.

As a simple illustration, let the objective function be~$f(x) = x^\trs x$ with~$x\in\RR^2$. We set the
forcing function~$\rho(\alpha) = 10^{-3}\alpha^2$,
the starting point~$x_0 = (-10, 0)^\trs$, the initial step size~$\alpha_0=1$, the shrinking
factor~$\theta = 1/4$, and the expanding factor~$\gamma = 3/2$.
At each iteration, the polling direction set consists of~$m = 2$~random vectors
that are~\iid~and uniformly distributed on the unit circle with no dependence on existing polling directions or iterates.
Note that~$\log_2(1 - \log \theta / \log \gamma) \approx 2.14 > m$,
violating the condition in Corollary~\ref{cor:conv_pds} for convergence.
The polling strategy is complete polling.
The algorithm is terminated when the step size drops below the machine
epsilon~($\approx 2 \times 10^{-16}$) or the number of iterations reaches~$10^3$.

We run the algorithm for~$10^4$ times independently.
The results are shown in Figure~\ref{fig:nonconvergence}, where the circle represents the
starting point~$x_0$, the pentagram represents the global minimizer~$(0,0)^\trs$, and each dot represents the best
iterate~(\ie, the one with the lowest function value) obtained in a run of the algorithm.

\begin{figure}[htbp!]
  \centering
  \includegraphics[width=0.6\textwidth]{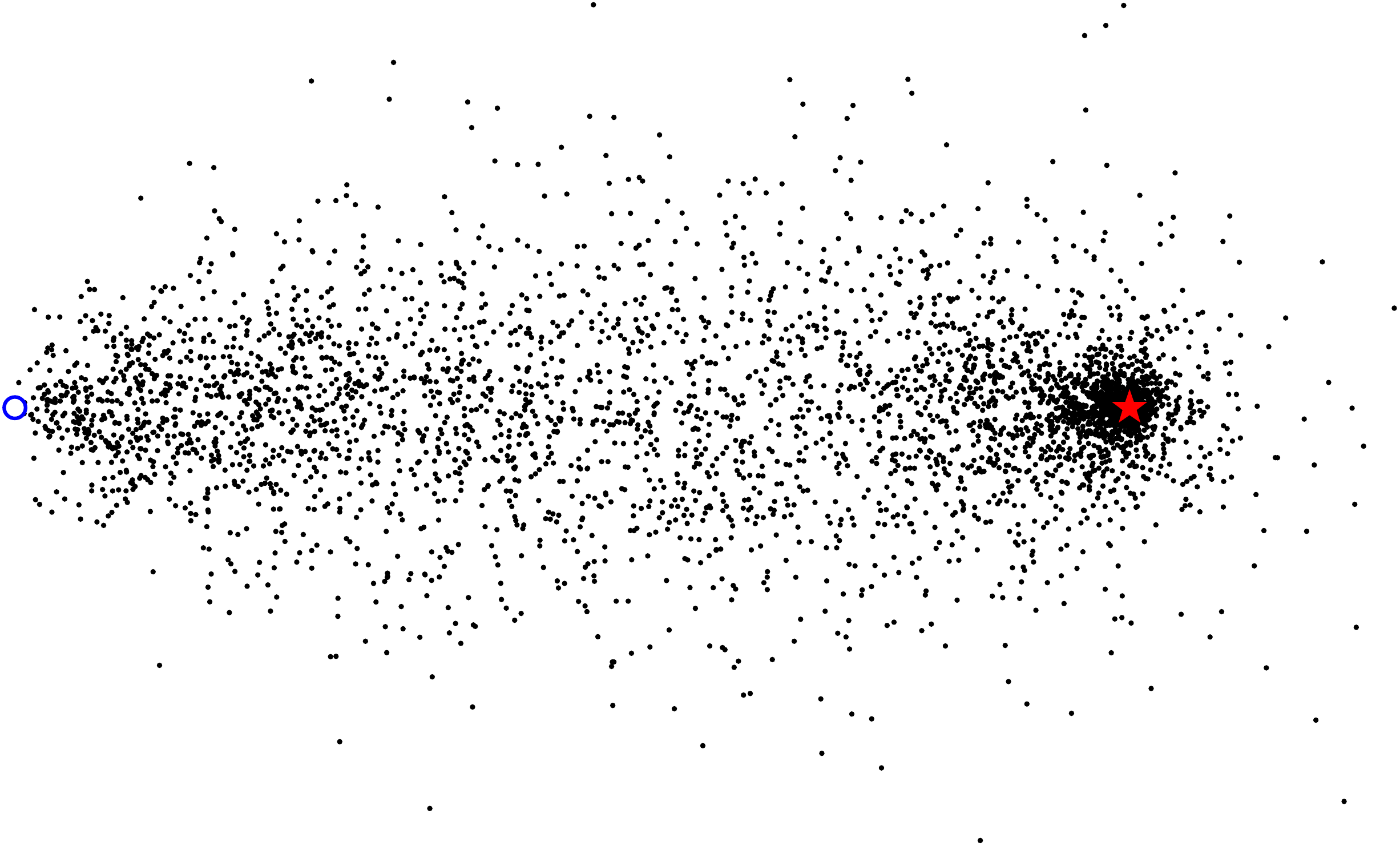}
  \begin{center}
      $\mycirc$~Starting point \qquad
      ${\mystar}$~Global minimizer \qquad
      $\mydot$~Best iterate of a run of the algorithm
  \end{center}
  \caption{A test illustrating failure of convergence of Algorithm~\ref{alg:direct_search_r}}
  \label{fig:nonconvergence}
\end{figure}

As we can observe, many of these dots are far away from the global minimizer.
Even though we cannot draw any rigorous conclusion about the asymptotic behavior
of Algorithm~\ref{alg:direct_search_r} based on this finite-time observation, it motivates us to conjecture that
the algorithm is not globally convergent under this setting. We will confirm this conjecture in
the subsequent analysis, and furthermore, estimate the probability that the iterates of
the algorithm stay away from the minimizer~(see~Corollary~\ref{cor:nonconv_pds}).
The estimation will be verified numerically by a refined version of the above experiment in
Subsection~\ref{sssec:empirical_rate}.

\subsubsection{A tiny example reflecting the big picture}
\label{ssec:small_example}

As a preview, Example~\ref{exp:simple_illustrate} presents a particularly
simple instance that illustrates our non-convergence theory,
highlighting the sharp dichotomy between the convergence and non-convergence conditions.
We will provide detailed explanations about this example
in Subsection~\ref{sssec:explain_example},
although the convergence part follows from~\cite{Gratton_Royer_Vicente_Zhang_2015}~(see
also Corollary~\ref{cor:conv_pds}).

\begin{example}\label{exp:simple_illustrate}
Let~$n = 1$, $f(x) = x^2$, and~$x_0$ be a nonzero number. Consider Algorithm~\ref{alg:direct_search_r}
with~$\DD_k = \{\ddd_k\}$,
where~$\ddd_k$~is a random variable independent of~$\FF_{k-1}$ and takes either~$1$ or~$-1$, each with probability~$1/2$.
Then~$\Pr(X_k \to 0) = 1$ {if and only if}~$\theta \gamma \ge 1$.
When~$\theta\gamma < 1$, there exist constants~$C >0$ and~$\bar{\zeta} >0$ such that
\begin{equation}
    \label{eq:simple_illustrate}
    \Pr\left( \inf_{k\ge 0}|X_k| \,\ge\, (1-\zeta)|x_0| \right)
    \;\ge\;
    C \zeta^{-\frac{\log 2}{\log \theta}}
    \quad\text{for}\;~\zeta \in (0, \overline{\zeta}).
\end{equation}
\end{example}

\subsection{Probabilistic ascent}\label{ssec:prob_ascent}

The concept of probabilistic ascent defined below will play a central role in our non-convergence
analysis, mirroring the role of probabilistic descent in the convergence theory.
\begin{definition}[$p$-probabilistic ascent]\label{def:prob-ascent}
    Let $p \in [0,1]$. Consider Algorithm~\ref{alg:direct_search_r} with~$f$ being continuously
    differentiable on~$\RR^n$. The sequence~$\{\DD_k\}$~is said to be a sequence
    of~$p$-probabilistic ascent sets if%
    \begin{equation}\label{eq:prob-asc-def}
      \Pr\left(\cm(\DD_k,-G_k) \le 0 \mid \FF_{k-1} \right) \;\ge\; p \ind\left(G_k \ne 0\right) \quad \text{for each~$k\ge 0$}.
    \end{equation}
\end{definition}

The event~$\{\cm(\DD_k,-G_k) \le 0\}$ means that $\DD_k$ does not contain any descent direction.
Condition~\eqref{eq:prob-asc-def} requires that the probability of this event given the past
is at least~$p$ whenever~$G_k \ne 0$.
One may ask whether we can omit the indicator~$\ind(G_k \ne 0)$ from
condition~\eqref{eq:prob-asc-def}.
Readers interested in this subtlety can refer to Appendix~\ref{app:disc_def}.

Proposition~\ref{prop:prob-asc} shows that the polling direction sets used by the typical
implementation of Algorithm~\ref{alg:direct_search_r} specified in Corollary~\ref{cor:conv_pds}
are probabilistic ascent.
The proof is given in Appendix~\ref{app:proofs}.

\begin{proposition}\label{prop:prob-asc}
  Consider Algorithm~\ref{alg:direct_search_r} with~$f$ being continuously differentiable on~$\RR^n$.
  The sequence~$\{\DD_k\}$ specified in Corollary~\ref{cor:conv_pds} is a sequence
  of~$p$-probabilistic ascent sets with~$p = 2^{-m}$.%
\end{proposition}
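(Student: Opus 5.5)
The plan is to verify condition~\eqref{eq:prob-asc-def} with $p=2^{-m}$ by conditioning on $\FF_{k-1}$ and exploiting the independence of $\DD_k$ from the past. When $G_k=0$ the right-hand side of~\eqref{eq:prob-asc-def} is zero, so the inequality holds trivially no matter how $\cm(\DD_k,0)$ is defined (Remark~\ref{rem:def_cm}). Hence everything reduces to the event $\{G_k\ne 0\}$.

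Write $\DD_k=\{\ddd_k^{(1)},\dots,\ddd_k^{(m)}\}$, whose elements are \iid\ uniform on the unit sphere $\mathbb{S}^{n-1}$ and independent of $\FF_{k-1}$. Since $f$ is continuously differentiable, $G_k$ is $\FF_{k-1}$-measurable. On $\{G_k\ne0\}$ the event $\{\cm(\DD_k,-G_k)\le 0\}$ coincides with $\bigcap_{i=1}^m\{(\ddd_k^{(i)})^\trs G_k\ge 0\}$. I will define the deterministic function
\[
  \phi(v)=\Pr\Bigl(\textstyle\bigcap_{i=1}^m\{(\ddd_k^{(i)})^\trs v\ge 0\}\Bigr), \qquad v\ne 0.
\]
Then the standard freezing lemma for conditional expectations (\eg,~\cite[Section~4.1]{Durrett_2019}) gives, almost surely on $\{G_k\ne 0\}$,
\[
  \Pr\left(\cm(\DD_k,-G_k)\le 0\mid\FF_{k-1}\right)=\phi(G_k).
\]
Here I apply the lemma to the jointly measurable function $(\MD,v)\mapsto\ind(\cm(\MD,-v)\le 0)\ind(v\ne0)$, which lets the indicator $\ind(G_k\ne0)$ be handled cleanly.

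To compute $\phi(v)$ for fixed $v\ne0$, I use independence to write $\phi(v)=\prod_{i=1}^m\Pr((\ddd_k^{(i)})^\trs v\ge0)$. By rotational symmetry of the uniform distribution, $\ddd$ and $-\ddd$ have the same law, so $\Pr(\ddd^\trs v>0)=\Pr(\ddd^\trs v<0)$. The hyperplane $\{\ddd^\trs v=0\}$ has measure zero on the sphere, so each factor equals $1/2$. Therefore $\phi(v)=2^{-m}$ for every $v\ne0$, which yields~\eqref{eq:prob-asc-def} with $p=2^{-m}$.

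The only delicate step is justifying the freezing lemma: I must confirm that $\DD_k$ is independent of the whole $\sigma$-algebra $\FF_{k-1}$, not merely of the previous directions. This is exactly what "no dependence on existing polling directions or iterates" supplies, since $\FF_{k-1}$ is generated by $\DD_0,\dots,\DD_{k-1}$ and $X_1,\dots,X_k$. I must also confirm that the relevant function is measurable, which follows because $\cm(\MD,-v)$ is a maximum of finitely many continuous functions of $(\MD,v)$ on $v\ne0$.
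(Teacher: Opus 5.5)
Your proposal is correct and follows essentially the paper's route: you condition on $\FF_{k-1}$, use the independence of $\DD_k$ from $\FF_{k-1}$ to freeze $G_k$ (the paper does this via Corollary~\ref{cor:calculate_probability}), and compute the frozen probability as $2^{-m}$ by symmetry. The only cosmetic difference is that the paper first passes to the indicator-free form $\Pr(\min_{\ddd\in\DD_k}\ddd^\trs G_k\ge 0\mid\FF_{k-1})\ge 2^{-m}$ via Proposition~\ref{prop:prob-asc-def-equiv}, whereas you handle the indicator $\ind(G_k\ne 0)$ directly in the definition.
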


Our analysis will heavily depend on the~0-1 process~$\{Y_k\}$ with
\begin{equation}\label{eq:define_Y_k}
    Y_k = \ind\Big(\min_{\ddd\in\DD_k} \ddd^\trs G_k < 0 \Big)
    \quad \text{for each } k\ge 0,
\end{equation}
which provides us with an equivalent definition of probabilistic ascent as stated in Lemma~\ref{lem:submartingale-Y_k}. This equivalence is
a simple consequence of Proposition~\ref{prop:prob-asc-def-equiv}.
\begin{lemma}\label{lem:submartingale-Y_k}
  Consider Algorithm~\ref{alg:direct_search_r} with~$f$ being continuously differentiable on~$\RR^n$. For any~$p\in [0,1]$,~$\{\DD_k\}$ is a sequence of~$p$-probabilistic ascent sets if and only if the sequence~$\{Y_k\}$ defined by~\eqref{eq:define_Y_k} satisfies
  \begin{equation}\label{eq:Y_k-condition}
    \Pr\left(Y_k = 0 \mid \FF_{k-1}\right) \;\ge\; p \quad \text{for each } k\ge 0.
  \end{equation}
\end{lemma}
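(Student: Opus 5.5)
We need to show that condition \eqref{eq:prob-asc-def} is equivalent to \eqref{eq:Y_k-condition}. The plan is to compare the events $\{\cm(\DD_k,-G_k)\le 0\}$ and $\{Y_k=0\}$ separately on $\{G_k\ne 0\}$ and on $\{G_k=0\}$, and then use the $\FF_{k-1}$-measurability of $G_k$ to move indicators in and out of the conditional probabilities.

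On $\{G_k\ne 0\}$ the two events coincide. By Definition~\ref{def:cm} and the Blanket Assumption (each $\ddd\in\DD_k$ has unit norm),
\[
\cm(\DD_k,-G_k)=\max_{\ddd\in\DD_k}\frac{-\ddd^\trs G_k}{\|G_k\|}.
\]
So $\cm(\DD_k,-G_k)\le 0$ holds exactly when $\ddd^\trs G_k\ge 0$ for every $\ddd\in\DD_k$. Equivalently, $\min_{\ddd\in\DD_k}\ddd^\trs G_k\ge 0$, which is the event $Y_k=0$. On $\{G_k=0\}$ we always have $\min_{\ddd}\ddd^\trs G_k=0$, so $Y_k=0$ there, whatever value $\cm(\cdot,0)$ takes. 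Both facts give the pointwise identities
\[
\ind(Y_k=0)\,\ind(G_k\ne0)=\ind(\cm(\DD_k,-G_k)\le0)\,\ind(G_k\ne0),\qquad \ind(Y_k=0)\,\ind(G_k=0)=\ind(G_k=0).
\]
Presumably this is the content of Proposition~\ref{prop:prob-asc-def-equiv}, which I would cite if it states exactly this.

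Next, $G_k$ is $\FF_{k-1}$-measurable because $f$ is continuously differentiable. Taking conditional expectations of the two identities and pulling out the $\FF_{k-1}$-measurable indicators gives, almost surely,
\[
\Pr(Y_k=0\mid\FF_{k-1})=\ind(G_k\ne0)\,\Pr(\cm(\DD_k,-G_k)\le0\mid\FF_{k-1})+\ind(G_k=0).
\]

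Both directions of the equivalence now follow from this display.

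\emph{Ascent implies \eqref{eq:Y_k-condition}.} Assume \eqref{eq:prob-asc-def}. On $\{G_k\ne0\}$ the right-hand side is at least $p$. On $\{G_k=0\}$ it equals $1\ge p$.

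\emph{\eqref{eq:Y_k-condition} implies ascent.} Assume \eqref{eq:Y_k-condition}. On $\{G_k\ne0\}$ the display gives $\Pr(\cm\le0\mid\FF_{k-1})\ge p=p\,\ind(G_k\ne0)$. On $\{G_k=0\}$ the required bound is only $\ge 0$, which is trivial.

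The only delicate point is measure-theoretic bookkeeping. All relations hold almost surely, as Remark~\ref{rem:as} requires. Measurability of the events $\{Y_k=0\}$ and $\{\cm(\DD_k,-G_k)\le0\}$ is automatic because $\DD_k$ is a finite set of random vectors. I do not expect any real obstacle.
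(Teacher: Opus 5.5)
Your proof is correct and is essentially the paper's argument. The paper obtains the lemma from Proposition~\ref{prop:prob-asc-def-equiv}, which applies Lemma~\ref{lem:equiv} to $E=\{G_k=0\}\in\FF_{k-1}$ and $F=\{\cm(\DD_k,-G_k)\le 0\}$, using $E\cup F=\{\min_{\ddd\in\DD_k}\ddd^\trs G_k\ge 0\}=\{Y_k=0\}$. That is exactly your decomposition, and your explicit identity for $\Pr(Y_k=0\mid\FF_{k-1})$ simply packages the paper's chain of implications (pulling out the $\FF_{k-1}$-measurable indicator of $E$) into a single display.
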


Condition~\eqref{eq:Y_k-condition} is foundational to our analysis in
Subsections~\ref{ssec:markov_ineq} and~\ref{ssec:chernoff_bound}.
The properties of the sequence~$\{Y_k\}$ needed in our analysis
follow from~\eqref{eq:Y_k-condition} without relying on the specifics of
Algorithm~\ref{alg:direct_search_r}. Such properties include inequality~\eqref{eq:expc} and the
propositions in Subsection~\ref{ssec:lemmas}.

\subsection{Key ingredients and framework of our analysis}\label{ssec:key_ingredients}

Based on the sequence~$\{Y_k\}$ defined in~\eqref{eq:define_Y_k}, we now introduce three additional
sequences that will play major roles in our non-convergence study. We then present a lemma
and build the framework of our analysis around it.

Let us define
\begin{equation}
    \label{eq:YUE}
     \overline{Y}_k  \;=\; \frac{1}{k}\sum_{\ell=0}^{k-1} Y_\ell,
     \quad
     U_k  \;=\; \prod_{\ell=0}^{k-1}\gamma^{Y_\ell} \theta^{1 - Y_\ell},
     \quad
     E_k  \;=\; \bigcap_{\ell=0}^{k -1}\{Y_\ell = 0\},
\end{equation}
with the convention that
\begin{equation}
    \label{eq:UYE0}
  \overline{Y}_0 \;=\;  0, \quad
  U_0 \;=\;  1, \quad
  E_0 \;=\;  \Omega.
\end{equation}
Note that~$\{E_k\}$ is a nonincreasing sequence of events. In addition, since~$0 < \theta < \gamma$, we have
\begin{equation}
    \label{eq:EU}
    E_k \;=\;  \bigcap_{\ell=0}^{k -1} \{U_\ell = \theta^\ell\}.
\end{equation}
We can check that~$Y_k$ is~$\FF_k$-measurable,
and consequently,~$\overline{Y}_k$,~$U_k$, and~$E_k$ are~$\FF_{k-1}$-measurable.

Assuming that~$f$ is differentiable and convex,\footnote{\,Note that a real-valued differentiable convex function
is continuously differentiable~\cite[Theorem~25.5]{Rockafellar_1970}.}
Lemma~\ref{lem:X_k-to-U_k} links the iterates~$\{X_k\}$ with the sequences~$\{Y_k\}$ and~$\{U_k\}$.
As will be detailed in the proof, the convexity of~$f$ provides a useful connection between~$Y_k$
and iteration~$k$ of Algorithm~\ref{alg:direct_search_r}: if~$Y_k = 0$, then the descent
condition~\eqref{eq:sufficient_decrease} cannot be satisfied, leading to~$X_{k+1} = X_k$ and~$A_{k+1}
= \theta A_k$, which is essentially why the lemma holds.

\begin{lemma}\label{lem:X_k-to-U_k}
  Consider Algorithm~\ref{alg:direct_search_r} with~$f$ being differentiable and convex on~$\RR^n$. Then
  \begin{equation}\label{eq:X_k-to-U_k}
    \sup_{k\ge 0} \|X_k - x_0\|  \;\le\; \alpha_0 \sum_{k=0}^\infty Y_k U_k \;\le\; \alpha_0 \sum_{k=0}^\infty U_k.
  \end{equation}
\end{lemma}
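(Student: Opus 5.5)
The plan is to show, realization by realization, that each step of the algorithm is controlled by $Y_k$ and that the step size is dominated by $\alpha_0 U_k$. The argument is pathwise; no probabilistic estimate is needed beyond the definitions.

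\textbf{Step 1: convexity forces unsuccessful iterations when $Y_k=0$.} Fix an iteration $k$ and suppose $Y_k=0$, so that $\ddd^\trs G_k \ge 0$ for every $\ddd\in\DD_k$. By the gradient inequality for differentiable convex $f$,
\[
f(X_k+A_k\ddd)\;\ge\; f(X_k)+A_k\ddd^\trs G_k\;\ge\; f(X_k)
\]
for each $\ddd\in\DD_k$. Hence $f(X_k)-f(X_k+A_k\ddd)\le 0<\rho(A_k)$, and the sufficient decrease condition~\eqref{eq:sufficient_decrease} fails for every polling direction. This holds also when $G_k=0$, since then $Y_k=0$ and the same inequality applies. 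Consequently
\[
Y_k=0 \;\Longrightarrow\; X_{k+1}=X_k \text{ and } A_{k+1}=\theta A_k.
\]
Equivalently, $\ind(X_{k+1}\ne X_k)\le Y_k$.

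\textbf{Step 2: bound the step size by $\alpha_0U_k$.} By the recurrence~\eqref{eq:Ak_recur}, $A_{k+1}/A_k$ equals $\gamma$ on successful iterations and $\theta$ otherwise. Compare this with the factor $\gamma^{Y_k}\theta^{1-Y_k}$ used in $U_{k+1}$:
\begin{itemize}
\item if $Y_k=0$, Step 1 gives the factor $\theta$, which matches;
\item if $Y_k=1$, the factor is $\gamma$ or $\theta$, and both are at most $\gamma$ because $\theta<1\le\gamma$.
\end{itemize}
In every case $A_{k+1}/A_k\le \gamma^{Y_k}\theta^{1-Y_k}$. Induction from $A_0=\alpha_0=\alpha_0U_0$ then yields $A_k\le \alpha_0U_k$ for all $k$.

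\textbf{Step 3: telescope.} Since the polling directions are unit vectors, $\|X_{k+1}-X_k\| = A_k\ind(X_{k+1}\ne X_k)\le A_kY_k\le \alpha_0Y_kU_k$. For any $K$, the triangle inequality gives
\[
\|X_K-x_0\|\;\le\;\sum_{k=0}^{K-1}\|X_{k+1}-X_k\|\;\le\;\alpha_0\sum_{k=0}^{\infty}Y_kU_k,
\]
where extending to the infinite series is valid because all terms are nonnegative. Taking the supremum over $K$ gives the first inequality of~\eqref{eq:X_k-to-U_k}. The second inequality follows from $Y_k\le1$ and $U_k\ge0$.

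The only genuinely substantive step is Step 1, which uses convexity together with the strict positivity of $\rho$. The point to watch there is the case $G_k=0$, which is handled by the same gradient inequality. Everything else is bookkeeping, holding for every realization (almost surely), with Step 2 relying on the ordering $\theta<1\le\gamma$.
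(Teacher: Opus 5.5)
Your proposal is correct and matches the paper's proof essentially step for step. The paper likewise uses convexity when $Y_k=0$ to get $\|X_{k+1}-X_k\|\le Y_kA_k$ and $A_{k+1}\le\gamma^{Y_k}\theta^{1-Y_k}A_k$, iterates to obtain $A_k\le\alpha_0U_k$, and then telescopes and takes the supremum; your explicit gradient inequality, your use of $\rho(A_k)>0$, and your handling of the case $G_k=0$ simply spell out what the paper compresses into ``due to the convexity of $f$''.
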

\begin{proof}
  For each~$k\ge 0$, we note that
  \begin{equation}\label{eq:change_X_k}
  \|X_{k+1} - X_k\| \;\le\; Y_k A_k.
  \end{equation}
  Indeed, if~$Y_k = 0$, then~$\DD_k$ contains no descent direction, so that the descent condition~\eqref{eq:sufficient_decrease} can never be satisfied due to the convexity of~$f$, leading to~$X_{k+1} = X_k$ and thus~\eqref{eq:change_X_k}; when~$Y_k = 1$, inequality~\eqref{eq:change_X_k} holds because of our blanket assumption that~$\DD_k$ contains only unit vectors.
  Following a similar logic, we have
  \begin{equation}\label{eq:alpha}
    A_{k+1} \;\le\; \gamma^{Y_{k}}\theta^{1 - Y_{k}} A_k,
  \end{equation}
  which is because~$A_{k+1} = \theta A_k$ if~$Y_k = 0$ and~$A_{k+1} \le \gamma A_k$ otherwise.
  Recalling~$A_0 = \alpha_0$ and the definition of~$U_k$ in~\eqref{eq:YUE}, we use~\eqref{eq:alpha}
  recursively and obtain
  \begin{equation}\label{eq:upperbound}
    A_k \;\le\; \alpha_0 \prod_{\ell=0}^{k-1}\gamma^{Y_\ell} \theta^{1 - Y_\ell} \;=\; \alpha_0 U_k.
  \end{equation}
  Since~$X_0 = x_0$, by~\eqref{eq:change_X_k} and~\eqref{eq:upperbound}, we have
  \begin{equation}\label{eq:bound_X_k}
    \|X_k - x_0\| \;\le\; \sum_{\ell=0}^{k-1} \|X_{\ell+1} - X_\ell\| \;\le\; \sum_{\ell=0}^{k-1} Y_{\ell} A_\ell \;\le\; \alpha_0 \sum_{\ell=0}^{k-1} Y_{\ell} U_\ell \;\le\; \alpha_0 \sum_{\ell=0}^{k-1} U_\ell,
  \end{equation}
  where the last inequality is because~$Y_\ell \le 1$. Finally, we get~\eqref{eq:X_k-to-U_k} by taking the supremum over~$k\ge 0$ in~\eqref{eq:bound_X_k}.
\end{proof}

\begin{remark}
    \label{rem:simple_decrease}
    Lemma~\ref{lem:X_k-to-U_k} remains valid if Algorithm~\ref{alg:direct_search_r} adopts the simple
    decrease condition~\eqref{eq:simple_decrease} in place of the sufficient decrease
    condition~\eqref{eq:sufficient_decrease}, because the former still cannot be fulfilled
    if~$f$ is convex and~$Y_k = 0$.
    Hence, our non-convergence theory based on Lemma~\ref{lem:X_k-to-U_k} will also apply to
    the variant of Algorithm~\ref{alg:direct_search_r} with~\eqref{eq:simple_decrease}
    replacing~\eqref{eq:sufficient_decrease}.
    This includes Theorems~\mbox{\ref{thm:loose}--\ref{thm:quantitative}},~\ref{thm:tight},~\ref{thm:nonsmooth},
    and their corollaries.
\end{remark}

\textbf{Framework of our analysis.}~Roughly, the major step of our analysis is to show that
\begin{equation}\label{eq:non-conv-goal}
\Pr\left(\sup_{k\ge 0} \|X_k - x_0\| < \zeta\right) \;>\; 0
\end{equation}
for some~$\zeta > 0$; once this is established, we will have~$\{X_k\}$
bounded away from stationarity with positive probability if~$x_0$ is
``sufficiently non-stationary'',~\eg,~if~$\gap(x_0, \sol(f)) \ge \zeta$.
According to Lemma~\ref{lem:X_k-to-U_k}, we can establish~\eqref{eq:non-conv-goal} by
proving
\begin{equation}
  \label{eq:psum}
  \Pr\left(\sum_{k=0}^\infty Y_kU_k < \frac{\zeta}{\alpha_0}\right) \;>\; 0,
\end{equation}
or more strongly, $\Pr\big(\sum_{k=0}^\infty U_k < \zeta /\alpha_0\big) > 0$.
Our main results \mbox{Theorems~\ref{thm:loose}--\ref{thm:quantitative}} and~\ref{thm:tight} all
follow this framework, directly or indirectly.

\begin{remark}
    \label{rem:non_conv}
    According to~\eqref{eq:bound_X_k}, inequality~\eqref{eq:psum} indeed
    ensures~$\Pr\big(\!\sum_{k=0}^\infty\|X_{k+1}-X_{k}\|<\zeta\big) \!>\!0$, implying
    that
    \[
        \Pr\big(\{X_k\}~\text{converges to a point in}~\BB(x_0,\zeta)\big) \;>\; 0.
    \]
    Hence, the analysis sketched above will actually guarantee that~$\{X_k\}$ converges to
    a non-stationary point \textnormal{(}rather than diverges\textnormal{)} with positive probability.%
\end{remark}

\begin{remark}
    \label{rem:anyset}
    For any arbitrarily given set~$\TT$,
    inequality~\eqref{eq:psum} can ensure that~$\{X_k\}$ remains bounded away from~$\TT$ with positive
    probability as long as~$x_0$ is sufficiently distant from~$\TT$, although
    the primary interest of our analysis is the special case with~$\TT=\sol(f)$.
\end{remark}

\subsection{Non-convergence analysis via Markov's inequality}\label{ssec:markov_ineq}

We now conduct a non-convergence analysis of Algorithm~\ref{alg:direct_search_r} using Markov's
inequality. It serves as a quick illustration of the framework presented at the end of
Subsection~\ref{ssec:key_ingredients}. Its conclusion will be tightened by a refined
analysis in Subsection~\ref{ssec:chernoff_bound} via a conditional Chernoff bound.%

\begin{theorem}\label{thm:loose}
    Consider Algorithm~\ref{alg:direct_search_r} with~$f$ being differentiable and convex on~$\RR^n$. If~$\{\DD_k\}$~is a sequence of~$p$-probabilistic ascent sets with~$p > (\gamma - 1)/(\gamma-\theta)$, then we have
    \begin{equation}
        \label{eq:markov}
        \Pr\left(\gap\big(\{X_k\},\sol(f)\big)>0\right) \;>\; 0,
    \end{equation}
    provided that~$\gap(x_0, \sol(f)) > \alpha_0 / [1-\gamma(1-p) - \theta p]$.
\end{theorem}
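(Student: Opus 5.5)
We follow the framework at the end of Subsection~\ref{ssec:key_ingredients}. By Lemma~\ref{lem:X_k-to-U_k}, $\sup_k\|X_k-x_0\|\le \alpha_0\sum_{k=0}^\infty U_k$. The plan is therefore to bound $\expc[\sum_k U_k]$, then use Markov's inequality to show that $\sum_k U_k$ is small with positive probability, and finally to convert this into a positive gap between $\{X_k\}$ and $\sol(f)$.

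\textbf{Step 1: a one-step bound.} Since $U_{k+1}=U_k\gamma^{Y_k}\theta^{1-Y_k}$ and $U_k$ is $\FF_{k-1}$-measurable, we have
\[
\expc[U_{k+1}\mid\FF_{k-1}]=U_k\bigl(\gamma\Pr(Y_k=1\mid\FF_{k-1})+\theta\Pr(Y_k=0\mid\FF_{k-1})\bigr).
\]
By Lemma~\ref{lem:submartingale-Y_k}, $\Pr(Y_k=0\mid\FF_{k-1})\ge p$. Because $\gamma>\theta$, the bracket is therefore at most $\gamma(1-p)+\theta p=:r$. Taking expectations and inducting gives $\expc[U_k]\le r^k$.

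\textbf{Step 2: summing.} The hypothesis $p>(\gamma-1)/(\gamma-\theta)$ is exactly equivalent to $r<1$. Monotone convergence then yields
\[
\expc\Bigl[\sum_{k=0}^\infty U_k\Bigr]\le \frac{1}{1-r}=\frac{1}{1-\gamma(1-p)-\theta p}.
\]

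\textbf{Step 3: Markov's inequality.} Set $\zeta:=\gap(x_0,\sol(f))$. By hypothesis $\zeta>\alpha_0/(1-r)$, and $\zeta$ may be $\infty$. Markov's inequality gives
\[
\Pr\Bigl(\alpha_0\sum_k U_k\ge \zeta\Bigr)\le \frac{\alpha_0}{(1-r)\zeta}<1.
\]
(The case $\zeta=\infty$ is trivial once we know $\sum_kU_k<\infty$ a.s.) Hence with positive probability $\alpha_0\sum_kU_k<\zeta$.

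\textbf{Step 4: converting to a gap.} This is the step needing care, because we need a strict positive gap for the whole sequence, not merely $\|X_k-x_0\|<\zeta$ for each $k$. On the event $S:=\sum_kU_k<\zeta/\alpha_0$, we use $\sup_k\|X_k-x_0\|\le\alpha_0 S=:\eta<\zeta$. For every $k$ and every $s\in\sol(f)$, the triangle inequality gives
\[
\|X_k-s\|\ge\|x_0-s\|-\|X_k-x_0\|\ge\zeta-\eta.
\]
Therefore $\gap(\{X_k\},\sol(f))\ge\zeta-\eta>0$ on this event, which has positive probability. If $\sol(f)=\emptyset$, the gap is $\infty$ by convention and there is nothing to prove. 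This establishes~\eqref{eq:markov}.

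I expect only a minor obstacle: the conditional-expectation step must be handled almost surely, per Remark~\ref{rem:as}, and must use that $U_k\ge0$ is predictable.
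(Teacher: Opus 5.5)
Your proposal is correct and follows essentially the same route as the paper: Lemma~\ref{lem:X_k-to-U_k} to reduce to $\sum_k U_k$, the tower-property bound $\expc(U_k)\le[\gamma(1-p)+\theta p]^k$, Tonelli/monotone convergence, and Markov's inequality. Your triangle-inequality argument in Step~4 spells out the inclusion $\{\gap(\{X_k\},\sol(f))>0\}\supseteq\{\sup_k\|X_k-x_0\|<\gap(x_0,\sol(f))\}$ that the paper states without comment, and your Markov bound also gives the quantitative estimate noted in Remark~\ref{rem:quantitative_markov}.
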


\begin{proof}
    Note that
    \[
    \left\{\gap\big(\{X_k\},\sol(f)\big) > 0\right\}
    \,\supseteq\,
     \left\{\sup_{k\ge 0} \|X_k - x_0\| < \gap\big(x_0,\sol(f)\big)\right\}
     \,\supseteq\,
     \left\{\sum_{k=0}^\infty U_k < \frac{\gap\big(x_0,\sol(f)\big)}{\alpha_0}\right\},
    \]
    where the last inclusion is due to Lemma~\ref{lem:X_k-to-U_k}. Therefore, it suffices to show that
    \begin{equation}
        \label{eq:goal_markov}
      \Pr\left( \sum_{k=0}^\infty U_k \ge \frac{\gap\big(x_0,\sol(f)\big)}{\alpha_0} \right) \;<\; 1.
    \end{equation}
    Define~$\beta = 1/ [1-\gamma(1-p)-\theta p]$. Recalling the assumption
    that~$\gap(x_0, \sol(f)) > \alpha_0 \beta$ and Markov's inequality, we only need to prove that
    \begin{equation}\label{eq:to_prove_expectation}
      \expc\left( \sum_{k=0}^\infty U_k \right) \;\le\; \beta.
    \end{equation}
    Our assumption on~$p$ ensures~$0< \gamma(1-p) + \theta p < 1$, rendering~$\beta = \sum_{k=0}^\infty [\gamma(1-p) + \theta p]^k$.
    Meanwhile, Tonelli's theorem~\cite[page 420]{Royden_Fitzpatrick_2010} (also~\cite[Theorem~1.7.2]{Durrett_2019})
    yields~$\expc(\sum_{k=0}^\infty U_k) \!=\! \sum_{k=0}^\infty \expc(U_k)$. Thus, the proof of~\eqref{eq:to_prove_expectation} can be reduced to establishing
    \begin{equation}\label{eq:U_k-ineq_to_prove}
      \expc(U_k) \;\le\; [\gamma (1-p) + \theta p]^k \quad \text{for each~$k\ge 0$}.
    \end{equation}

    The proof of~\eqref{eq:U_k-ineq_to_prove} is standard. For each~$k\ge 0$,
    using the tower property of conditional expectation and the definition of~$\{U_k\}$ in~\eqref{eq:YUE}, we have
  \begin{equation*}
      \expc(U_{k+1})
      \;=\; \expc\left(\expc\left(\gamma^{Y_{k}}\theta^{1 - Y_{k}}U_k \mid \FF_{k-1} \right) \right)
      \;=\; \expc\left( \expc\left(\gamma^{Y_{k}}\theta^{1 - Y_{k}} \mid \FF_{k-1} \right) U_k \right),
  \end{equation*}
  where the last equality is because~$U_k$ is~$\FF_{k-1}$-measurable. By Lemma~\ref{lem:submartingale-Y_k},
  \begin{equation}
      \label{eq:expc}
  \expc\left(\gamma^{Y_{k}}\theta^{1 - Y_{k}} \mid \FF_{k-1} \right) = \gamma \Pr(Y_k = 1 \mid \FF_{k-1}) + \theta \Pr(Y_k = 0 \mid \FF_{k-1}) \le \gamma (1-p) + \theta p.
  \end{equation}
  Hence, we have~$\expc(U_{k+1}) \le [\gamma (1-p) + \theta p] \expc(U_k)$,
  which implies~\eqref{eq:U_k-ineq_to_prove} and concludes our proof.
\end{proof}

\begin{remark}
    Theorem~\ref{thm:loose} and its proof hold trivially if~$\sol(f)=\emptyset$, as we
    have~$\gap(\,\cdot\,,\sol(f)) = \infty$.%
\end{remark}

\begin{remark}
    \label{rem:quantitative_markov}
    Estimating the probability in~\eqref{eq:goal_markov} by Markov's inequality can
    strengthen~\eqref{eq:markov}~to
    \[
        \Pr\left(\gap\big(\{X_k\},\sol(f)\big)>0\right) \;\ge\;
        1 - \frac{\alpha_0}{[1-\gamma(1-p) - \theta p] \gap(x_0, \sol(f))}.
    \]
\end{remark}

\subsection{Non-convergence analysis via a conditional Chernoff bound}\label{ssec:chernoff_bound}

The analysis in the preceding subsection is based on the study of~$\sum_{k=0}^\infty U_k$ by
Markov's inequality, which is rather loose.
This subsection conducts a refined analysis via a conditional Chernoff bound
for~$\{Y_k\}$.
We establish the non-convergence of Algorithm~\ref{alg:direct_search_r}
when~$\{\DD_k\}$ is a sequence of~$p$-probabilistic ascent sets with~$p > p_*$
and~$x_0$ is any non-stationary point,
where~$p_*$ is defined by~\eqref{eq:define-p_*}.
Furthermore, we provide a lower bound for the probability that~$\{X_k\}$ remains
bounded away from stationarity and verify it numerically.
This analysis enables us to demonstrate that the probabilistic descent assumption
in Theorem~\ref{thm:conv_pds} is necessary for the global convergence of
Algorithm~\ref{alg:direct_search_r} with the typical implementation except for a boundary case.%

\subsubsection{Lemmas and observations}
\label{ssec:lemmas}
We now present several propositions about the $0$-$1$ process~$\{Y_k\}$ defined by~\eqref{eq:define_Y_k}
together with the associated sequences~$\{\overline{Y}_k\}$,~$\{U_k\}$, and~$\{E_k\}$ given by~\eqref{eq:YUE}.
We emphasize that the propositions are purely consequences of condition~\eqref{eq:Y_k-condition} and
independent of the algorithmic details of~PDS,
making them applicable in any context where~\eqref{eq:Y_k-condition} holds.
Indeed, they require only a weaker version of~\eqref{eq:Y_k-condition} that replaces~$\FF_k$
with~$\FF_k^{Y} =\sigma(Y_0, \dots, Y_k)$, which is a smaller~(coarser)~$\sigma$-algebra.%

Because of the clear separation between the algorithmic details and the propositions in this subsection,
which is done on purpose,
readers who are primarily interested in the algorithmic aspects of our analysis can proceed directly
to Subsection~\ref{sssec:qualitative_quantitative} and refer back to this subsection easily when needed.

Lemma~\ref{lem:strong_prob_exp_des} establishes a conditional Chernoff bound for~$\{Y_k\}$,
which is essentially a generalization of~\cite[Lemma~4.5]{Gratton_Royer_Vicente_Zhang_2015}.
Lemma~\ref{lem:shift} shows that condition~\eqref{eq:Y_k-condition} is preserved under conditioning
on~$E_{k_0}$ with any given integer~$k_0\ge 0$, as long as we shift the indices of~$\{Y_k\}$ and~$\{\FF_k\}$ by~$k_0$.
Both lemmas are proved in Appendix~\ref{app:proofs} since the arguments are straightforward.

\begin{lemma}\label{lem:strong_prob_exp_des}
  If~$0 < q < p \le 1$, then condition~\eqref{eq:Y_k-condition} implies that
  \begin{equation}\label{eq:chernoff_bound}
    \Pr \left( \overline{Y}_k \ge 1-q \mid E_{k_0} \right) \;\le\; \exp \left[-\frac{(p-q)^2}{2p}(k+k_0)\right] \quad \text{for all~$k\ge 0$~and~$k_0\ge 0$}.
  \end{equation}
\end{lemma}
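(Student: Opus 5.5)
The approach is a one-sided exponential (Chernoff) bound for the zeros of $\{Y_k\}$. The first $k_0$ terms are frozen to $0$ on $E_{k_0}$, and this is what produces the extra factor $k_0$ in the exponent. Put $Z_\ell = 1 - Y_\ell$, so that \eqref{eq:Y_k-condition} reads $\Pr(Z_\ell = 1\mid\FF_{\ell-1})\ge p$. On $E_{k_0}$ we have $Z_0=\dots=Z_{k_0-1}=1$. The event $\{\overline{Y}_k\ge 1-q\}$ is $\{\sum_{\ell=0}^{k-1} Z_\ell\le qk\}$.

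\textbf{Trivial cases.} We may assume $\Pr(E_{k_0})>0$, since otherwise the conditional probability is vacuous or defined by convention. If $k=0$, or if $1\le k\le k_0$, then on $E_{k_0}$ we have $\overline{Y}_k=0<1-q$, so the probability is $0$. More generally, on $E_{k_0}$ the event forces $k_0\le qk$, so the only case left is $k>k_0$ with $k_0\le qk$.

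\textbf{Exponential bound.} Fix $t>0$. For $\ell\ge k_0$ the set $E_{k_0}$ is $\FF_{\ell-1}$-measurable, so
\[
\expc\bigl(e^{-tZ_\ell}\mid\FF_{\ell-1}\bigr)\;\le\;1-p(1-e^{-t})\;\le\;e^{-s},\qquad s:=p(1-e^{-t}).
\]
Peeling off the factors from $\ell=k-1$ down to $\ell=k_0$ via the tower property gives $\expc\bigl(\ind(E_{k_0})e^{-t\sum_{\ell=k_0}^{k-1}Z_\ell}\bigr)\le e^{-s(k-k_0)}\Pr(E_{k_0})$. Applying Markov's inequality conditionally on $E_{k_0}$, and using $\sum_{\ell<k}Z_\ell = k_0+\sum_{\ell=k_0}^{k-1}Z_\ell$ there, we get
\[
\Pr\bigl(\overline{Y}_k\ge 1-q\mid E_{k_0}\bigr)\;\le\;\exp\bigl[k(tq-s)-k_0(t-s)\bigr].
\]

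\textbf{Choice of $t$ and the elementary inequality.} Take $t=(p-q)/p$ and let $c=(p-q)^2/(2p)$. The bound $1-e^{-t}\ge t-t^2/2$ gives
\[
s\;\ge\;(p-q)-\frac{(p-q)^2}{2p}\;=\;\frac{(p-q)(p+q)}{2p},\qquad\text{hence}\qquad tq-s\;\le\;-c.
\]
It remains to show $k(tq-s)-k_0(t-s)\le -c(k+k_0)$ for all $k_0\in[0,qk]$. The left side minus the right side is affine in $k_0$, so checking the two endpoints suffices.
\begin{itemize}
\item At $k_0=0$ the claim is exactly $tq-s\le -c$.
\item At $k_0=qk$ the left side equals $-ks(1-q)$, so we need $s(1-q)\ge c(1+q)$. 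By the lower bound on $s$, this reduces to $(p+q)(1-q)\ge(p-q)(1+q)$. The difference of the two sides is $2q(1-p)\ge 0$.
\end{itemize}
This yields \eqref{eq:chernoff_bound}.

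The main obstacle is this last step: getting the factor $k+k_0$ rather than $k$ or $k-k_0$. That requires using the constraint $k_0\le qk$, which is forced by the event, together with the linearity in $k_0$.
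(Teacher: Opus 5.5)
Your proof is correct, and its skeleton matches the paper's. Your conditional Markov step and the tower-property peeling give exactly the paper's intermediate bound $\exp[t(kq-k_0)+p(k-k_0)(\me^{-t}-1)]$ from~\eqref{eq:first-claim}--\eqref{eq:second-claim}. The two arguments differ only in the final elementary inequality.

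The paper uses the standard Chernoff choice $t=\log(p/q)$. It proves Lemma~\ref{lem:inf_ineq} by a second-order Taylor expansion of $\varphi(q)=(kq-k_0)\log(p/q)+(k-k_0)(q-p)$ about $q=p$, using $\varphi'(p)=k_0(1-p)/p\ge 0$ and $\varphi''(\xi)\le -(k+k_0)/p$. With this $t$, the coefficients of $k$ and of $k_0$ in the exponent are each at most $-(p-q)^2/(2p)$. So the bound holds for every $k>k_0$ without looking at the event at all.

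Your linearized choice $t=(p-q)/p$ needs only $\me^{-t}\le 1-t+t^2/2$ and a polynomial comparison, which is more elementary. The cost is that the $k_0$-coefficient alone can then be too weak: at $p=1$ one has $t-s<(p-q)^2/(2p)$. So you have to trade it off against the $k$-term over a bounded range of $k_0$.

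Your closing claim that this \emph{requires} $k_0\le qk$ overstates things. The nontrivial case already has $k_0<k$. At the endpoint $k_0=k$ the term $s$ cancels and the exponent is $-kt(1-q)$. The needed inequality $-kt(1-q)\le -k(p-q)^2/p$ then reduces to $1-q\ge p-q$, i.e., $p\le 1$. So either range $[0,k]$ or $[0,qk]$ works with your affine endpoint check; the event-forced constraint is just one convenient way to bound $k_0$.

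A minor point: you need not hedge on $\Pr(E_{k_0})>0$, since Remark~\ref{rem:Pr_E_k} gives $\Pr(E_{k_0})\ge p^{k_0}>0$.
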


\begin{remark}\label{rem:Pr_E_k}
    Noting the definition of~$E_k$ in~\eqref{eq:YUE},
  we can derive from condition~\eqref{eq:Y_k-condition} and the tower property of conditional expectations that
  \begin{equation*}
      \Pr(E_k ) \;\ge\; p^{k} \quad \text{for all~$k\ge 0$.}
  \end{equation*}
  Therefore, the conditional probability in Lemma~\ref{lem:strong_prob_exp_des} is well defined for
  any~$p > 0$.
\end{remark}

\begin{lemma}
    \label{lem:shift}
    Suppose that~$p>0$. Given an integer~$k_0\ge 0$,
    define~$\tilde{Y}_k \;=\; Y_{k_0+k}$ and~$\tilde{\FF}_{k} = \FF_{k_0+k}$ for each~$k$,
    and denote the probability measure~$\Pr(\,\cdot \mid E_{k_0})$ by~$\tilde{\Pr}(\cdot)$.
    Then condition~\eqref{eq:Y_k-condition} ensures~that%
    \begin{equation}
        \label{eq:Y_k_condition_tilde}
        \tilde{\Pr}(\tilde{Y}_k = 0 \mid \tilde{\FF}_{k-1}) \;\ge\; p \quad \text{for each}~k\ge 0.
    \end{equation}
\end{lemma}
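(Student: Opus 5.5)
We have to show that conditioning on $E_{k_0}$ and shifting indices by $k_0$ preserves condition~\eqref{eq:Y_k-condition}. Throughout, we use $\Pr(E_{k_0}) \ge p^{k_0} > 0$ from Remark~\ref{rem:Pr_E_k}, so $\tilde{\Pr}$ is a well-defined probability measure. The plan is to pass from the conditional statement~\eqref{eq:Y_k_condition_tilde} to its integral characterization: a bound $\tilde{\Pr}(\tilde Y_k = 0 \mid \tilde\FF_{k-1}) \ge p$ holds $\tilde{\Pr}$-a.s. if and only if
\[
\tilde{\Pr}\big(\{\tilde Y_k = 0\}\cap B\big) \;\ge\; p\,\tilde{\Pr}(B) \quad\text{for every } B\in\tilde\FF_{k-1}.
\]
We will verify this inequality directly.

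Fix $k\ge 0$ and $B\in \tilde\FF_{k-1}=\FF_{k_0+k-1}$. Here we use the convention $\FF_{-1}=\{\emptyset,\Omega\}$ when $k_0+k=0$, which only occurs for $k_0=k=0$; that case is just~\eqref{eq:Y_k-condition} with $k=0$. The key observation is that $E_{k_0}=\bigcap_{\ell<k_0}\{Y_\ell=0\}$ is $\FF_{k_0-1}$-measurable, since each $Y_\ell$ is $\FF_\ell$-measurable. Because $k\ge 0$, we have $\FF_{k_0-1}\subseteq\FF_{k_0+k-1}$, and therefore $B\cap E_{k_0}\in\FF_{k_0+k-1}$.

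Applying~\eqref{eq:Y_k-condition} at index $k_0+k$ and integrating over $B\cap E_{k_0}$ gives
\[
\Pr\big(\{Y_{k_0+k}=0\}\cap B\cap E_{k_0}\big)
\;=\; \expc\big[\Pr(Y_{k_0+k}=0\mid\FF_{k_0+k-1})\,\ind(B\cap E_{k_0})\big]
\;\ge\; p\,\Pr(B\cap E_{k_0}).
\]
Dividing by $\Pr(E_{k_0})$ yields exactly the required inequality $\tilde{\Pr}(\{\tilde Y_k=0\}\cap B)\ge p\,\tilde{\Pr}(B)$.

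It remains to convert this back into the almost-sure conditional statement under $\tilde{\Pr}$, taking care because conditional probabilities under $\Pr$ and $\tilde{\Pr}$ are defined relative to different measures. Let $Z=\tilde{\Pr}(\tilde Y_k=0\mid\tilde\FF_{k-1})$, which is $\tilde\FF_{k-1}$-measurable. Take $B=\{Z<p\}\in\tilde\FF_{k-1}$. By the defining property of conditional expectation,
\[
\tilde{\Pr}\big(\{\tilde Y_k=0\}\cap B\big) \;=\; \tilde\expc\big[Z\,\ind(B)\big].
\]
Combining this with the inequality above gives $\tilde\expc[(Z-p)\ind(B)]\ge 0$. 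Since $Z-p<0$ on $B$, this forces $\tilde{\Pr}(B)=0$, which is precisely~\eqref{eq:Y_k_condition_tilde}.

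The only delicate step is the measurability of $E_{k_0}$ with respect to $\FF_{k_0+k-1}$. It relies on $Y_\ell$ being $\FF_\ell$-measurable, as stated in the paper, and on the filtration being nondecreasing. Everything else is routine manipulation of conditional expectations.
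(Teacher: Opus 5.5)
Your proof is correct, and its pivotal observation is the same as the paper's: $E_{k_0}\in\FF_{k_0-1}\subseteq\tilde{\FF}_{k-1}$. You differ in how the bound is transferred from $\Pr$ to $\tilde{\Pr}$.

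The paper works with conditional probabilities throughout. Lemma~\ref{lem:pef} gives $\Pr(\{\tilde{Y}_k=0\}\cap E_{k_0}\mid\tilde{\FF}_{k-1})=\Pr(Y_{k_0+k}=0\mid\FF_{k_0+k-1})\,\ind(E_{k_0})\ge p\,\ind(E_{k_0})$. The general cancellation/change-of-measure result then converts this $\Pr$-a.s.\ inequality into the $\tilde{\Pr}$-a.s.\ one. That result is item~\ref{it:multi_rule_equiv} of Lemma~\ref{lem:multi_rule}, used in the form of Remark~\ref{rem:multi_rule} with $\Pr(E_{k_0}\mid\tilde{\FF}_{k-1})=\ind(E_{k_0})$.

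You instead pass to unconditional test-set inequalities $\tilde{\Pr}(\{\tilde{Y}_k=0\}\cap B)\ge p\,\tilde{\Pr}(B)$ for $B\in\tilde{\FF}_{k-1}$. You then recover the almost-sure bound by choosing $B=\{Z<p\}$, which is exactly the mechanism of Lemma~\ref{lem:prob_sigma_to_event} run under $\tilde{\Pr}$.

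Your route is self-contained and needs only the defining property of conditional expectation under $\tilde{\Pr}$. Because everything is checked through integrals, the mismatch between $\Pr$-null and $\tilde{\Pr}$-null sets never becomes an issue. The paper's route packages that subtlety into a reusable lemma that also covers conditioning events outside $\GG$ (where $\Pr(F\mid\GG)\ne\ind(F)$), a generality not needed here.
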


Proposition~\ref{prop:cdf} is a key observation on the series~$\sum_{k=k_0}^\infty U_k$,
where~$k_0\ge 0$ is an integer. It shows that condition~\eqref{eq:Y_k-condition} with~$p > p_*$
renders a lower bound for the cumulative distribution function of~$\sum_{k=k_0}^\infty U_k$
conditioned on~$E_{k_0}$.
More importantly, this lower bound is a positive-valued function independent of~$k_0$
after a suitable scaling.

\begin{proposition}
    \label{prop:cdf}
    If~$p > p_*$, then condition~\eqref{eq:Y_k-condition} implies that there exists a
    function~$\Upsilon$ satisfying
    \begin{equation}
        \label{eq:cdf}
        \Pr\left( \sum_{k=k_0}^\infty U_k < \frac{\theta^{k_0}\zeta}{1-\theta}  \MID E_{k_0} \right)
        \;\ge\; \Upsilon(\zeta) \;>\; 0
    \end{equation}
    for all~$\zeta > 1$ and~$k_0\ge 0$.
    Here, the function~$\Upsilon$ is determined by~$p$, $\theta$, and~$\gamma$.
\end{proposition}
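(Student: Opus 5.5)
The plan is to reduce to $k_0 = 0$ with Lemma~\ref{lem:shift}, and then bound $\sum_k U_k$ on a high-probability event built from an initial run of failures and the conditional Chernoff bound of Lemma~\ref{lem:strong_prob_exp_des}.

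\textbf{Step 1 (reduction to $k_0=0$).} On $E_{k_0}$ we have $U_{k_0}=\theta^{k_0}$. Hence $U_{k_0+j}=\theta^{k_0}\tilde U_j$, where $\tilde U_j=\prod_{\ell=0}^{j-1}\gamma^{\tilde Y_\ell}\theta^{1-\tilde Y_\ell}$ and $\tilde Y_\ell = Y_{k_0+\ell}$. The event in~\eqref{eq:cdf} is therefore $\{\sum_{j\ge0}\tilde U_j<\zeta/(1-\theta)\}$ under $\tilde\Pr=\Pr(\cdot\mid E_{k_0})$. By Lemma~\ref{lem:shift}, the process $\{\tilde Y_k\}$ satisfies~\eqref{eq:Y_k-condition} under $\tilde\Pr$. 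So it suffices to find $\Upsilon(\zeta)>0$, depending only on $p,\theta,\gamma$, with
\[
\Pr\Big(\sum_{k=0}^\infty U_k<\tfrac{\zeta}{1-\theta}\Big)\ge\Upsilon(\zeta)
\]
for every $\{Y_k\}$ satisfying~\eqref{eq:Y_k-condition}. The bound is then uniform in $k_0$ automatically.

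\textbf{Step 2 (choose $q$).} Pick $q\in(p_*,p)$, for instance $q=(p+p_*)/2$. Then $\lambda:=\gamma^{1-q}\theta^{q}<1$, since $q>p_*=\log\gamma/\log(\theta^{-1}\gamma)$. If $\overline Y_k<1-q$, then $U_k=\gamma^{k\overline Y_k}\theta^{k(1-\overline Y_k)}\le\lambda^k$.

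\textbf{Step 3 (initial run plus Chernoff tail).} Fix an integer $N\ge 1$ and work on the event $E_N\cap B_N$, where
\[
B_N=\bigcap_{k\ge N}\{\overline Y_k<1-q\}.
\]
On $E_N$ we have $U_k=\theta^k$ for $k\le N$. Moreover, for $k\ge N$ we get $k\overline Y_k\le k-N$, because the first $N$ values of $Y$ vanish. A crude bound on this event is
\[
\sum_k U_k\le\sum_{k<N}\theta^k+\sum_{k\ge N}\lambda^k\le\frac{1}{1-\theta}+\frac{\lambda^N}{1-\lambda}.
\]
This is already $<\zeta/(1-\theta)$ once $N$ is large, since $\zeta>1$.

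For the probability, I write $\Pr(E_N\cap B_N)=\Pr(E_N)\,\Pr(B_N\mid E_N)$. By Remark~\ref{rem:Pr_E_k}, $\Pr(E_N)\ge p^N$. Lemma~\ref{lem:strong_prob_exp_des} with $k_0=N$ and a union bound give
\[
\Pr(B_N^{\comp}\mid E_N)\le\sum_{k\ge N}\exp\Big[-\tfrac{(p-q)^2}{2p}(k+N)\Big]=\frac{e^{-2cN}}{1-e^{-c}},\qquad c=\frac{(p-q)^2}{2p}.
\]
Choose $N=N(\zeta)$ large enough that this is at most $1/2$ and also $\lambda^N/(1-\lambda)<(\zeta-1)/(1-\theta)$. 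Then set $\Upsilon(\zeta)=p^{N(\zeta)}/2$, which depends only on $p,\theta,\gamma$ through $q$, $\lambda$ and $c$.

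\textbf{Main obstacle.} The delicate point is making the Chernoff bound usable uniformly. Lemma~\ref{lem:strong_prob_exp_des} controls each $k$ separately, so a union bound over infinitely many $k$ is needed. This works only because the exponent contains $k+k_0$, which makes the tail series decay geometrically in $N$.

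The other subtlety is that the strict inequality $\zeta>1$ is essential. The deterministic part $\sum_{k<N}\theta^k$ already approaches $1/(1-\theta)$, so the slack $\zeta-1$ is exactly what absorbs the tail $\lambda^N/(1-\lambda)$. I also need to check that conditioning on $E_N$ is compatible with Lemma~\ref{lem:strong_prob_exp_des}, which it is, since that lemma is stated conditional on $E_{k_0}$. Finally, the reduction in Step 1 is legitimate because Lemma~\ref{lem:strong_prob_exp_des} only uses~\eqref{eq:Y_k-condition}.
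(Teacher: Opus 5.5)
Your proposal is correct and follows the paper's proof essentially step for step. It uses the same $q=(p+p_*)/2$, the same event $E_N\cap\bigcap_{k\ge N}\{\overline{Y}_k<1-q\}$, the same union bound over Lemma~\ref{lem:strong_prob_exp_des} with $k_0=N$, the same $\Upsilon(\zeta)=p^{N}/2$, and the same shift argument via Lemma~\ref{lem:shift}; the paper simply does the shift last rather than first.

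One small inaccuracy is in your commentary, not the proof: the union bound does not depend on the $k_0$ in the exponent. Already $\sum_{k\ge N}\me^{-ck}\to 0$ as $N\to\infty$, so the extra $k_0$ only improves the rate.
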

\begin{proof}
    Our proof has two steps. First, identify a function~$\Upsilon$ fulfilling~\eqref{eq:cdf}
    for~$\zeta > 1$ and~$k_0=0$;
    second, prove that~$\Upsilon$ still works when we relax~$k_0$ to all nonnegative integers.

    \textbf{Step~1.}~Since~$E_{0} = \Omega$ as mentioned in~\eqref{eq:UYE0},
    this step is to determine a positive number~$\Upsilon (\zeta)$ for an arbitrarily given~$\zeta > 1$
    so that
    \begin{equation}
        \label{eq:P0F}
        \Pr(F) \;\ge\; \Upsilon(\zeta)
        \qquad \text{with}\qquad
        F \;=\; \left\{ \sum_{k=0}^\infty U_k < \frac{\zeta}{1-\theta} \right\}.
    \end{equation}
    To this end, we consider the event~$E_l$ defined in~\eqref{eq:YUE} and note that
    \begin{equation}
        \label{eq:P0}
        \Pr (F) \;\ge\; \Pr(F \cap E_l) \;=\;
        \Pr(F \mid E_l)\,
        \Pr(E_l)
    \end{equation}
    for each~$l\ge 0$.
    In the sequel, we will bound~$\Pr(F\mid E_l)$ and~$\Pr(E_l)$ from below,
    and select an~$l$ in such a way that~\eqref{eq:P0} yields a desired lower bound for~$\Pr(F)$.

    Due to the definition of~$F$ in~\eqref{eq:P0F} and
    the fact that~$E_l = \bigcap_{k=0}^{l-1}\{U_k=\theta^k\}$ mentioned in~\eqref{eq:EU}, it
    holds that
    \begin{equation}
        \label{eq:Pr_F_given_El}
        \Pr(F \mid E_l) \;=\; \Pr\left( \sum_{k=0}^{l-1} \theta^k + \sum_{k=l}^{\infty} U_k
        < \frac{\zeta}{1-\theta} \MID E_l \right) \;\ge\; \Pr\left( \sum_{k=l}^{\infty} U_k
        < \frac{\zeta-1}{1-\theta}\MID E_l \right),
    \end{equation}
    motivating us to bound~$\sum_{k=l}^\infty U_k$ from above.
    To do this, we define~$q = (p+p_*)/2$ and note that
  \begin{equation}\label{eq:U_k_sum_bound}
  \left\{\sum_{k=l}^{\infty} U_k < \sum_{k=l}^{\infty} \left( \gamma^{1-q}\theta^{q} \right)^k \right\}
  \;\supseteq\; \bigcap_{k=l}^{\infty}\left\{ U_k^{1/k} < \gamma^{1-q}\theta^{q} \right\}
  \;=\; \bigcap_{k=l}^\infty\left\{\overline{Y}_k < 1-q\right\},
  \end{equation}
  where the last step is because~$U_k^{1/k} = \gamma^{\overline{Y}_k}\theta^{1-\overline{Y}_k}$
  by the definitions of~$\overline{Y}_k$ and~$U_k$ in~\eqref{eq:YUE}.
  Thus,
  \begin{equation}\label{eq:lead_to_Zk}
    \begin{aligned}
      \Pr\left( \sum_{k=l}^{\infty} U_k < \sum_{k=l}^{\infty} \left( \gamma^{1-q}\theta^{q} \right)^k \MID E_l \right)
      & \;\ge\; 1- \Pr\left(\bigcup_{k=l}^\infty \left\{\overline{Y}_k\ge 1-q\right\} \MID E_l \right)\\
      & \;\ge\; 1- \sum_{k=l}^\infty \exp\left[- \frac{(p-q)^2}{2p}(k+l)\right],
    \end{aligned}
  \end{equation}
  which invokes Lemma~\ref{lem:strong_prob_exp_des} in the last step. Let~$l$ be the smallest
  nonnegative integer satisfying
    \begin{equation}
        \label{eq:l_condition}
            \sum_{k=l}^{\infty} \left( \gamma^{1-q}\theta^{q} \right)^k \;\le\; \frac{\zeta-1}{1-\theta}
            \qquad\text{and}\qquad
            \sum_{k=l}^\infty \exp\left[- \frac{(p-q)^2}{2p}(k+l)\right] \;\le\; \frac{1}{2}.
    \end{equation}
    Such an~$l$ exists because~$\gamma^{1-q}\theta^{q} < 1$ and~$(p-q)^2/(2p) > 0$ (observe
    that~$\gamma^{1-p_*}\theta^{p_*}=1$ and recall that~$0\le p_* < q < p$).~The first inequality
    in~\eqref{eq:l_condition} ensures that the right-hand side
    of~\eqref{eq:Pr_F_given_El} is no less than the left-hand side
    of~\eqref{eq:lead_to_Zk}, and the second inequality in~\eqref{eq:l_condition} guarantees
    that the right-hand side of~\eqref{eq:lead_to_Zk} is at least~$1/2$. Therefore,
    we can join~\eqref{eq:Pr_F_given_El} and~\eqref{eq:lead_to_Zk} to obtain
    \[
        \Pr(F \mid E_l) \;\ge\; \frac{1}{2}.
    \]
    Meanwhile, Remark~\ref{rem:Pr_E_k} renders~$\Pr(E_l) \ge p^l$.
    Hence, inequality~\eqref{eq:P0} validates~\eqref{eq:P0F} if we set
    \begin{equation}
        \label{eq:Upsilon}
        \Upsilon(\zeta) \;=\; \frac{p^l}{2},
    \end{equation}
    which is legitimate because~$l$ is fully determined by~$\zeta$
    when~$p$,~$\theta$, and~$\gamma$ are given.
    Thus,~$\Upsilon$ is a function that completes the first step of the proof.

    \textbf{Step~2.}~Now, we prove that the function~$\Upsilon$ found in the first step
    satisfies~\eqref{eq:cdf} for all~$\zeta > 1$ and~$k_0 \ge 0$. Fix an arbitrary~$k_0 \ge 0$.
    Define~$\tilde{\Pr}$,~$\{\tilde{\FF}_k\}$, and~$\{\tilde{Y}_k\}$ as in Lemma~\ref{lem:shift}.
    According to this lemma, condition~\eqref{eq:Y_k-condition} implies condition~\eqref{eq:Y_k_condition_tilde},
    which has exactly the same form as~\eqref{eq:Y_k-condition}, with~$\tilde{\Pr}$,~$\{\tilde{Y}_k\}$,
    and~$\{\tilde{\FF}_k\}$ corresponding to~$\Pr$,~$\{Y_k\}$, and~$\{\FF_k\}$, respectively.
    Therefore, repeating the proof for~\eqref{eq:P0F}, we can verify that~$\Upsilon$ fulfills
    \begin{equation}
        \label{eq:P0F_tilde}
        \tilde{\Pr}(\tilde{F}) \;\ge\; \Upsilon(\zeta)
        \qquad \text{with}\qquad
        \tilde{F} \;=\; \left\{ \sum_{k=0}^\infty \tilde{U}_k < \frac{\zeta}{1-\theta} \right\}
    \end{equation}
for all~$\zeta > 1$,
where~$\tilde{U}_k = \prod_{\ell=0}^{k-1} \gamma^{\tilde{Y}_\ell}\theta^{1-\tilde{Y}_\ell}$ for each~$k\ge 0$.
We will show that~\eqref{eq:P0F_tilde} is indeed~\eqref{eq:cdf}.
The definitions of~$\{\tilde{Y}_k\}$,~$\{U_k\}$, and~$E_{k_0}$~(see
Lemma~\ref{lem:shift} and~\eqref{eq:YUE}) imply that%
\begin{equation}
    \label{eq:UU_k}
\tilde{U}_k \;=\;  \prod_{\ell=k_0}^{k_0+k-1} \gamma^{Y_\ell}\theta^{1-Y_\ell}
\;=\; U_{k_0}^{-1}U_{k_0+k}
\;=\; \theta^{-k_0} U_{k_0+k}
\end{equation}
when~$E_{k_0}$ occurs. Plugging~$\tilde{\Pr}(\cdot) = \Pr(\,\cdot\mid E_{k_0})$
and~\eqref{eq:UU_k} into~\eqref{eq:P0F_tilde}, we have
\[
    \Upsilon(\zeta) \;\le\;
    \Pr(\tilde{F} \mid E_{k_0})
    \;=\; \Pr\left( \sum_{k=0}^\infty \theta^{-k_0} U_{k_0+k} < \frac{\zeta}{1-\theta} \MID E_{k_0} \right)
    \;=\; \Pr\left( \sum_{k=k_0}^\infty U_k < \frac{\theta^{k_0}\zeta}{1-\theta} \MID E_{k_0} \right),
\]
which matches~\eqref{eq:cdf} as desired. This finishes our proof.
\end{proof}

\begin{remark}
    \label{rem:cdf}
    Given an integer~$k_0 \ge 0$,
    condition~\eqref{eq:Y_k-condition} with~$p > p_*$ indeed ensures the equivalence
    \begin{equation*}
        \Pr\left( \sum_{k=k_0}^\infty U_k < \frac{\theta^{k_0}\zeta}{1-\theta} \MID E_{k_0} \right)
        \,>\, 0
        \qquad \Longleftrightarrow \qquad \zeta \,>\, 1.
    \end{equation*}
    The implication from right to left is due to Proposition~\ref{prop:cdf}, while the reverse
    implication holds because~$\sum_{k=k_0}^\infty U_k \ge \sum_{k=k_0}^\infty\theta^k = \theta^{k_0} /(1-\theta)$.
\end{remark}

Proposition~\ref{prop:cdf} leads to Proposition~\ref{prop:rate_of_CDF_YU}, a crucial observation
on the cumulative distribution function of~$\sum_{k=0}^\infty Y_k U_k$.
When~$\{Y_k\}$ fulfills condition~\eqref{eq:Y_k-condition} with~$p> p_*$, this distribution function
turns out to be positive everywhere on~$(0,\infty)$, and its tail at~$0^+$ decays no faster than
a power function with exponent~$\log p / \log \theta$. This observation will help us establish the
non-convergence result in Theorem~\ref{thm:qualitative} and derive a lower bound
for the probability of non-convergence in Theorem~\ref{thm:quantitative}.

\begin{proposition}\label{prop:rate_of_CDF_YU}
  For~$\zeta > 0$, define
  \begin{equation}
    \label{eq:CDF_YU}
    \Phi(\zeta) \;=\; \Pr\left( \sum_{k=0}^{\infty} Y_k U_k < \zeta \right).
  \end{equation}
  If~$p > p_*$, then condition~\eqref{eq:Y_k-condition} implies
  that~$\Phi(\zeta) >0$ for all~$\zeta > 0$, and
  that there exists a constant~$C > 0$ such that
  \begin{equation}
    \label{eq:rate_of_CDF_YU}
    \Phi(\zeta) \;\ge\; C\,\zeta^{\frac{\log p}{\log \theta}} \quad\text{for}\;~\zeta \in (0,1).
  \end{equation}
\end{proposition}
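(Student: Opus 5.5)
The plan is to force a long initial run of unsuccessful iterations and then apply Proposition~\ref{prop:cdf} from that point on. For an integer $k_0\ge 0$, on $E_{k_0}$ we have $Y_0=\dots=Y_{k_0-1}=0$. Hence
\[
\sum_{k=0}^\infty Y_kU_k \;=\; \sum_{k=k_0}^\infty Y_kU_k \;\le\; \sum_{k=k_0}^\infty U_k .
\]
This gives
\[
\Phi(\zeta) \;\ge\; \Pr\left(\sum_{k=k_0}^\infty U_k < \zeta \MID E_{k_0}\right)\Pr(E_{k_0}).
\]
Remark~\ref{rem:Pr_E_k} gives $\Pr(E_{k_0})\ge p^{k_0}$. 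Proposition~\ref{prop:cdf}, applied with the fixed value $\zeta'=2>1$, gives
\[
\Pr\left(\sum_{k=k_0}^\infty U_k < \frac{2\theta^{k_0}}{1-\theta}\MID E_{k_0}\right)\;\ge\;\Upsilon(2)>0.
\]
So if $k_0$ is chosen with $2\theta^{k_0}/(1-\theta)\le\zeta$, we get $\Phi(\zeta)\ge \Upsilon(2)\,p^{k_0}$.

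For every $\zeta>0$ such a $k_0$ exists because $\theta\in(0,1)$. Together with $p>p_*\ge 0$, this yields $\Phi(\zeta)>0$, which proves the first claim.

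For the rate, take $k_0$ to be the smallest nonnegative integer with $\theta^{k_0}\le (1-\theta)\zeta/2$.
\begin{itemize}
\item For $\zeta\in(0,1)$, minimality gives $k_0\le \log((1-\theta)\zeta/2)/\log\theta + 1$.
\item Since $p\in(0,1]$ and $\log p\le 0$, we get
\[
p^{k_0}\;\ge\; p\cdot p^{\log((1-\theta)\zeta/2)/\log\theta} \;=\; p\,\bigl((1-\theta)/2\bigr)^{\log p/\log\theta}\,\zeta^{\log p/\log\theta}.
\]
This uses the identity $p^{\log x/\log\theta}=x^{\log p/\log\theta}$.
\item If $(1-\theta)\zeta/2\ge 1$ the bound would need $k_0=0$ separately, but this does not occur for $\zeta<1$.
\end{itemize}
Setting $C=\Upsilon(2)\,p\,((1-\theta)/2)^{\log p/\log\theta}>0$ gives \eqref{eq:rate_of_CDF_YU}.

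The only delicate point is the direction of the inequalities in the exponent bookkeeping. Since $\log p/\log\theta\ge 0$, rounding $k_0$ up costs only a constant factor of $p$. The case $p=1$ is trivial, as the exponent is then $0$. No other obstacle is expected, because Proposition~\ref{prop:cdf} already carries the uniformity in $k_0$.
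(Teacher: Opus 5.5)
Your proof is correct and follows the paper's argument: restrict to $E_{l}$ with $l$ the smallest integer satisfying $2\theta^{l}/(1-\theta)\le\zeta$, use $Y_k\le 1$ to pass to $\sum_{k\ge l}U_k$, and combine Proposition~\ref{prop:cdf} at the value $2$ with $\Pr(E_l)\ge p^{l}$. You obtain positivity for every $\zeta>0$ directly rather than through monotonicity of $\Phi$, which makes no difference. Your constant $C=\Upsilon(2)\,p\,\bigl((1-\theta)/2\bigr)^{\log p/\log\theta}$ is the right one: since $l<\log[\zeta(1-\theta)/2]/\log\theta+1$ and $\log p\le 0$, the term in \eqref{eq:log_Phi_zeta} should read $+1$ rather than $-1$. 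That is a harmless slip in the paper and does not affect the existence of $C>0$.
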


\begin{proof}
    It suffices to prove~\eqref{eq:rate_of_CDF_YU}, which will ensure the positivity of~$\Phi(\zeta)$
    for all~$\zeta > 0$ because~$\Phi$ is nondecreasing.
    Given a~$\zeta \in (0,1)$, define
    \begin{equation}
        \label{eq:l}
        l \;=\; \left\lceil \frac{\log [\zeta (1-\theta)/2]}{\log \theta} \right\rceil.
    \end{equation}
    Then~$l\ge 0$.
    Recalling that~$E_l=\bigcap_{k=0}^{l-1}\{Y_k=0\}$ as defined in~\eqref{eq:YUE},
    we have
    \begin{equation}
        \label{eq:Y_kU_k}
        \left\{ \sum_{k=0}^\infty Y_kU_k < \zeta \right\}
        \;\supseteq\; \left\{ \sum_{k=l}^{\infty} Y_k U_k < \zeta\right\} \cap E_l
        \;\supseteq\; \left\{ \sum_{k=l}^{\infty} U_k < \frac{2\theta^l}{1-\theta} \right\} \cap E_l,
    \end{equation}
    where the last inclusion uses the inequality~$Y_k\le 1$ and the fact
    that~$2\theta^l/(1-\theta) \le \zeta$ by the definition~\eqref{eq:l} of~$l$.
    Combining~\eqref{eq:Y_kU_k} with the definition of~$\Phi$ in~\eqref{eq:CDF_YU}, we obtain
    \begin{equation}
        \nonumber
        \Phi(\zeta)
        \;\ge\; \Pr\left( \sum_{k=l}^{\infty} U_k < \frac{2\theta^l}{1-\theta} \MID E_l \right) \Pr(E_l)
        \;\ge\; \Upsilon(2) p^l,
    \end{equation}
    where~$\Upsilon(2)$ in the last step comes from Proposition~\ref{prop:cdf} and~$p^l$ comes from Remark~\ref{rem:Pr_E_k}.
    Therefore,
    \[
        \begin{split}
    \log\left[\Phi(\zeta)\zeta^{-\frac{\log p}{\log \theta}}\right]
    \,\ge \, \log[\Upsilon(2)] + l \log p - \left(\frac{\log p}{\log \theta}\right) \log \zeta
    \,=\, \log[\Upsilon(2)] +\left( l - \frac{\log \zeta}{\log \theta} \right)\log p.
        \end{split}
    \]
    Plugging the definition~\eqref{eq:l} of~$l$ into this inequality, we obtain by direct
    calculation that
    \begin{equation}
        \label{eq:log_Phi_zeta}
        \log\left[\Phi(\zeta)\zeta^{-\frac{\log p}{\log \theta}}\right]
        \;\ge\; \log[\Upsilon(2)] + \left(\frac{\log[(1-\theta)/2]} {\log \theta}-1\right)\log p,
    \end{equation}
    which implies~\eqref{eq:rate_of_CDF_YU}, with~$C$ being the
    exponential of the right-hand side in~\eqref{eq:log_Phi_zeta}.
\end{proof}

Note that the exponent in the lower bound in~\eqref{eq:rate_of_CDF_YU} is
independent of~$\gamma$. However, the lower bound itself does depend on~$\gamma$ through the
constant~$C$, which is increasing in~$\Upsilon(2)$ and thus decreasing in~$\gamma$ by
the proof of Proposition~\ref{prop:cdf} \textnormal{(}see~\eqref{eq:l_condition}
and~\eqref{eq:Upsilon} in particular\textnormal{)}.

\subsubsection{Qualitative and quantitative non-convergence results}
\label{sssec:qualitative_quantitative}

This subsection presents our main results on the non-convergence of Algorithm~\ref{alg:direct_search_r}.
We characterize the non-convergence of the algorithm qualitatively in Theorem~\ref{thm:qualitative} and
quantitatively in Theorem~\ref{thm:quantitative}, the latter providing a lower bound for the
probability of non-convergence.

It is worth stressing that our results allow us to use a
lower semicontinuous function~$\mu$ to measure the distance of a given point to optimality,
examples of such an optimality measure include~$f(\cdot)-\inf f$,~$\gap(\,\cdot\,,\,\sol(f))$, and~$\|\nabla f(\cdot)\|$.

Theorem~\ref{thm:qualitative} is our qualitative non-convergence result, stating that
Algorithm~\ref{alg:direct_search_r} stays away from the optimal set with positive probability under a probabilistic ascent
assumption, provided that the algorithm is initialized at a non-stationary point.

\begin{theorem}\label{thm:qualitative}
  Consider Algorithm~\ref{alg:direct_search_r} with~$f$ being differentiable and convex on~$\RR^n$.
  Suppose that~$\{\DD_k\}$~is a sequence of~$p$-probabilistic ascent sets with~$p > p_*$. Then we
  have
  \begin{equation}\label{eq:non-convergence_optimality}
      \Pr\left(\inf_{k\ge 0}\, \mu(X_k) > 0\right) \;>\; 0
  \end{equation}
  for any function~$\mu\mathrel{:}\RR^n\to (-\infty,\infty]$ that is lower semicontinuous
  with~$\mu(x_0)>0$.
  In particular, the conclusion holds if~$\mu$ is~$f(\cdot)-\inf f$,~$\gap(\,\cdot\,,\,\sol(f))$,
  or~$\|\nabla f(\cdot)\|$ and~$x_0$ is not stationary.
\end{theorem}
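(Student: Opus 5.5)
The plan is to combine lower semicontinuity of~$\mu$ with the framework at the end of Subsection~\ref{ssec:key_ingredients}. Since~$\mu$ is lower semicontinuous and~$\mu(x_0)>0$, there exists~$\zeta>0$ such that~$\mu(x) > \mu(x_0)/2$ for all~$x$ in the closed ball~$\overline{\BB}(x_0,\zeta)$. (If~$\mu(x_0)=\infty$, replace~$\mu(x_0)/2$ by~$1$.) Hence
\[
\left\{\sup_{k\ge 0}\|X_k-x_0\| \le \zeta\right\} \;\subseteq\; \left\{\inf_{k\ge 0}\mu(X_k) \ge \tfrac{1}{2}\mu(x_0)\right\} \;\subseteq\; \left\{\inf_{k\ge 0}\mu(X_k) > 0\right\},
\]
and it suffices to show that the leftmost event has positive probability.

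By Lemma~\ref{lem:X_k-to-U_k}, which applies because~$f$ is differentiable and convex, this event contains~$\{\sum_{k=0}^\infty Y_kU_k < \zeta/\alpha_0\}$. Its probability is~$\Phi(\zeta/\alpha_0)$ in the notation of Proposition~\ref{prop:rate_of_CDF_YU}. Lemma~\ref{lem:submartingale-Y_k} turns the~$p$-probabilistic ascent hypothesis into condition~\eqref{eq:Y_k-condition} with the same~$p>p_*$. Proposition~\ref{prop:rate_of_CDF_YU} then gives~$\Phi(\zeta/\alpha_0)>0$, which proves~\eqref{eq:non-convergence_optimality}.

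For the three particular choices of~$\mu$, I check lower semicontinuity and~$\mu(x_0)>0$ at a non-stationary~$x_0$. Because~$f$ is convex, stationarity is the same as global optimality.
\begin{itemize}
\item For~$\mu=f(\cdot)-\inf f$: if~$\inf f=-\infty$, then~$\mu\equiv\infty$, which is allowed since~$\mu$ takes values in~$(-\infty,\infty]$ and is trivially lower semicontinuous. Otherwise~$\mu$ is continuous, and it is positive at~$x_0$ because~$x_0$ is not a minimizer.
\item For~$\mu=\|\nabla f(\cdot)\|$: this is continuous because a differentiable convex function is~$C^1$, and~$\nabla f(x_0)\ne0$.
\item For~$\mu=\gap(\cdot,\sol(f))$: if~$\sol(f)=\emptyset$, then~$\mu\equiv\infty$. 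Otherwise~$\mu$ is the distance to a closed set, hence continuous, and it is positive at~$x_0$ because~$x_0\notin\sol(f)$ and~$\sol(f)$ is closed.
\end{itemize}

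The substantive work, namely the positivity of~$\Phi$ on all of~$(0,\infty)$, is already contained in Proposition~\ref{prop:rate_of_CDF_YU}. The only point needing care is the~$\infty$-valued case of~$\mu$ in the semicontinuity step, and the fix above handles it.
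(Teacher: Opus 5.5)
Your proposal is correct and follows essentially the paper's proof. Lower semicontinuity gives a ball around $x_0$ on which $\mu$ stays bounded away from zero; Lemma~\ref{lem:X_k-to-U_k} then reduces staying in that ball to the event $\sum_k Y_kU_k<\zeta/\alpha_0$, and Proposition~\ref{prop:rate_of_CDF_YU}, via Lemma~\ref{lem:submartingale-Y_k}, shows this event has positive probability. Your separate checks of the three particular $\mu$'s, including the $\infty$-valued cases, are accurate and fill in what the paper leaves to Remark~\ref{rem:optimality_measure} and its footnote on the continuity of $\nabla f$.
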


\begin{proof}
  Take a positive constant~$\varepsilon < \mu(x_0)$.
  By the lower semicontinuity of~$\mu$, there exists a~$\delta> 0$
  such that~$\{x \mathrel{:} \mu(x) > \varepsilon\} \supseteq \BB(x_0,\delta)$. Hence,
  \begin{equation}\label{eq:mu_semicontinuity}
    \left\{ \inf_{k\ge 0} \,\mu(X_k) > 0 \right\}
    \;\supseteq\;
    \Big\{\{X_k\} \subseteq \{x \mathrel{:} \mu(x) > \varepsilon\} \Big\}
    \;\supseteq\;
    \Big\{\{X_k\} \subseteq \BB(x_0, \delta) \Big\}.
  \end{equation}
  Meanwhile, Lemma~\ref{lem:X_k-to-U_k} implies that
    \begin{equation}\label{eq:prob_non-convergence}
        \Big\{ \{X_k\} \subseteq \BB(x_0, \delta) \Big\}
        \;\supseteq\; \left\{ \sup_{k\ge 0} \|X_k - x_0\| < \delta \right\}
        \;\supseteq\; \left\{\sum_{k=0}^\infty Y_k U_k < \frac{\delta}{\alpha_0} \right\}.
    \end{equation}
    The last event in~\eqref{eq:prob_non-convergence} has a positive probability by
    Proposition~\ref{prop:rate_of_CDF_YU}, because~$\{Y_k\}$ satisfies condition~\eqref{eq:Y_k-condition}
    according to
    Lemma~\ref{lem:submartingale-Y_k}. Therefore,~\eqref{eq:mu_semicontinuity}
    and~\eqref{eq:prob_non-convergence} yield~\eqref{eq:non-convergence_optimality}.
\end{proof}

\begin{remark}
  \label{rem:anyfun}
  Theorem~\ref{thm:qualitative} holds even if the lower semicontinuous function~$\mu$ has no
  relation to stationarity, although we are primarily interested in the case where~$\mu(x) = 0$ if and
  only if~$x$ is a stationary point. The same remark applies to Theorem~\ref{thm:quantitative} below.
  See also Remark~\ref{rem:anyset}.
\end{remark}

  Theorem~\ref{thm:qualitative} is stronger than Theorem~\ref{thm:loose} in three aspects.
  First, Theorem~\ref{thm:qualitative} has a weaker requirement on~$p$ since~$p_* = (\log\gamma) / \log(\theta^{-1}\gamma) < (\gamma - 1)/(\gamma-\theta)$.
  Second, the optimality measure in Theorem~\ref{thm:qualitative} can be any lower semicontinuous
  function~$\mu$, whereas Theorem~\ref{thm:loose} applies only to~$\gap(\,\cdot\,,\,\sol(f))$.
  Third, even when $\mu(x)=\gap(x,\sol(f))$, the condition~$\mu(x_0)>0$ in Theorem~\ref{thm:qualitative}
  is weaker than~$\gap(x_0,\sol(f)) > \alpha_0/[1-\gamma(1-p) - \theta p]$ in Theorem~\ref{thm:loose}.

Theorem~\ref{thm:quantitative} is our quantitative non-convergence result, which estimates the
probability that the optimality measure in Theorem~\ref{thm:qualitative} remains close to its
initial value. This provides a lower bound for the non-convergence probability of
Algorithm~\ref{alg:direct_search_r} if its starting point is not stationary.

\begin{theorem}\label{thm:quantitative}
     Under the settings of Theorem~\ref{thm:qualitative},
     if we assume further that~$\mu$ is locally Lipschitz continuous at~$x_0$,
     then there exist constants~$C>0$ and~$\overline{\zeta}>0$ such that the function
    \begin{equation}\label{eq:Psi_def}
            \Psi(\zeta) \;=\; \Pr\Big( \inf_{k\ge 0} \mu(X_k) \,\ge\, (1-\zeta)\,\mu(x_0)\Big)
    \end{equation}
    satisfies
    \begin{equation}\label{eq:pds_quantitative_rate}
        \Psi(\zeta) \;\ge\; C\,\zeta^{\frac{\log p}{\log \theta}}
        \quad\text{for}\;~\zeta \in (0, \overline{\zeta}).
    \end{equation}
\end{theorem}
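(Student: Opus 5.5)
The plan is to reduce $\Psi$ to the distribution function $\Phi$ of Proposition~\ref{prop:rate_of_CDF_YU}, using local Lipschitz continuity of $\mu$ at $x_0$ to convert a multiplicative deviation of $\mu$ into a ball radius around $x_0$.

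\textbf{Step 1: a ball on which $\mu$ stays within the required level.} By local Lipschitz continuity at $x_0$, there exist $r>0$ and $L>0$ such that
\[
|\mu(x)-\mu(x_0)|\le L\|x-x_0\| \quad \text{for all } x\in\BB(x_0,r).
\]
Note that $\mu(x_0)$ is finite and positive, by the setting of Theorem~\ref{thm:qualitative} and the Lipschitz assumption. Set $\delta(\zeta)=\zeta\mu(x_0)/L$. If $\delta(\zeta)\le r$ and $\|x-x_0\|<\delta(\zeta)$, then
\[
\mu(x)\ge \mu(x_0)-L\delta(\zeta)=(1-\zeta)\mu(x_0).
\]
Hence, for $\zeta$ small enough that $\delta(\zeta)\le r$,
\[
\Big\{\inf_{k\ge0}\mu(X_k)\ge(1-\zeta)\mu(x_0)\Big\}\supseteq\Big\{\sup_{k\ge0}\|X_k-x_0\|<\delta(\zeta)\Big\}.
\]

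\textbf{Step 2: pass to $\Phi$.} By Lemma~\ref{lem:X_k-to-U_k}, the last event contains $\{\sum_{k=0}^\infty Y_kU_k<\delta(\zeta)/\alpha_0\}$. Therefore
\[
\Psi(\zeta)\ge\Phi\big(\zeta\mu(x_0)/(L\alpha_0)\big).
\]
By Lemma~\ref{lem:submartingale-Y_k}, $\{Y_k\}$ satisfies condition~\eqref{eq:Y_k-condition} with $p>p_*$. So Proposition~\ref{prop:rate_of_CDF_YU} gives a constant $C_0>0$ with $\Phi(s)\ge C_0 s^{\log p/\log\theta}$ for all $s\in(0,1)$.

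\textbf{Step 3: choose constants.} Let $c=\mu(x_0)/(L\alpha_0)$ and
\[
\overline\zeta=\min\{rL/\mu(x_0),\,1/c\}.
\]
For $\zeta\in(0,\overline\zeta)$ we have $\delta(\zeta)\le r$ and $c\zeta\in(0,1)$, so
\[
\Psi(\zeta)\ge C_0(c\zeta)^{\log p/\log\theta}.
\]
Taking $C=C_0c^{\log p/\log\theta}>0$ gives~\eqref{eq:pds_quantitative_rate}.

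\textbf{Potential obstacles.} The only delicate points are bookkeeping ones. First, the exponent $\log p/\log\theta$ is nonnegative: when $p=1$ it equals $0$, which is harmless, and when $p>0$ the constant $c^{\log p/\log\theta}$ is positive and finite. Second, local Lipschitz continuity must supply a uniform radius $r$ and constant $L$ at $x_0$, which is exactly what the hypothesis means. The substantive work has already been done in Proposition~\ref{prop:rate_of_CDF_YU}, so no step here is expected to be a real obstacle.
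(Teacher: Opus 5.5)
Your proposal is correct and follows essentially the same route as the paper. Both arguments use the local Lipschitz bound together with Lemma~\ref{lem:X_k-to-U_k} to get $\Psi(\zeta)\ge\Phi\big(\zeta\mu(x_0)/(L\alpha_0)\big)$ for small $\zeta$, and then invoke Proposition~\ref{prop:rate_of_CDF_YU}; your explicit choice of $\overline\zeta$ and $C$ simply spells out the constant bookkeeping that the paper leaves implicit.
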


\begin{proof}
  By our assumption on~$\mu$, there exist constants~$L>0$ and~$\delta>0$ such that
  \begin{equation}\label{eq:mu_Lipschitz}
    |\mu(x) - \mu(x_0)| \;\le\; L\|x-x_0\| \quad\text{for all~$x\in \BB(x_0, \delta)$}.
  \end{equation}
  For all~$\zeta \in (0,\, L\delta /\mu(x_0))$, combining~\eqref{eq:mu_Lipschitz} with Lemma~\ref{lem:X_k-to-U_k} renders
  \begin{equation*}
      \left\{ \inf_{k\ge 0} \mu(X_k) \,\ge\, (1-\zeta) \, \mu(x_0) \right\}
    \;\supseteq\;
    \Bigg\{\sup_{k\ge 0}\|X_k - x_0\| < \frac{\zeta\mu(x_0)}{L}\Bigg\}
    \;\supseteq\;
    \Bigg\{\sum_{k=0}^\infty Y_k U_k < \frac{\zeta\mu(x_0)}{L \alpha_0}\Bigg\}.
  \end{equation*}
  Consequently, the definition of~$\Phi$ in~\eqref{eq:CDF_YU} and that of~$\Psi$
  in~\eqref{eq:Psi_def} yield
  \[
      \Psi(\zeta)\;\ge\;\Phi\!\left(\frac{\zeta\mu(x_0)}{L\alpha_{0}}\right).
  \]
  Hence, Proposition~\ref{prop:rate_of_CDF_YU} guarantees the existence of~$C$ and~$\overline{\zeta}$
  that validate~\eqref{eq:pds_quantitative_rate}.
\end{proof}

Recall Corollary~\ref{cor:conv_pds}, which states that
Algorithm~\ref{alg:direct_search_r} will converge with probability~1
if~$m > \log_2(1-{\log \theta}/{\log \gamma})$.
We can now show the non-convergence side.

\begin{corollary}\label{cor:nonconv_pds}
  Consider Algorithm~\ref{alg:direct_search_r} with~$f$ being differentiable and convex on~$\RR^n$.
  Let~$\{\DD_k\}$ be defined as in Corollary~\ref{cor:conv_pds}.
  If~$\gamma = 1$ or
  \begin{equation}
      \label{eq:mineq}
      m \;<\; \log_2\left(1-\frac{\log \theta}{\log \gamma}\right),
  \end{equation}
  then~\eqref{eq:non-convergence_optimality} holds
  for any function~$\mu\mathrel{:}\RR^n\to (-\infty,\infty]$ that is lower semicontinuous with~$\mu(x_0)>0$.
  If we further assume that~$\mu$ is locally Lipschitz continuous at~$x_0$, then there exist
  constants~$C>0$ and~$\overline{\zeta}>0$ such that~\eqref{eq:pds_quantitative_rate} holds
  with~$p = 2^{-m}$.
\end{corollary}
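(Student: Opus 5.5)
The plan is to reduce the corollary to Theorems~\ref{thm:qualitative} and~\ref{thm:quantitative}. Proposition~\ref{prop:prob-asc} tells us that the sequence~$\{\DD_k\}$ of Corollary~\ref{cor:conv_pds} is a sequence of~$p$-probabilistic ascent sets with~$p = 2^{-m}$. Hence the only remaining task is to check that~$2^{-m} > p_*$ under either hypothesis, where~$p_* = \log\gamma/\log(\theta^{-1}\gamma)$ is defined in~\eqref{eq:define-p_*}. Once this holds, both conclusions follow verbatim: \eqref{eq:non-convergence_optimality} from Theorem~\ref{thm:qualitative} for lower semicontinuous~$\mu$ with~$\mu(x_0)>0$, and~\eqref{eq:pds_quantitative_rate} with~$p=2^{-m}$ from Theorem~\ref{thm:quantitative} when~$\mu$ is in addition locally Lipschitz at~$x_0$.

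\textbf{Case~$\gamma = 1$.} Here~$\log\gamma = 0$, so~$p_* = 0$, while~$2^{-m} > 0$ because~$m$ is finite. Thus~$p > p_*$ holds trivially.

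\textbf{Case~$\gamma > 1$.} Now~$\log\gamma > 0$ and~$-\log\theta > 0$, so the quantity~$1 - \log\theta/\log\gamma = \log(\theta^{-1}\gamma)/\log\gamma$ exceeds~$1$ and its logarithm is well defined. Inequality~\eqref{eq:mineq} then reads~$2^m < \log(\theta^{-1}\gamma)/\log\gamma$. Since both sides are positive, taking reciprocals gives~$2^{-m} > \log\gamma/\log(\theta^{-1}\gamma) = p_*$, which is what we need.

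No step is a real obstacle; the argument is a direct substitution. The only points needing care are the signs of the logarithms, so that the inequality is reciprocated in the correct direction, and the observation that~$\gamma=1$ must be treated separately because~\eqref{eq:mineq} is undefined there (its right-hand side is~$+\infty$).
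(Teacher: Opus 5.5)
Your proposal is correct and is essentially the paper's own proof: Proposition~\ref{prop:prob-asc} gives $p=2^{-m}$, and you check $2^{-m}>p_*$ (trivially when $\gamma=1$, by taking reciprocals in \eqref{eq:mineq} when $\gamma>1$) before invoking Theorems~\ref{thm:qualitative} and~\ref{thm:quantitative}. One implicit step you left unstated: Proposition~\ref{prop:prob-asc} assumes $f$ is continuously differentiable, which holds here because a real-valued differentiable convex function on $\RR^n$ is automatically continuously differentiable.
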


\begin{proof}
  Proposition~\ref{prop:prob-asc} ensures that~$\{\DD_k\}$~is a sequence of~$p$-probabilistic
  ascent sets with~$p = 2^{-m}$. According to Theorems~\ref{thm:qualitative} and~\ref{thm:quantitative},
  it suffices to show that~$2^{-m} > p_*$, which is guaranteed
  by the definition of~$p_*$ in~\eqref{eq:define-p_*} if~$\gamma =1$ or~$m$
  satisfies~\eqref{eq:mineq}.
\end{proof}

\begin{remark}
    \label{rem:complement}
  Comparing Corollaries~\ref{cor:conv_pds} and~\ref{cor:nonconv_pds}, we observe that their
  requirements on the algorithmic parameters~$\theta$,~$\gamma$, and~$m$~are almost the
  negations of each other. The only gap
  is the boundary situation with~$m = \log_2 (1 - \log \theta / \log \gamma)$,
  which is a concern only if~$\log_2 (1 - \log \theta / \log \gamma)$ happens to be an integer.
  Neither the convergence theory in~\cite{Gratton_Royer_Vicente_Zhang_2015} nor our non-convergence
  theory covers this situation, leaving an interesting topic for future research.
  The one-dimensional case is however a trivial exception,
  because~$\{\DD_k\}$ is a sequence of~$p_0$-probabilistic~$1$-descent
  sets if~$m = \log_2 (1 - \log \theta / \log \gamma)$,
  making Algorithm~\ref{alg:direct_search_r} converge
  according to Theorem~\ref{thm:conv_pds}.
\end{remark}

For the typical implementation of Algorithm~\ref{alg:direct_search_r} specified in
Corollary~\ref{cor:conv_pds}, we are now able to present the equivalence between the global
convergence of the algorithm and the probabilistic descent property of~$\{\DD_k\}$,
excluding the situation where~$\log_2 (1 - \log \theta / \log \gamma)$ is an integer.
\begin{theorem}
    \label{thm:equiv}
    Consider Algorithm~\ref{alg:direct_search_r} with~$f$ being differentiable, convex, and bounded
    below on~$\RR^n$, and with~$\nabla f$ being Lipschitz continuous on~$\RR^n$.
    Let~$\{\DD_k\}$ be defined as in Corollary~\ref{cor:conv_pds}.
    Suppose that~$x_0\notin \sol(f)$,~$\gamma>1$, and~$\log_2(1-\log\theta/\log\gamma)$ is not
    an integer.
    Then the following statements are equivalent.
    \begin{enumerate}
        \item\label{it:liminf} $\Pr(\liminf_k\|G_k\|=0) = 1$.
        \item\label{it:mbound} $m > \log_2(1- \log\theta /\log\gamma)$.
        \item\label{it:pd} $\{\DD_k\}$ is a sequence of~$p_0$-probabilistic~$\kappa$-descent sets for some~$\kappa > 0$.
    \end{enumerate}
\end{theorem}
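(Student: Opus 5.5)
I would prove the cycle of implications (b)$\Rightarrow$(c)$\Rightarrow$(a)$\Rightarrow$(b), using Corollary~\ref{cor:nonconv_pds} for the last step via contraposition.

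\textbf{(b)$\Rightarrow$(c).} This is the content behind Corollary~\ref{cor:conv_pds} in~\cite{Gratton_Royer_Vicente_Zhang_2015}. Since $\DD_k$ is independent of $\FF_{k-1}$ and $G_k$ is $\FF_{k-1}$-measurable, we condition on $G_k$. By rotational invariance,
\[
\Pr(\cm(\DD_k,-G_k)\ge\kappa\mid\FF_{k-1}) \;=\; 1-\big(1-\varrho(\kappa)\big)^m
\]
when $G_k\neq0$. Here $\varrho(\kappa)$ is the probability that a uniform unit vector has inner product at least $\kappa$ with a fixed unit vector. The function $\varrho$ is continuous in $\kappa$ with $\varrho(0)=1/2$. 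When $G_k=0$, the left-hand side is $0$ or $1$ according to the convention of Remark~\ref{rem:def_cm}; I will follow~\cite{Gratton_Royer_Vicente_Zhang_2015} there, and note that the event $G_k=0$ is harmless for (a) in any case. Condition (b) is equivalent to $1-2^{-m}>p_0$. By continuity there is a $\kappa>0$ with $1-(1-\varrho(\kappa))^m\ge p_0$, which gives (c).

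\textbf{(c)$\Rightarrow$(a).} The hypotheses on $f$ (continuously differentiable, bounded below, Lipschitz gradient) are exactly those of Theorem~\ref{thm:conv_pds}, so (a) follows directly.

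\textbf{(a)$\Rightarrow$(b).} Suppose (b) fails. Because $\log_2(1-\log\theta/\log\gamma)$ is not an integer and $m$ is an integer, we get the strict inequality~\eqref{eq:mineq}. Take $\mu=\|\nabla f(\cdot)\|$, which is continuous and hence lower semicontinuous. For differentiable convex $f$, the condition $x_0\notin\sol(f)$ is equivalent to $\nabla f(x_0)\neq0$, so $\mu(x_0)>0$. Corollary~\ref{cor:nonconv_pds} then gives
\[
\Pr\Big(\inf_k\|G_k\|>0\Big)>0 .
\]
On this event $\liminf_k\|G_k\|\ge\inf_k\|G_k\|>0$, so (a) fails.

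\textbf{Main obstacle.} The only delicate step is (b)$\Rightarrow$(c). There one has to handle the $G_k=0$ convention and justify the conditional computation, namely that independence of $\DD_k$ from $\FF_{k-1}$ allows freezing $G_k$. I would cite~\cite[Appendix~B]{Gratton_Royer_Vicente_Zhang_2015} for this. Alternatively, I would reuse the argument of Proposition~\ref{prop:prob-asc}, with the threshold $0$ replaced by $\kappa$.
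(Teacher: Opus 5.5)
Your proposal is correct and follows the same cycle of implications as the paper. It proves (a)$\Rightarrow$(b) by Corollary~\ref{cor:nonconv_pds} plus the non-integrality of $\log_2(1-\log\theta/\log\gamma)$, (b)$\Rightarrow$(c) by the rotational-invariance computation behind \cite[Corollary~B.4]{Gratton_Royer_Vicente_Zhang_2015} (which the paper simply cites, tacitly under the same convention $\cm(\,\cdot\,,0)=1$ that you adopt), and (c)$\Rightarrow$(a) by Theorem~\ref{thm:conv_pds}; your explicit choice $\mu=\|\nabla f(\cdot)\|$ and the derivation of $1-(1-\varrho(\kappa))^m$ only spell out details the paper leaves implicit.
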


\begin{proof}
    The implication~\ref{it:liminf}~$\Rightarrow$~\ref{it:mbound} is guaranteed by
    Corollary~\ref{cor:nonconv_pds} and the fact that~$m$ cannot
    equal~$\log_2(1- \log\theta /\log\gamma)$.
    The implication~\ref{it:mbound}~$\Rightarrow$~\ref{it:pd} is because
    of~\cite[Corollary~B.4]{Gratton_Royer_Vicente_Zhang_2015},
    and~\ref{it:pd}~$\Rightarrow$~\ref{it:liminf} is due to Theorem~\ref{thm:conv_pds}.
\end{proof}

For the sequence~$\{\DD_k\}$ in Corollary~\ref{cor:conv_pds}, if
the inequality~$\Pr(\cm(\DD_k, -G_k) \ge \kappa\mid \FF_{k-1}) \ge p_0$ in
Definition~\ref{def:p-k_descent} holds for one single~$k\ge 0$, then it holds for
all~$k$~(see~\cite[proof of Corollary~B.3]{Gratton_Royer_Vicente_Zhang_2015}).
This fact is essential for the implication~\ref{it:liminf}~$\Rightarrow$~\ref{it:pd} in
Theorem~\ref{thm:equiv}. A general~$\{\DD_k\}$, however, may ensure the convergence of
Algorithm~\ref{alg:direct_search_r} while satisfying the aforementioned inequality only for a set
of~$k$~(e.g., for~$k$ large enough), without being a sequence of~$p_0$-probabilistic~$\kappa$-descent
sets in the sense of Definition~\ref{def:p-k_descent}. The convergence analysis of
Algorithm~\ref{alg:direct_search_r} in such a scenario is beyond the scope of this paper, but
Subsection~\ref{ssec:weaker_ass} will discuss its non-convergence under a similar setting.

\subsubsection{Tightness of the ascent probability}
\label{sssec:tightness}

Theorem~\ref{thm:qualitative} requires~$\{\DD_k\}$ to be a sequence of~$p$-probabilistic
ascent sets with~$p>p_*$.
Such a requirement cannot be relaxed to~$p\ge p_*$,
which can be seen from the one-dimensional case mentioned in Remark~\ref{rem:complement}.
Indeed, if~$n = 1$ and~$m = \log_2 (1 - \log \theta / \log \gamma)$, then~$\{\DD_k\}$ is both
a sequence of~$p_*$-probabilistic ascent sets and a sequence of~$p_0$-probabilistic~$1$-descent
sets, the latter ensuring the convergence of Algorithm~\ref{alg:direct_search_r} by
Theorem~\ref{thm:conv_pds}.
Example~\ref{exp:converge_pds} extends this idea to general dimensions.
Note that the example defines~$\{\DD_k\}$ using gradient information, even though practical
implementations of Algorithm~\ref{alg:direct_search_r} are supposed to be derivative-free.

\begin{example}\label{exp:converge_pds}
  Consider Algorithm~\ref{alg:direct_search_r} with~$f$ being continuously differentiable and bounded below on~$\RR^n$, and~$\nabla f$ being Lipschitz continuous on~$\RR^n$.
  For each~$k\ge 0$, define
  \begin{equation*}
    \ddd_k \;=\;
    \begin{cases}
      G_k/\|G_k\|, & \text{if}~G_k\ne 0,\\
      d, & \text{otherwise},
    \end{cases}
  \end{equation*}
  where~$d$ is a fixed unit vector \textnormal{(}\eg, a coordinate vector\textnormal{)}.
  Then we set~$\DD_k = \{\xi_k \ddd_k\}$, where~$\xi_k$ is a random variable
  independent of~$\FF_{k-1}$, taking~$1$ and~$-1$ with probability~$p_*$~and~$p_0$,
  respectively~\textnormal{(}recall that~$p_*+p_0 = 1$\textnormal{)}.
  Note that
  \begin{equation*}
    \Pr\Big( \min_{\ddd \in \DD_k} \ddd^\trs G_k \ge 0 \Mid \FF_{k-1} \Big)
    \;=\; \Pr\big( \xi_k \ddd_k^\trs G_k \ge 0 \mid \FF_{k-1}\big)
    \;\ge\; \Pr(\xi_k = 1 \mid \FF_{k-1}) \;=\; p_*.
  \end{equation*}
  Hence, $\{\DD_k\}$ is a sequence of~$p_*$-probabilistic ascent sets according to Proposition~\ref{prop:prob-asc-def-equiv}.
  Similarly, one can check that~$\{\DD_k\}$ is a sequence of~$p_0$-probabilistic~$1$-descent sets
  according to Definition~\ref{def:p-k_descent}.
  Therefore, Theorem~\ref{thm:conv_pds} ensures that~$\Pr\left(\liminf_k \|G_k\|=0\right) = 1$.
\end{example}

Complementing Example~\ref{exp:converge_pds}, Proposition~\ref{prop:critical} clarifies why the
analysis framework described in Subsection~\ref{ssec:key_ingredients} is incapable of handling
the case with~$\{\DD_k\}$ being a sequence of~$p_*$-probabilistic ascent sets:~in this case,
it can happen that~$\sum_{k=0}^\infty Y_kU_k$ almost surely diverges to infinity, trivializing
the bounds in Lemma~\ref{lem:X_k-to-U_k}.
We will prove this proposition in Appendix~\ref{app:proofs}.
\begin{proposition}
    \label{prop:critical}
    If the sequence~$\{Y_k\}$ satisfies
    \begin{equation}
        \label{eq:critical_Y_k}
        \Pr(Y_k =0 \mid \FF_{k-1}) \;=\; p_* \quad \text{for each}~k\ge 0,
    \end{equation}
    then~$\Pr\left(\sum_{k=0}^\infty Y_k U_k = \infty\right) = 1$.
\end{proposition}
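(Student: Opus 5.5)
The plan is to pass to logarithms and recognize $\log U_k$ as a martingale with bounded, nondegenerate increments. The defining property of $p_*$ in~\eqref{eq:define-p_*} is exactly what makes the increments have zero conditional mean, namely $\gamma^{1-p_*}\theta^{p_*}=1$, i.e., $(1-p_*)\log\gamma + p_*\log\theta = 0$.

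First I dispose of the degenerate case $\gamma=1$. Then $p_*=0$, so~\eqref{eq:critical_Y_k} gives $Y_k=1$ almost surely for every $k$. Hence $U_k=1$ almost surely, and $\sum_k Y_kU_k=\sum_k 1=\infty$ almost surely.

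Now assume $\gamma>1$, so that $p_*\in(0,1)$. Define
\[
S_k \;=\; \log U_k \;=\; \sum_{\ell=0}^{k-1} Z_\ell,
\qquad
Z_\ell \;=\; Y_\ell\log\gamma + (1-Y_\ell)\log\theta .
\]
Since $Y_\ell$ is $\FF_\ell$-measurable, $S_k$ is $\FF_{k-1}$-measurable. By~\eqref{eq:critical_Y_k},
\[
\expc(Z_k\mid\FF_{k-1}) \;=\; (1-p_*)\log\gamma + p_*\log\theta \;=\; 0 .
\]
Thus $\{S_k\}$ is a martingale with respect to the filtration $\{\FF_{k-1}\}_{k\ge 0}$, and its increments are bounded by $\max(\log\gamma,-\log\theta)$. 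I then invoke the standard dichotomy for martingales with bounded increments (e.g.,~\cite[Theorem~4.3.1]{Durrett_2019}): almost surely, either $\lim_k S_k$ exists and is finite, or $\limsup_k S_k=+\infty$ and $\liminf_k S_k=-\infty$. The first alternative is impossible, because $|Z_k|\ge\min(\log\gamma,-\log\theta)>0$ on every sample path, so $\{S_k\}$ is never Cauchy. Hence $\limsup_k S_k=+\infty$ almost surely. This is the step I expect to carry the weight; the only care needed is stating the filtration correctly, since $S_k$ is predictable with respect to $\{\FF_k\}$ rather than adapted to it.

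Finally, I convert $\limsup_k S_k=+\infty$ into divergence of the series. On this event there are infinitely many record times $k$, meaning times with $S_{k+1}>\max_{j\le k}S_j$ and $S_{k+1}>0$. At any such time the step must be upward, so $Y_k=1$. Moreover, $S_k=S_{k+1}-\log\gamma>-\log\gamma$, which gives $Y_kU_k=e^{S_k}>1/\gamma$. Infinitely many terms of $\sum_k Y_kU_k$ therefore exceed $1/\gamma$, so the series equals $\infty$ almost surely.
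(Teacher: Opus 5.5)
Your proof is correct, but it takes a different route from the paper's.

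The paper also treats $\log U_k$ as a martingale with bounded increments with respect to $\{\FF_{k-1}\}$ and applies the same dichotomy from Durrett. However, it only extracts the weak consequence $\limsup_k \log U_k > -\infty$ almost surely. That holds under either alternative, so no case split on $\gamma$ is needed. The paper then takes an extra probabilistic step. Since $\{U_k \ge \varepsilon\} \in \FF_{k-1}$, one has $\Pr(Y_kU_k \ge \varepsilon \mid \FF_{k-1}) = (1-p_*)\,\ind(U_k \ge \varepsilon)$ with $1-p_*>0$. L\'evy's conditional Borel--Cantelli lemma then converts ``$U_k \ge \varepsilon$ infinitely often'' into ``$Y_kU_k \ge \varepsilon$ infinitely often'', up to a null set.

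You instead use the fact that for $\gamma>1$ the increments of $\log U_k$ are bounded away from zero. This rules out the finite-limit alternative and gives the stronger conclusion $\limsup_k \log U_k = +\infty$ almost surely. You then add the pathwise observation that $\log U$ rises at step $k$ exactly when $Y_k = 1$. Together these make the final step deterministic, so the conditional Borel--Cantelli lemma is not needed. Your argument even yields $\limsup_k Y_kU_k = \infty$, since $\log U_k = \log U_{k+1} - \log\gamma \to \infty$ along record times.

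The price is the separate (trivial) case $\gamma = 1$. You also rely on the exact coupling between $Y_k$ and the direction of the step in $U$. The paper's argument is more modular: it goes through whenever $\limsup_k U_k > 0$ almost surely and $\Pr(Y_k = 1 \mid \FF_{k-1})$ is bounded away from zero, however the first fact is obtained.
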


\subsection{The motivating examples revisited}
\label{ssec:revisit_examples}

\subsubsection{Numerical verification of the quantitative non-convergence result}
\label{sssec:empirical_rate}

In this subsection, we demonstrate the quantitative non-convergence result in
Theorem~\ref{thm:quantitative} numerically by a refined version of the experiment
in Subsection~\ref{sssec:numerical_illust}.

As an example, we will focus on the case
with~$\mu(x)=f(x)-\inf f$, which reduces the function~$\Psi$ defined in~\eqref{eq:Psi_def} to
\[
    \Psi(\zeta) \;=\; \Pr\Big(\inf_{k\ge 0} f(X_k) \,\ge\, (1-\zeta)\,f(x_0) + \zeta\, \inf \!f\Big).
\]
Theorem~\ref{thm:quantitative} shows that the tail of~$\Psi$ at~$0^+$ decays at a rate no faster
than~$\zeta^{{\log p}/{\log \theta}}$. Geometrically speaking, if we plot~$\Psi(\zeta)$
against~$\zeta$ on a log-log scale, the slope of the curve at~$0^+$ should be no more
than~${\log p}/{\log \theta}$,
which will be illustrated numerically by the following experiment.

We set up the objective function and the algorithm in the same way as in
Subsection~\ref{sssec:numerical_illust} except for the algorithmic parameters~$\theta$,~$\gamma$, and~$m$.
To ensure the representativeness of the results, instead of fixing~$(\theta, \gamma, m) = (1/4, 3/2, 2)$
as in Subsection~\ref{sssec:numerical_illust},
we randomly sample five values of the triple~$(\theta,\gamma,m)$ by the following scheme.
\begin{enumerate}
    \item Sample~$p_*$ and~$\theta$ uniformly from the intervals~$(0,9/20)$ and~$(1/4,3/4)$, respectively.
    \item Set~$\gamma = \theta^{p_* / (p_* - 1)}$ and~$m = \lfloor -\log_2 p_* - \mathrm{eps}\rfloor$,
        where~$\mathrm{eps}$ is the machine epsilon.
\end{enumerate}
This scheme ensures that inequality~\eqref{eq:mineq} holds. Hence, the function~$\Psi$ satisfies
inequality~\eqref{eq:pds_quantitative_rate} in Theorem~\ref{thm:quantitative}~(see
Corollary~\ref{cor:nonconv_pds}).

Given a sample of~$(\theta,\gamma,m)$, we perform~$N = 10^7$ independent runs of
Algorithm~\ref{alg:direct_search_r}.
The best (lowest) function value found in each run is denoted by~$f_{\text{best}}$. Then we define
\[
    \hat{\Psi}(\zeta) \;=\; \frac{1}{N} \cdot \big[\,\text{number of runs with}~f_{\text{best}}
    \,\ge\, (1-\zeta)\,f(x_0) + \zeta\,\inf\! f \,\big],
\]
which will be used as an empirical estimator for~$\Psi(\zeta)$. Note that~$\inf\!f = 0$ in our experiment.

Figure~\ref{fig:pds_rate} plots~$\log_{10}[\hat{\Psi}(\zeta)]/(\log p/\log\theta)$
against~$\log_{10} \zeta$,
with~$\zeta$ varying in~$[10^{-3}, 10^{-1}]$.
Each curve in the figure corresponds to a sample of~$(\theta,\gamma,m)$.
Since we are concerned with the slopes rather than the intercepts, the curves are vertically
shifted by small constants to separate them visually.
As a reference, the figure includes a black dashed line with slope~$1$.

\begin{figure}[htbp!]
  \centering
  \includegraphics[width=0.65\textwidth]{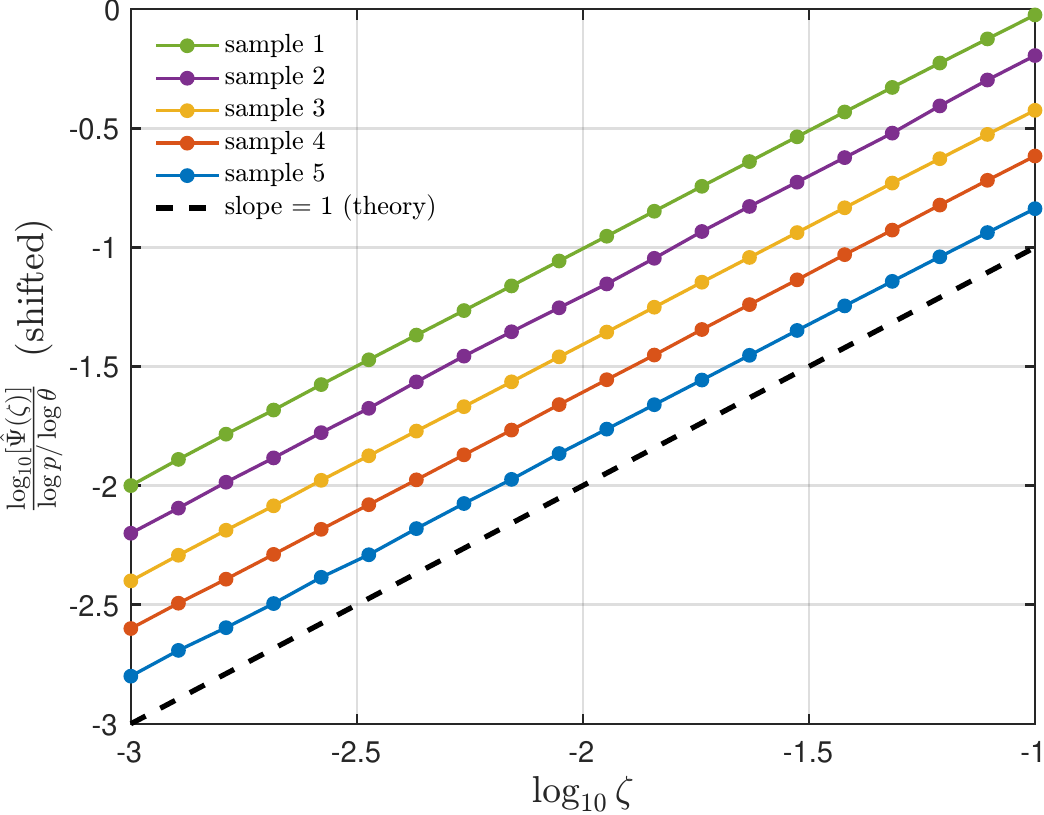}
  \caption{
      Curves of~$\log_{10}[\hat{\Psi}(\zeta)]/(\log p/\log\theta)$ versus~$\log_{10}\zeta$
      for five random samples of~$(\theta,\gamma,m)$.
      \mbox{The curves are vertically shifted for clarity. The dashed line is a reference
      line with slope~$1$.}
  }
  \label{fig:pds_rate}
\end{figure}

Admittedly, we have no guarantee about the quality of the
estimator~$\hat{\Psi}$ for~$\Psi$, and the interval~$[10^{-3}, 10^{-1}]$
is not necessarily a subset of the interval~$(0,\overline{\zeta})$ mentioned in
Theorem~\ref{thm:quantitative}.
Nevertheless, across all the samples, the curves in Figure~\ref{fig:pds_rate} are almost perfectly parallel to the reference line, which is
consistent with the rate in Theorem~\ref{thm:quantitative}. Indeed, the theorem only indicates that the
slopes of the curves are no more than that of the reference line.
The \emph{surprisingly perfect} parallelism strongly
motivates us to conjecture that the rate in the theorem is tight, which is an interesting topic
for future research.

\subsubsection{Explanations about Example~\ref{exp:simple_illustrate}}
\label{sssec:explain_example}

Now we explain Example~\ref{exp:simple_illustrate}.
Recall that~$n=1$ and
note that the~$\{\DD_k\}$ in this example is a particular case of the~$\{\DD_k\}$ specified
in Corollaries~\ref{cor:conv_pds} and~\ref{cor:nonconv_pds} with~$m = 1$.

When~$\theta\gamma \ge 1$, we have~$\gamma > 1$ and~$\log_2 (1 - \log \theta / \log \gamma) \le 1=m$.
According to Corollary~\ref{cor:conv_pds} and Remark~\ref{rem:complement}, it holds
that~$\Pr(\liminf_{k}|X_k|=0) = 1$.
Note that~$\liminf_k|X_k|=0$ is equivalent to~$X_k\to 0$ in this example due to the monotonicity of
Algorithm~\ref{alg:direct_search_r}.

When~$\theta\gamma < 1$, we have~$\gamma = 1$ or~$\log_2 (1 - \log \theta / \log \gamma) > 1 =m$.
Therefore, Corollary~\ref{cor:nonconv_pds} ensures~$\Pr(\inf_{k\ge 0} |X_k|> 0) > 0$
and yields the lower bound in~\eqref{eq:simple_illustrate}.%

\subsection{Non-convergence under a weaker condition}\label{ssec:weaker_ass}

Example~\ref{exp:converge_pds} shows that we cannot weaken our assumption in Theorem~\ref{thm:qualitative} by replacing~$p>p_*$ with~$p\ge p_*$.
However, it is indeed possible to relax the definition of
probabilistic ascent to obtain a weaker condition that renders a weaker non-convergence result
compared with Theorem~\ref{thm:qualitative}.

Consider Algorithm~\ref{alg:direct_search_r} with~$f$ being differentiable and convex on~$\RR^n$. In place of probabilistic ascent, this subsection assumes that~$\{\DD_k\}$ satisfies
\begin{equation}\label{eq:liminf-asm_D}
    \Pr\Big( \liminf_{k\to\infty} \left\{\Pr\left(\cm\left(\DD_k,-G_k\right) \le 0 \mid \FF_{k-1} \right) \ge p \ind(G_k \ne 0) \right\}\Big) \;>\; 0.
\end{equation}
According to Proposition~\ref{prop:prob-asc-def-equiv}, condition~\eqref{eq:liminf-asm_D} holds if
and only if the sequence~$\{Y_k\}$ defined in~\eqref{eq:define_Y_k} fulfills
\begin{equation}\label{eq:liminf-asm}
  \Pr\Big( \liminf_{k\to\infty} \left\{\Pr\left(Y_k = 0 \mid \FF_{k-1}\right) \ge p \right\} \Big) \;>\; 0.
\end{equation}

\begin{remark}\label{rem:weaker_assumption}
  Condition~\eqref{eq:liminf-asm} means that the event
  \begin{equation}\label{eq:liminf-asm_event}
    \left\{\Pr(Y_k = 0 \mid \FF_{k-1}) \ge p~\text{for all sufficiently large}~k\right\}
  \end{equation}
  occurs with positive probability. This is weaker than
  \begin{equation}\label{eq:relax-asm-1}
      \Pr\left( \bigcap_{k=0}^\infty \left\{\Pr\left(Y_k = 0 \mid \FF_{k-1}\right) \ge p \right\} \right) \;>\; 0,
    \end{equation}
   which means that the event~$\left\{\Pr(Y_k = 0 \mid \FF_{k-1}) \ge p ~\text{for each}~k\right\}$~happens with positive probability. Condition~\eqref{eq:liminf-asm} is also weaker than
    \begin{equation}\label{eq:relax-asm-2}
      \sum_{k=0}^\infty \Pr\left( \left\{\Pr\left(Y_k = 0 \mid \FF_{k-1}\right) < p \right\}\right) \;<\; \infty,
    \end{equation}
    which implies that the event~\eqref{eq:liminf-asm_event} occurs \as~by the Borel--Cantelli Lemma~\cite[Theorem~2.3.1]{Durrett_2019}.
\end{remark}

As stated in Lemma~\ref{lem:submartingale-Y_k}, $\{\DD_k\}$ is a sequence of~$p$-probabilistic ascent sets if and only if the sequence~$\{Y_k\}$ satisfies
$\Pr\left(Y_k = 0 \mid \FF_{k-1}\right) \ge p$ for each~$k\ge 0$, a condition stronger than~\eqref{eq:liminf-asm}.
Indeed, such a condition ensures both~\eqref{eq:relax-asm-1} and~\eqref{eq:relax-asm-2}, either of which in turn implies~\eqref{eq:liminf-asm} as discussed in Remark~\ref{rem:weaker_assumption}.
Therefore, condition~\eqref{eq:liminf-asm_D} can be regarded as a relaxation of~$p$-probabilistic ascent defined in Definition~\ref{def:prob-ascent}.

Before showing the non-convergence result under condition~\eqref{eq:liminf-asm_D}, we introduce Lemma~\ref{lem:series_con}, which will be proved based on Lemma~\ref{lem:non-iid-LLN}, a strong law of large numbers for martingales.

\begin{lemma}[\cite{Chow_1967}]\label{lem:non-iid-LLN}
  Let~$\{W_k\}$ be a martingale. If there exists an~$\alpha \ge 1$ such that
  \begin{equation*}
    \sum_{k=1}^\infty \expc\left( |W_k - W_{k-1}|^{2\alpha} \right) / k^{1+\alpha} < \infty,
  \end{equation*}
  then we have~$W_k / k \to 0$ \as
  In particular,~$W_k / k \to 0$ \as~if~$\{W_k\}$ has bounded increments.
\end{lemma}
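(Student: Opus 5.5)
The plan is to prove the lemma by a dyadic blocking argument. It combines a maximal moment inequality for martingales with Borel--Cantelli and a Toeplitz-type averaging. Write $D_k = W_k - W_{k-1}$ for the martingale differences, and assume without loss of generality that $W_0 = 0$; otherwise replace $W_k$ by $W_k - W_0$, which does not affect $W_k/k \to 0$. For each $j\ge 0$, consider the block $I_j = \{2^j+1,\dots,2^{j+1}\}$ and the block maximum
\[
    M_j \;=\; \max_{n\in I_j} \Big|\sum_{k=2^j+1}^{n} D_k\Big| .
\]
The goal is to show that $M_j/2^j \to 0$ almost surely. Once this holds, the full statement follows from a deterministic argument. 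For $n\in I_J$,
\[
    \frac{|W_n|}{n} \;\le\; \frac{|W_{2^0}|}{2^J} + \sum_{j=0}^{J-1} 2^{j-J}\,\frac{M_j}{2^j} + \frac{M_J}{2^J}.
\]
The middle term is a Toeplitz average of the sequence $M_j/2^j$ with weights $2^{j-J}$, which sum to at most $1$ and tend to $0$ for each fixed $j$. It therefore tends to $0$ whenever $M_j/2^j\to 0$, and the other two terms tend to $0$ trivially.

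To get $M_j/2^j\to0$ almost surely, I would show that $\sum_j \Pr(M_j > \varepsilon 2^j) < \infty$ for every $\varepsilon>0$ and then invoke Borel--Cantelli. Within a block, the partial sums $\sum_{k=2^j+1}^n D_k$ form a martingale in $n$. Doob's maximal inequality combined with Burkholder's square-function inequality, both valid for the exponent $2\alpha \ge 2$, gives
\[
    \Pr(M_j > \varepsilon 2^j) \;\le\; \frac{C_\alpha}{\varepsilon^{2\alpha} 2^{2\alpha j}}\, \expc\Big(\sum_{k\in I_j} D_k^2\Big)^{\alpha}.
\]
Hölder's inequality on the block, which has $2^j$ terms, gives
\[
    \Big(\sum_{k\in I_j} D_k^2\Big)^\alpha \;\le\; 2^{j(\alpha-1)}\sum_{k\in I_j}|D_k|^{2\alpha}.
\]
Hence
\[
    \Pr(M_j > \varepsilon 2^j) \;\le\; C_\alpha\varepsilon^{-2\alpha}\, 2^{-j(1+\alpha)}\sum_{k\in I_j}\expc|D_k|^{2\alpha} \;\le\; C_\alpha\varepsilon^{-2\alpha}\,2^{1+\alpha}\sum_{k\in I_j}\frac{\expc|D_k|^{2\alpha}}{k^{1+\alpha}},
\]
where the last step uses $k\le 2^{j+1}$ on $I_j$. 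Summing over $j$ and using the hypothesis $\sum_{k=1}^\infty \expc(|D_k|^{2\alpha})/k^{1+\alpha}<\infty$ yields the required summability.

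The step I expect to need the most care is the moment estimate on each block. A naive route fails: Kronecker's lemma applied to $\sum D_k/k$ with the estimate $\expc D_k^2 \le (\expc|D_k|^{2\alpha})^{1/\alpha}$ leads to a borderline divergent harmonic sum. Grouping into dyadic blocks before applying Hölder is exactly what removes this loss, and it relies on the Burkholder inequality for $2\alpha\ge2$, which I would cite as a standard tool. The special case of bounded increments, $|D_k|\le c$, needs no further work. With $\alpha=1$, one has $\sum_k \expc(D_k^2)/k^2 \le c^2\sum_k k^{-2}<\infty$, so the hypothesis holds. In that case Doob's $L^2$ inequality already suffices, and Burkholder is not needed.
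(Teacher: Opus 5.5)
The paper gives no proof of this lemma. It is imported from~\cite{Chow_1967}, and only its bounded-increment case is ever used: in the proof of Lemma~\ref{lem:series_con}, the increments $Y_\ell+P_\ell-1$ lie in $[-1,1]$. So there is no internal argument to compare against.

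Your dyadic-block argument is a correct, self-contained proof, relying only on Doob's maximal inequality, Burkholder's inequality, and Borel--Cantelli. The details check out:
\begin{itemize}
\item Applying Doob to the submartingale $|W_n-W_{2^j}|^{2\alpha}$ needs it to be integrable. This holds because the hypothesis forces $\expc|D_k|^{2\alpha}<\infty$ for every $k$.
\item The exponent bookkeeping is right: $2^{-2\alpha j}\cdot 2^{j(\alpha-1)}=2^{-j(1+\alpha)}$, and $2^{-j(1+\alpha)}\le 2^{1+\alpha}k^{-(1+\alpha)}$ on $I_j$.
\item The deterministic Toeplitz step passing from $M_j/2^j\to 0$ to $W_n/n\to 0$ is sound.
\end{itemize}

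Compared with citing the result, your route shows exactly where $\alpha\ge 1$ enters: in the power-mean bound and in Burkholder with exponent $2\alpha\ge 2$. It also replaces a weighted H\'ajek--R\'enyi-type maximal inequality, the other standard way to reach the series $\sum_k k^{-1-\alpha}\expc|D_k|^{2\alpha}$, with the plain Doob inequality plus blocking.

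One clarification: your remark that the naive route fails is true only for $\alpha>1$. For $\alpha=1$, which is all the paper needs, the hypothesis says precisely that the martingale $\sum_{k\le n}D_k/k$ is bounded in $L^2$, by orthogonality of increments. So it converges almost surely, and Kronecker's lemma gives $W_n/n\to 0$ directly, with no blocking and no Burkholder.
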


\begin{lemma}\label{lem:series_con}
  If~$p > p_*$, then condition~\eqref{eq:liminf-asm} implies that
    \begin{equation}\label{eq:series_convergence_result}
      \Pr\left( \sum_{k=0}^{\infty} U_k < \infty \right) \;>\; 0.
    \end{equation}
\end{lemma}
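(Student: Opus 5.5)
The plan is to show that, on the event in~\eqref{eq:liminf-asm_event} (which has positive probability), the empirical frequency $\overline{Y}_k$ is eventually below $1-q$ for some $q\in(p_*,p)$. This makes $U_k^{1/k}=\gamma^{\overline{Y}_k}\theta^{1-\overline{Y}_k}$ eventually bounded by $\gamma^{1-q}\theta^{q}<1$, and the series $\sum_k U_k$ then converges by the root test on that event.

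To control $\overline{Y}_k$ I will use the martingale $W_k=\sum_{\ell=0}^{k-1}\big(Y_\ell-\Pr(Y_\ell=1\mid\FF_{\ell-1})\big)$ with $W_0=0$. It is a martingale with respect to $\{\FF_{k-1}\}$ because $Y_\ell$ is $\FF_\ell$-measurable, and its increments are bounded by $1$. Lemma~\ref{lem:non-iid-LLN} then gives $W_k/k\to0$ almost surely. Writing $\pi_\ell=\Pr(Y_\ell=1\mid\FF_{\ell-1})$, we get $\overline{Y}_k=W_k/k+\frac1k\sum_{\ell<k}\pi_\ell$.

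Now restrict to the intersection of the full-measure event $\{W_k/k\to0\}$ with the event $\Lambda$ in~\eqref{eq:liminf-asm_event}. This intersection has probability $\Pr(\Lambda)>0$ by~\eqref{eq:liminf-asm}. On it there is a (random) $K$ such that $\pi_\ell\le1-p$ for all $\ell\ge K$. Hence $\frac1k\sum_{\ell<k}\pi_\ell\le K/k+(1-p)$, and so $\limsup_k\overline{Y}_k\le1-p<1-q$ with $q=(p+p_*)/2$. It follows that $\overline{Y}_k<1-q$ for all large $k$, giving $U_k<(\gamma^{1-q}\theta^q)^k$ eventually, exactly as in~\eqref{eq:U_k_sum_bound}. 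Since $\gamma^{1-p_*}\theta^{p_*}=1$ and $q>p_*$, the ratio is $<1$, so $\sum_kU_k<\infty$ on this event, which proves~\eqref{eq:series_convergence_result}.

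The main obstacle is a technicality: conditional probabilities are defined only almost surely, so the event $\Lambda$ must be defined using fixed versions of the $\pi_\ell$. Any choice of versions changes $\Lambda$ only by a null set, so the argument is unaffected. I also need to check that $W_k$ is adapted to $\FF_{k-1}$, which holds since each $\pi_\ell$ is $\FF_{\ell-1}$-measurable. The remaining steps are routine.
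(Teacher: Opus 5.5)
Your proposal is correct and follows essentially the same route as the paper. Both use the bounded-increment martingale $\sum_{\ell<k}(Y_\ell+P_\ell-1)$ (your $W_k$, with $\pi_\ell=1-P_\ell$), Chow's strong law via Lemma~\ref{lem:non-iid-LLN}, and the root test on $U_k^{1/k}=\gamma^{\overline{Y}_k}\theta^{1-\overline{Y}_k}$; the only cosmetic difference is that you bound the Ces\`aro average of $\pi_\ell$ pathwise on the event~\eqref{eq:liminf-asm_event}, whereas the paper passes through the almost sure identity $\limsup_k\overline{Y}_k+\liminf_k\overline{P}_k=1$.
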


\begin{proof}
    By the root test, the series~$\sum_{k=0}^\infty U_k$ converges if~$\limsup_{k} U_k^{1/k} < 1$.
    Recalling the definitions of~$U_k$ and~$\overline{Y}_k$~in~\eqref{eq:YUE}
    as well as the fact that~$p_* = (\log \gamma)/\log (\theta^{-1}\gamma)$, we have
    \begin{equation}\label{eq:log_U_k}
      \log\left(U_k^{1/k}\right) \;=\; \log\left(\gamma^{\overline{Y}_k}\theta^{1 - \overline{Y}_k} \right) \;=\; \log\theta + \overline{Y}_k \log(\theta^{-1}\gamma) \;=\; [(p_* - 1) + \overline{Y}_k] \log(\theta^{-1}\gamma).
    \end{equation}
    Hence, it holds that
    \begin{equation}
        \label{eq:Y_k_limsup}
      \left\{\sum_{k=0}^{\infty} U_k < \infty\right\}
      \;\supseteq\; \left\{\limsup_{k\to\infty} \log\left(U_k^{1/k}\right) < 0\right\}
      \;=\; \left\{\limsup_{k\to\infty} \overline{Y}_k < 1 - p_*\right\},
    \end{equation}
    where the last step uses equality~\eqref{eq:log_U_k} and~$\log(\theta^{-1}\gamma)>0$.
    Therefore, by our assumption that~$p > p_*$, inequality~\eqref{eq:series_convergence_result}
    can be established by proving
    \begin{equation}\label{eq:limsup-need-to-show}
      \Pr\left( \limsup_{k\to\infty} \overline{Y}_k \le 1 - p  \right) \;>\; 0.
    \end{equation}

    To prove~\eqref{eq:limsup-need-to-show}, let us define
    \begin{equation*}
      P_k \;=\; \Pr\left(Y_k = 0 \mid \FF_{k-1}\right) \quad \text{for each } k\ge 0.
    \end{equation*}
    Then~$\expc(Y_k + P_k - 1 \mid \FF_{k-1}) = 0$ for each~$k$,
    so~$\big\{\sum_{\ell=0}^{k-1} (Y_\ell + P_\ell - 1)\big\}$ is a martingale with respect
    to~$\{\FF_{k-1}\}$. In addition, this martingale has bounded increments. Thus, Lemma~\ref{lem:non-iid-LLN} leads to
    \begin{equation*}
      \lim_{k\to\infty} \left(\overline{Y}_k + \overline{P}_k - 1\right) = 0 \quad \text{\as},
    \end{equation*}
    where~$\overline{P}_k = k^{-1}\sum_{\ell=0}^{k-1} P_\ell$.
    Hence,\footnote{\,Recall
    that~$\limsup_k a_k + \liminf_k b_k = \lim_k(a_k +b_k)$ for bounded real
    sequences~$\{a_k\}$ and~$\{b_k\}$ when the limit on the right-hand side exists.}
    we have~$\limsup_{k} \overline{Y}_k + \liminf_{k} \overline{P}_k=1$~\as, implying
    \begin{equation}\label{eq:limsup-need-to-show-2}
      \begin{aligned}
          \Pr\left( \limsup_{k\to\infty} \overline{Y}_k \,\le\, 1 - p  \right)
          \;=\; \Pr\left( \liminf_{k\to\infty} \overline{P}_k \,\ge\, p \right)
          \;\ge\; \Pr\left(\liminf_{k\to\infty} \,\{P_k \ge p \}\right).
      \end{aligned}
    \end{equation}
    The right-hand side of~\eqref{eq:limsup-need-to-show-2} is precisely the probability
    in condition~\eqref{eq:liminf-asm} and hence is positive.
    Therefore, inequality~\eqref{eq:limsup-need-to-show} is justified and
    the proof is complete.
\end{proof}

\begin{remark}\label{rem:stronger_assumption}
  In comparison with~\eqref{eq:series_convergence_result}, condition~\eqref{eq:Y_k-condition} with~$p>p_*$ implies
  \begin{equation}\label{eq:series_convergence_result_stronger}
    \Pr\left( \sum_{k=0}^{\infty} U_k < \infty \right) \;=\; 1.
  \end{equation}
  The proof is similar to that of Lemma~\ref{lem:series_con}. The major difference is
  that~\eqref{eq:Y_k-condition} ensures that the right-hand side
  of~\eqref{eq:limsup-need-to-show-2} equals~$1$, and hence~\eqref{eq:series_convergence_result_stronger}
  holds according to~\eqref{eq:Y_k_limsup}.
  In addition, by Proposition~\ref{prop:cdf}, condition~\eqref{eq:Y_k-condition} with~$p>p_*$
  implies that~$\Pr(\sum_{k=0}^\infty U_k < s) > 0$ for all~$s > 1/(1-\theta)$,
  whereas~\eqref{eq:series_convergence_result} only ensures that~$\Pr(\sum_{k=0}^\infty U_k < s) > 0$
  for some sufficiently large~$s$.
\end{remark}

Now, we are ready to present the non-convergence result under the weaker
condition~\eqref{eq:liminf-asm_D}. Its proof is similar to that of Theorem~\ref{thm:qualitative} with the help of Lemma~\ref{lem:series_con}.

\begin{theorem}\label{thm:tight}
    Consider Algorithm~\ref{alg:direct_search_r} with~$f$ being differentiable and convex on~$\RR^n$. If~$\{\DD_k\}$~satisfies~\eqref{eq:liminf-asm_D} with~$p>p_*$, then there exists a positive constant~$\zeta$~such that
    \begin{equation}
        \label{eq:weak_nonc}
        \Pr\left(\gap(\{X_k\},\sol(f))>0\right) \;>\; 0,
    \end{equation}
    provided that~$\gap(x_0,\sol(f)) \ge \zeta$.
\end{theorem}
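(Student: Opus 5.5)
The plan is to follow the framework of Subsection~\ref{ssec:key_ingredients}, with Lemma~\ref{lem:series_con} replacing Proposition~\ref{prop:cdf}. By Proposition~\ref{prop:prob-asc-def-equiv}, condition~\eqref{eq:liminf-asm_D} is equivalent to~\eqref{eq:liminf-asm}. Since $p>p_*$, Lemma~\ref{lem:series_con} then gives $\Pr(\sum_{k=0}^\infty U_k<\infty)>0$.

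The first step is to pass from almost-sure finiteness to a deterministic bound. The events $\{\sum_{k=0}^\infty U_k < s\}$ increase with $s$ to $\{\sum_{k=0}^\infty U_k<\infty\}$. By continuity of probability, we can therefore fix a finite $s>0$ with
\[
\Pr\Big(\sum_{k=0}^\infty U_k < s\Big)>0 .
\]
We then set $\zeta=\alpha_0 s$. This constant depends only on the law of $\{U_k\}$, which is determined by $p$, $\theta$, $\gamma$ and the directions, and not on $x_0$ beyond the fixed $\alpha_0$.

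The second step mirrors the proof of Theorem~\ref{thm:loose}. Suppose $\gap(x_0,\sol(f))\ge\zeta$. On the event $\{\sum_{k=0}^\infty U_k<s\}$, Lemma~\ref{lem:X_k-to-U_k} gives $\sup_k\|X_k-x_0\|\le\alpha_0\sum_{k=0}^\infty Y_kU_k\le \alpha_0\sum_{k=0}^\infty U_k<\zeta$. To conclude positivity of the gap, a strict bound on the supremum is not enough; we need every $X_k$ to stay in a ball of radius strictly less than $\zeta$. For this I would use~\eqref{eq:bound_X_k}: for every $k$,
\[
\|X_k-x_0\|\le\alpha_0\sum_{\ell=0}^{\infty}U_\ell=:r<\zeta ,
\]
so all iterates lie in the closed ball $\overline{\BB}(x_0,r)$. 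Every point of $\sol(f)$ is at distance at least $\zeta$ from $x_0$. By the triangle inequality, $\gap(\{X_k\},\sol(f))\ge \zeta-r>0$ on this event. Hence $\Pr(\gap(\{X_k\},\sol(f))>0)\ge\Pr(\sum_{k=0}^\infty U_k<s)>0$, which is~\eqref{eq:weak_nonc}. The case $\sol(f)=\emptyset$ is trivial because the gap is then infinite.

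The only delicate points are the two already handled: choosing a finite threshold $s$ from mere finiteness with positive probability, and using the uniform bound over $k$ so that the gap is strictly positive. The latter holds because $r$ is a single random radius bounding all iterates. The heavy lifting, namely the martingale strong law behind Lemma~\ref{lem:series_con}, is already done, so I expect no real obstacle beyond these bookkeeping steps.
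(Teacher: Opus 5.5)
Your proof is correct and is essentially the paper's argument. The paper likewise takes from Lemma~\ref{lem:series_con} a finite threshold with $\Pr(\sum_{k=0}^\infty U_k<\zeta/\alpha_0)>0$ (your continuity-of-probability step is the implicit justification) and then uses $\{\gap(\{X_k\},\sol(f))>0\}\supseteq\{\sup_{k\ge 0}\|X_k-x_0\|<\zeta\}\supseteq\{\sum_{k=0}^\infty U_k<\zeta/\alpha_0\}$. Two side remarks of yours are inaccurate but unused: a strict bound on $\sup_{k}\|X_k-x_0\|$ \emph{is} enough, since it directly gives $\gap(\{X_k\},\sol(f))\ge\zeta-\sup_k\|X_k-x_0\|>0$; and $\zeta$ need not be independent of $x_0$, because the law of $\{Y_k\}$ depends on $G_k=\nabla f(X_k)$ and hence on the trajectory started at $x_0$.
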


\begin{proof}
  By Lemma~\ref{lem:series_con}, there exists a positive constant~$\zeta$~such that
  \begin{equation}
      \label{eq:weak_sumu}
    \Pr\left(\sum_{k=0}^\infty U_k < \frac{\zeta}{\alpha_0}\right) \;>\; 0.
  \end{equation}
  Meanwhile, when~$\gap(x_0,\sol(f)) \ge \zeta$, we have
  \[
      \left\{ \gap(\{X_k\},\sol(f))> 0 \right\}
      \;\supseteq\;
      \left\{ \sup_{k\ge 0} \|X_k -x_0\| < \zeta \right\}
      \;\supseteq\;
      \left\{ \sum_{k=0}^\infty U_k < \frac{\zeta}{\alpha_0} \right\},
  \]
  where the last inclusion is due to Lemma~\ref{lem:X_k-to-U_k}. Therefore,~\eqref{eq:weak_nonc}
  holds according to~\eqref{eq:weak_sumu}.
\end{proof}

\section{Extension to the nonsmooth case}\label{sec:nonsmooth}

In this section, we extend our non-convergence results to the nonsmooth case, assuming
that~$f\mathrel{:} \RR^n\to\RR$ is only convex but not necessarily differentiable. We will show that
the non-convergence theorems in Section~\ref{sec:nonconvergence} still hold if we
generalize Definition~\ref{def:prob-ascent} of probabilistic ascent
to Definition~\ref{def:prob-ascent-nonsmooth} below.

We use~$f^\circ(\,\cdot\,;d)$~to denote the Clarke generalized directional derivative
of~$f$ in a direction~$d\in\RR^n$, and~$\clarke f$~to denote the Clarke subdifferential
of~$f$~(see~\cite[Definitions~1.1 and~1.3]{Clarke_1975}).
When~$f$ is convex, they reduce to the usual (one-sided) directional derivative and
the subdifferential, respectively~\cite[Proposition~2.2.7]{Clarke_1990}, but we do not need
convexity until Theorem~\ref{thm:nonsmooth}.

\begin{definition}\label{def:prob-ascent-nonsmooth}
    Let $p \in [0,1]$.
    Consider Algorithm~\ref{alg:direct_search_r} with~$f$ being locally Lipschitz continuous on~$\RR^n$.
    The sequence~$\{\DD_k\}$~is said to be a sequence of~$p$-probabilistic ascent sets if it satisfies
  \begin{equation}\label{eq:prob-asc-def-nonsmooth}
    \Pr\Big(\min_{\ddd\in\DD_k} f^\circ(X_k;\ddd) \ge 0 \Mid \FF_{k-1} \Big)
    \;\ge\; p \quad \text{for each~$k\ge 0$}.
  \end{equation}
\end{definition}

\begin{remark}
  When~$f$ is continuously differentiable on~$\RR^n$, Definition~\ref{def:prob-ascent-nonsmooth}
  is equivalent to Definition~\ref{def:prob-ascent}. See Proposition~\ref{prop:prob-asc-def-equiv}
  and Remark~\ref{rem:prob_asc_alt} for details.
\end{remark}

Proposition~\ref{prop:prob-asc_nonsmooth} extends Proposition~\ref{prop:prob-asc}
to the nonsmooth case.
See Appendix~\ref{app:proofs} for its proof.

\begin{proposition}\label{prop:prob-asc_nonsmooth}
    Consider Algorithm~\ref{alg:direct_search_r} with~$f$ being locally Lipschitz continuous on~$\RR^n$.
    The sequence~$\{\DD_k\}$ specified in Proposition~\ref{prop:prob-asc} is a sequence
    of~$p$-probabilistic ascent sets as defined in Definition~\ref{def:prob-ascent-nonsmooth}
    with~$p = 2^{-m}$.
\end{proposition}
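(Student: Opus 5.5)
The plan is to show, for each fixed $k$, that conditionally on $\FF_{k-1}$ the event $\{\min_{\ddd\in\DD_k} f^\circ(X_k;\ddd)\ge 0\}$ has probability at least $2^{-m}$. Since $\DD_k$ consists of $m$ \iid\ uniform unit vectors independent of $\FF_{k-1}$, while $X_k$ is $\FF_{k-1}$-measurable, I would freeze $X_k=x$. The conditional probability then becomes $\Pr(\min_{i} f^\circ(x;\ddd_i)\ge 0)$ evaluated at $x=X_k$, with $\ddd_1,\dots,\ddd_m$ \iid\ uniform on the sphere. This freezing step is the standard ``independence lemma'' for conditional expectations, applied to the jointly measurable map $(x,D)\mapsto \ind(\min_{\ddd\in D} f^\circ(x;\ddd)\ge 0)$.

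Next I would bound $\Pr(f^\circ(x;\ddd)\ge 0)\ge 1/2$ for a single uniform $\ddd$ and any fixed $x$. The key fact is that the Clarke directional derivative is subadditive and positively homogeneous. In particular, $0=f^\circ(x;0)\le f^\circ(x;\ddd)+f^\circ(x;-\ddd)$, so for every $\ddd$ at least one of $f^\circ(x;\ddd)$ and $f^\circ(x;-\ddd)$ is nonnegative. Write $S=\{\ddd\in\text{sphere}: f^\circ(x;\ddd)\ge 0\}$. Then $S\cup(-S)$ is the whole sphere, and the symmetry $\ddd\mapsto-\ddd$ of the uniform measure gives $2\Pr(\ddd\in S)\ge 1$. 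Independence of the $m$ directions then gives $\Pr(\min_i f^\circ(x;\ddd_i)\ge 0)=\Pr(\ddd\in S)^m\ge 2^{-m}$.

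The only technical point, and the main obstacle, is measurability. First, $S$ must be Borel. This holds because $f^\circ(x;\cdot)$ is Lipschitz (hence continuous) in the direction, and it is in fact upper semicontinuous jointly in $(x,\ddd)$ by Clarke's results. Joint upper semicontinuity makes $\{(x,\ddd): f^\circ(x;\ddd)\ge 0\}$ closed, so the composite indicator is measurable, which is exactly what the freezing argument needs. I would cite Clarke's upper semicontinuity of $(x,d)\mapsto f^\circ(x;d)$ for this. With these pieces in place, \eqref{eq:prob-asc-def-nonsmooth} holds with $p=2^{-m}$ almost surely, mirroring the proof of Proposition~\ref{prop:prob-asc}; there the role of subadditivity was played by linearity of $\ddd\mapsto\ddd^\trs G_k$.
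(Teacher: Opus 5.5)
Your proof is correct. It shares the paper's skeleton: freeze $X_k=x$ using the independence of $\DD_k$ from $\FF_{k-1}$ (the paper's Corollary~\ref{cor:calculate_probability}), bound the probability at a fixed point, then use independence of the $m$ directions. The difference is in the per-point bound.

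The paper picks a subgradient $g\in\clarke f(x)$ and uses the linear minorant $f^\circ(x;\ddd)\ge \ddd^\trs g$. This holds because $f^\circ(x;\cdot)$ is the support function of $\clarke f(x)$. It gives the inclusion $\{\min_{\ddd\in\DD_k}\ddd^\trs g\ge 0\}\subseteq\{\min_{\ddd\in\DD_k}f^\circ(x;\ddd)\ge 0\}$, so the bound $2^{-m}$ is inherited directly from the smooth case, Proposition~\ref{prop:prob-asc}.

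You instead work intrinsically with sublinearity. The inequality $f^\circ(x;\ddd)+f^\circ(x;-\ddd)\ge f^\circ(x;0)=0$ shows that $S\cup(-S)$ is the whole sphere. Invariance of the direction law under $\ddd\mapsto-\ddd$ then gives $\Pr(\ddd\in S)\ge 1/2$.

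What each route buys:
\begin{itemize}
\item The paper's route is a one-line reduction to an already-proved result. It depends on nonemptiness of the Clarke subdifferential and the support-function representation, which are themselves consequences of sublinearity via Hahn--Banach.
\item Your route avoids subgradients entirely and uses only the elementary inequality $f^\circ(x;\ddd)+f^\circ(x;-\ddd)\ge 0$.
\item Both routes use nothing about the uniform distribution beyond antipodal symmetry.
\end{itemize}

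Your explicit measurability argument, via joint upper semicontinuity of $(x,\ddd)\mapsto f^\circ(x;\ddd)$, matches the paper's remark in Section~\ref{sec:nonsmooth}. It is exactly what is needed for Corollary~\ref{cor:calculate_probability} to apply.
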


The~$0$-$1$ process~$\{Y_k\}$ defined in~\eqref{eq:define_Y_k} is fundamental
to our non-convergence analysis in the smooth case.
We now extend the definition of~$\{Y_k\}$ from~\eqref{eq:define_Y_k} to
\begin{equation}\label{eq:define_Y_k_nonsmooth}
    Y_k \;=\; \ind\!\left(\min_{\ddd\in\DD_k} f^\circ(X_k;\ddd) < 0\right)
    \quad \text{for each } k\ge 0.
\end{equation}
As in the smooth case,~$Y_k$ is~$\FF_k$-measurable for each~$k\ge 0$, because
the mapping~$(x,d)\mapsto f^\circ(x;d)$ is upper
semicontinuous~\cite[Proposition~2.1.1~(b)]{Clarke_1990} and hence Borel measurable.
It is clear that~$\{\DD_k\}$ is a sequence of~$p$-probabilistic ascent sets if and only
if~$\Pr(Y_k = 0 \mid \FF_{k-1}) \ge p$ for each~$k\ge 0$.
In other words,
Lemma~\ref{lem:submartingale-Y_k} remains valid under the new definition of~$\{Y_k\}$.

Now, we extend our non-convergence results in
Theorems~\ref{thm:qualitative} and~\ref{thm:quantitative} to the nonsmooth case.

\begin{theorem}\label{thm:nonsmooth}
  With~probabilistic ascent defined in Definition~\ref{def:prob-ascent-nonsmooth},
  Theorems~\ref{thm:qualitative} and~\ref{thm:quantitative} still hold if we remove the
  differentiability assumption about~$f$ and replace~$\|\nabla f(\cdot)\|$
  with~$\gap(0,\clarke f(\cdot))$.
\end{theorem}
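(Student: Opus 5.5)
The plan is to re-run the proofs of Theorems~\ref{thm:qualitative} and~\ref{thm:quantitative} verbatim and observe that differentiability entered only through Lemma~\ref{lem:X_k-to-U_k} and Lemma~\ref{lem:submartingale-Y_k}. The results of Subsection~\ref{ssec:lemmas} (Lemmas~\ref{lem:strong_prob_exp_des} and~\ref{lem:shift}, Propositions~\ref{prop:cdf} and~\ref{prop:rate_of_CDF_YU}) depend only on condition~\eqref{eq:Y_k-condition} and on the $\FF_k$-measurability of $Y_k$, not on the algorithm. For the nonsmooth $Y_k$ in~\eqref{eq:define_Y_k_nonsmooth}, measurability has already been noted via upper semicontinuity of $f^\circ$. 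Condition~\eqref{eq:Y_k-condition} is precisely Definition~\ref{def:prob-ascent-nonsmooth}. So Proposition~\ref{prop:rate_of_CDF_YU} applies unchanged, and the only genuine work is a nonsmooth version of Lemma~\ref{lem:X_k-to-U_k}.

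For that lemma, I only need the key implication: if $Y_k=0$, then the sufficient decrease condition~\eqref{eq:sufficient_decrease}, and even the simple decrease condition~\eqref{eq:simple_decrease}, fails for every $\ddd\in\DD_k$. So I need $f(X_k+\alpha\ddd)\ge f(X_k)$ whenever $f^\circ(X_k;\ddd)\ge 0$ and $\alpha>0$. This is where convexity is used. For convex $f$, $f^\circ(x;d)=f'(x;d)$ by \cite[Proposition~2.2.7]{Clarke_1990}. The difference quotient $t\mapsto [f(x+td)-f(x)]/t$ is nondecreasing on $(0,\infty)$, so $f(x+\alpha d)-f(x)\ge \alpha f'(x;d)\ge 0$. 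Hence $Y_k=0$ forces $X_{k+1}=X_k$ and $A_{k+1}=\theta A_k$. The rest of the proof of Lemma~\ref{lem:X_k-to-U_k}, namely~\eqref{eq:change_X_k}--\eqref{eq:bound_X_k}, uses only the unit-length blanket assumption, so~\eqref{eq:X_k-to-U_k} holds for convex, possibly nonsmooth $f$.

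With this in hand, the proofs of Theorems~\ref{thm:qualitative} and~\ref{thm:quantitative} go through word for word. They use only lower semicontinuity (respectively local Lipschitz continuity at $x_0$) of $\mu$, Lemma~\ref{lem:X_k-to-U_k}, and Proposition~\ref{prop:rate_of_CDF_YU}.

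The last step is the ``in particular'' clause with $\mu=\gap(0,\clarke f(\cdot))$. Here I must check lower semicontinuity, and that $\mu(x_0)>0$ when $x_0$ is nonstationary. By convention, nonstationary means $0\notin\clarke f(x_0)$, and $\clarke f(x_0)$ is compact, so $\mu(x_0)>0$. For lower semicontinuity, take $x_j\to x$ and $g_j\in\clarke f(x_j)$ with $\|g_j\|=\mu(x_j)$, which is attained by compactness. The subdifferential is locally bounded, so a subsequence has $g_j\to g$, and outer semicontinuity (closed graph) of $\clarke f$ \cite[Proposition~2.1.5]{Clarke_1990} gives $g\in\clarke f(x)$. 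Therefore $\mu(x)\le\liminf_j\mu(x_j)$. The other two measures, $f-\inf f$ and $\gap(\cdot,\sol(f))$, are continuous, and for convex $f$ nonstationarity of $x_0$ gives $x_0\notin\sol(f)$, so both are positive at $x_0$.

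I expect the main obstacle to be the quantitative part for $\mu=\gap(0,\clarke f(\cdot))$. This $\mu$ need not be locally Lipschitz at $x_0$ (it can jump at kinks). However, Theorem~\ref{thm:quantitative} assumes Lipschitz continuity of $\mu$ as a hypothesis, so the extension only claims it under that hypothesis, and no extra work is needed beyond noting this.
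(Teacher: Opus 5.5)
Your proposal is correct and follows essentially the same route as the paper, whose proof likewise re-checks only Lemma~\ref{lem:submartingale-Y_k} (immediate from the new definition of $Y_k$) and Lemma~\ref{lem:X_k-to-U_k} (convexity rules out any decrease when $Y_k=0$). It then notes that the remaining lemmas and Propositions~\ref{prop:cdf}--\ref{prop:rate_of_CDF_YU} depend only on condition~\eqref{eq:Y_k-condition} and the $\FF_k$-measurability of $Y_k$. Your difference-quotient argument and your closed-graph/local-boundedness proof that $\gap(0,\clarke f(\cdot))$ is lower semicontinuous simply spell out what the paper states briefly or obtains from Rockafellar's outer-semicontinuity result in Lemma~\ref{lem:optimality_measure}.
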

\begin{proof}
  We only need to verify that Lemmas~\mbox{\ref{lem:submartingale-Y_k}--\ref{lem:shift}} and
  Propositions~\mbox{\ref{prop:cdf}--\ref{prop:rate_of_CDF_YU}} remain valid
  in the new setting.
  Lemma~\ref{lem:submartingale-Y_k} holds as mentioned above.
  For Lemma~\ref{lem:X_k-to-U_k}, we note that the convexity of~$f$ makes it impossible
  to satisfy the descent condition in Algorithm~\ref{alg:direct_search_r} when~$Y_k=0$,
  leading to inequalities~\mbox{\eqref{eq:change_X_k}--\eqref{eq:alpha}}, which in turn validate
  Lemma~\ref{lem:X_k-to-U_k}.
  Lemmas~\mbox{\ref{lem:strong_prob_exp_des}--\ref{lem:shift}} and
  Propositions~\mbox{\ref{prop:cdf}--\ref{prop:rate_of_CDF_YU}} are still true because they
  only rely on Lemma~\ref{lem:submartingale-Y_k} and the~$\FF_{k}$-measurability of~$Y_k$.
  The proof is complete.
\end{proof}

\begin{remark}
    \label{rem:optimality_measure}
    Theorem~\ref{thm:nonsmooth} allows the lower semicontinuous
    function~$\mu$ in~\eqref{eq:non-convergence_optimality} to
    be~$f(\cdot) - \inf f$,
    $\gap(\,\cdot\,,\sol(f))$, and~$\gap(0,\clarke f(\cdot))$.
    The lower semicontinuity of~$f(\cdot) - \inf f$ and~$\gap(\,\cdot\,, \sol(f))$ are trivial.
    That of~$\gap(0,\clarke f(\cdot))$ is also basic, but we present it as
    Lemma~\ref{lem:optimality_measure} for completeness.
\end{remark}

Theorem~\ref{thm:loose} also holds in the nonsmooth case since it is weaker than
Theorem~\ref{thm:qualitative}.
As for Theorem~\ref{thm:tight}, we can extend it to the nonsmooth case
by replacing condition~\eqref{eq:liminf-asm_D} with
\begin{equation}
\label{eq:liminf_asm_D_nonsmooth}
\Pr\left(\liminf_{k\to\infty}
    \Big\{\Pr\Big(\min_{\ddd\in\DD_k} f^\circ(X_k;\ddd) \ge 0 \Mid \FF_{k-1} \Big) \ge p \Big\}
\right) \;>\;  0,
\end{equation}
which is equivalent to condition~\eqref{eq:liminf-asm} after we switch the definition
of~$\{Y_k\}$ to~\eqref{eq:define_Y_k_nonsmooth}.
Lemma~\ref{lem:series_con} still holds, leading to an analogue of
Theorem~\ref{thm:tight} with condition~\eqref{eq:liminf-asm_D} changed to~\eqref{eq:liminf_asm_D_nonsmooth}.

Hence, we have extended our non-convergence theory of PDS to the nonsmooth case.

\section{Conclusions and perspectives}\label{sec:conclusion}

We have established the non-convergence theory of PDS~(Algorithm~\ref{alg:direct_search_r}).
For convex objectives, if the polling direction sets satisfy the~$p$-probabilistic ascent
condition~(Definition~\ref{def:prob-ascent}) with
\[
   p \;>\; p_* \;=\;  \frac{\log \gamma}{\log(\theta^{-1}\gamma)},
\]
then the iterates of PDS remain bounded away from the solution set with positive
probability unless the starting point is a solution~(Theorem~\ref{thm:qualitative}).
More significantly, we provide a lower bound on this probability~(Theorem~\ref{thm:quantitative}),
and our numerical experiments suggest that the bound is sharp.

For the typical implementation of PDS, where each polling set consists of~$m$~\iid~random
directions uniformly distributed on the unit sphere
with no dependence on existing polling directions or iterates,
the above result implies that
the algorithm is not globally convergent for convex objectives if~$\gamma=1$
or~$m<\log_2\left(1-{\log\theta}/{\log\gamma}\right)$, which is the
opposite of the known convergence condition unless~$m = \log_2(1 - \log\theta/\log\gamma)$.
This confirms that the probabilistic descent condition in the existing convergence theory is
essential for the typical implementation~(Theorem~\ref{thm:equiv}).

It is also noteworthy that the condition $p>p_*$ for non-convergence cannot be relaxed
to~$p\ge p_*$~(Example~\ref{exp:converge_pds}), although a weaker non-convergence condition does
exist~(Theorem~\ref{thm:tight}). In addition,
our non-convergence theory covers the nonsmooth case by replacing the probabilistic ascent
condition with a natural generalization based on the Clarke subdifferential~(Section~\ref{sec:nonsmooth}).

Two technical problems remain open. The first one is whether the aforementioned typical implementation
of PDS converges in the borderline scenario with~$m = \log_2(1 - \log\theta/\log\gamma)$. The second is the
sharpness of the lower bound on the non-convergence probability in Theorem~\ref{thm:quantitative}.

The most intriguing direction for future research, however, extends far beyond PDS. As mentioned in
Section~\ref{sec:introduction}, we are interested in developing analogous non-convergence theories for other randomized methods, including
probabilistic trust region~\cite{Bandeira_Scheinberg_Vicente_2014,
Gratton_Royer_Vicente_Zhang_2015,Wang_Yuan_2022},
line search~\cite{Cartis_Scheinberg_2018,Berahas_Cao_Scheinberg_2021}, cubic
regularization~\cite{Cartis_Scheinberg_2018}, and subspace methods~\cite{Cartis_Roberts_2022,Roberts_Royer_2023}.
Similar to the probabilistic descent condition~\eqref{eq:p-k_descent-def},
the convergence analysis of these methods relies on submartingale-like assumptions in the form of
\[
    \Pr\Big(F_k \Mid \GG_{k-1}\Big) \;\ge\; P_0 \quad \text{for each~$k\ge 0$},
\]
where~$F_k$ is an event favorable to the success of the algorithm at iteration~$k$,~$\GG_{k-1}$ is
a~$\sigma$-algebra representing the history of the algorithm up to iteration~$k-1$, and~$P_0$
is a positive constant.
How essential are these assumptions for the convergence?
Answering this question would deepen our understanding of these methods and of submartingale-like
assumptions in general.
We are currently working on the corresponding non-convergence theories, with the aim of providing a
comprehensive answer to this question.
We hope that systematic non-convergence analysis of this kind, in contrast to the
construction of particular non-convergence examples, will receive more attention in the study of
optimization algorithms.

Defined according to the step size contracting-expanding scheme of PDS, the series
\[
    \sum_{k=0}^\infty \prod_{\ell=0}^{k-1} \gamma^{Y_\ell} \theta^{1-Y_\ell}
    \quad\text{and}\quad
    \sum_{k=0}^\infty Y_k\prod_{\ell=0}^{k-1} \gamma^{Y_\ell} \theta^{1-Y_\ell}
\]
have played a vital role in this paper.
They control the distance between the iterates and the starting point, and their boundedness is closely
related to the non-convergence of PDS.
The above-mentioned randomized methods all contain contracting-expanding schemes for certain
algorithmic parameters. Hence,
we expect that similar series will be instrumental in their non-convergence analysis,
where the tools and results developed here about these series will be a useful guide.
This is particularly true of the conditional Chernoff bound and its consequences in
Subsection~\ref{ssec:lemmas}, which are independent of the algorithmic details of PDS.
Moreover, these series can indeed provide a new way of establishing the global convergence of a class of
randomized methods including PDS, which is another topic we are currently studying but is beyond the
scope of this paper.
Finally, the stochastic processes defined by these series appear to have interesting probabilistic structures.
Further study of them may reveal additional connections between probability theory and numerical
optimization. This paper represents only a small step in this direction.

\small

\section*{Acknowledgments}
This paper is part of the PhD thesis of Cunxin Huang,
co-supervised by Zaikun Zhang and Professor Xiaojun Chen from The Hong
Kong Polytechnic University. Both authors are grateful to Professor Chen for her help
during Huang's PhD study.
\section*{Declarations}

\textbf{Funding.} This work was supported by the National Key R\&D Program of China (2023YFA1009300),
the General Program of the National Natural Science Foundation of China~(12571335), and the Hong
Kong PhD Fellowship Scheme~(PF21-56718).

\noindent\textbf{Conflict of interest.} The authors declare that they have no conflict of interest.

\ifnum\value{cite}>0
    \bibliography{\bibfile}
    \bibliographystyle{plain}
\fi

\begin{appendices}

\section{Basic lemmas}\label{app:basic_lemmas}

This section collects some basic lemmas needed in our analysis.
They are entirely independent of our algorithmic discussion.
We begin with Lemma~\ref{lem:pef}, a direct consequence of~\cite[Theorem~4.1.14]{Durrett_2019}.

\begin{lemma}
    \label{lem:pef}
    If~$E$ and~$F$ are events, and~$\GG$ is a $\sigma$-algebra with~$F\in\GG$,
    then~$\Pr(EF \mid \GG) = \Pr(E\mid \GG) \ind(F)$.
\end{lemma}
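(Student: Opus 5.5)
We need to prove Lemma~\ref{lem:pef}: if $F\in\GG$, then $\Pr(EF\mid\GG)=\Pr(E\mid\GG)\ind(F)$ almost surely.

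The plan is to verify the defining properties of conditional expectation, writing $\Pr(\cdot\mid\GG)=\expc(\ind(\cdot)\mid\GG)$. We have $\ind(EF)=\ind(E)\ind(F)$, and $\ind(F)$ is a bounded $\GG$-measurable random variable. The ``taking out what is known'' property of conditional expectation (\cite[Theorem~4.1.14]{Durrett_2019}) states that $\expc(XZ\mid\GG)=Z\,\expc(X\mid\GG)$ a.s.\ whenever $Z$ is $\GG$-measurable and $X$, $XZ$ are integrable. Applying it with $X=\ind(E)$ and $Z=\ind(F)$ gives the claim immediately, since both are bounded.

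To be self-contained, we can also check the identity directly. The candidate $\Pr(E\mid\GG)\ind(F)$ is $\GG$-measurable and integrable. For any $G\in\GG$, we have $G\cap F\in\GG$, so
\[
\expc\big(\Pr(E\mid\GG)\ind(F)\ind(G)\big)=\expc\big(\Pr(E\mid\GG)\ind(G\cap F)\big)=\Pr(E\cap F\cap G)=\expc\big(\ind(EF)\ind(G)\big).
\]
The second equality uses the defining property of $\Pr(E\mid\GG)$. By the almost sure uniqueness of conditional expectation, the candidate equals $\Pr(EF\mid\GG)$ a.s.

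There is no real obstacle here. The only point to note is that the equality holds in the almost sure sense, consistent with Remark~\ref{rem:as}.
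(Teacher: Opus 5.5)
Your proposal is correct and matches the paper, which presents this lemma simply as a direct consequence of the ``taking out what is known'' property \cite[Theorem~4.1.14]{Durrett_2019}, exactly as in your first paragraph. Your additional self-contained check, via the defining property of conditional expectation and almost sure uniqueness, is also correct.
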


Lemma~\ref{lem:prob_sigma_to_event} presents a basic connection between the conditional probability with respect to
a~$\sigma$-algebra and that with respect to an event.
\begin{lemma}\label{lem:prob_sigma_to_event}
  Let~$E$ be an event and~$\GG$ be a~$\sigma$-algebra. Then~$\Pr\left( E \mid \GG \right) \;\ge\; p$ if and only if~$\Pr\left( E \mid F \right) \ge p$ for all~$F\in\GG$ with~$\Pr(F) > 0$.
\end{lemma}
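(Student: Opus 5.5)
The statement to prove is Lemma~\ref{lem:prob_sigma_to_event}: $\Pr(E\mid\GG)\ge p$ a.s.\ if and only if $\Pr(E\mid F)\ge p$ for every $F\in\GG$ with $\Pr(F)>0$. I would prove it directly from the defining property of conditional probability. By definition, $Z=\Pr(E\mid\GG)$ is a $\GG$-measurable random variable satisfying
\[
\expc\bigl(Z\,\ind(F)\bigr)=\Pr(EF)\quad\text{for every } F\in\GG,
\]
and, following Remark~\ref{rem:as}, the inequality $Z\ge p$ is understood almost surely.

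For the forward direction, suppose $Z\ge p$ almost surely, and take $F\in\GG$ with $\Pr(F)>0$. Then
\[
\Pr(EF)=\expc\bigl(Z\,\ind(F)\bigr)\ge p\,\expc\bigl(\ind(F)\bigr)=p\Pr(F).
\]
Dividing by $\Pr(F)>0$ gives $\Pr(E\mid F)\ge p$.

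For the converse, suppose $\Pr(E\mid F)\ge p$ for every $F\in\GG$ with $\Pr(F)>0$. Consider the set $F_0=\{Z<p\}$, which lies in $\GG$ because $Z$ is $\GG$-measurable. Assume for contradiction that $\Pr(F_0)>0$. On $F_0$ we have $p-Z>0$, so $\expc\bigl((p-Z)\ind(F_0)\bigr)>0$; here I use that the integral of a strictly positive function over a set of positive measure is strictly positive. This is equivalent to
\[
\Pr(EF_0)=\expc\bigl(Z\,\ind(F_0)\bigr)<p\Pr(F_0),
\]
that is, $\Pr(E\mid F_0)<p$, contradicting the hypothesis. Hence $\Pr(F_0)=0$, which means $Z\ge p$ almost surely.

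The only step needing care is the strictness in the converse. If one prefers to avoid the strict-positivity fact, one can instead use the sets $F_j=\{Z<p-1/j\}$. If some $F_j$ had positive probability, then $\Pr(EF_j)\le(p-1/j)\Pr(F_j)<p\Pr(F_j)$, again a contradiction. So every $F_j$ is null, and $F_0=\bigcup_j F_j$ is null by countable additivity.

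The result does not depend on which version of the conditional probability is chosen, since any two versions differ only on a null set.
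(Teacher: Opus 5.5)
Your proof is correct and follows the paper's argument. Both proofs rest on the identity $\Pr(EF)=\expc\bigl(\Pr(E\mid\GG)\ind(F)\bigr)$ for $F\in\GG$: the paper obtains it via the law of total probability and Lemma~\ref{lem:pef}, while you take it directly as the defining property. Both establish the converse by showing that the set $\{\Pr(E\mid\GG)<p\}\in\GG$ must be null, and your explicit justification of the strict inequality spells out a step the paper leaves implicit.
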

\begin{proof}
For all~$F\in\GG$ with~$\Pr(F) > 0$,
the law of total probability and Lemma~\ref{lem:pef} yield
\begin{equation}
    \label{eq:cond-prob-sigma}
    \Pr(E \mid F) \;=\; \frac{\expc\big(\Pr(EF\mid \GG)\big)}{\Pr(F)}
    \;=\; \frac{\expc\big(\Pr(E \mid \GG) \ind(F)\big)}{\Pr(F)}.
\end{equation}
If~$\Pr(E \mid \GG) \ge p$ \as, then~\eqref{eq:cond-prob-sigma} yields~$\Pr(E \mid F) \ge p$.
If~$\Pr(E\mid F)\ge p$ for all~$F\in \GG$ with~$\Pr(F) >0$, then the
event~$\hat{F} = \{\Pr(E \mid \GG) < p\} \in \GG$ must have probability zero,
or else~\eqref{eq:cond-prob-sigma} implies~$\Pr(E\mid \hat{F}) < p$.
\end{proof}

Lemma~\ref{lem:equiv} elaborates on the equivalence among several probability inequalities. It is
useful for interpreting the conditions in Definitions~\ref{def:prob-ascent}
and~\ref{def:prob-ascent-nonsmooth}.

\begin{lemma}
    \label{lem:equiv}
    Let~$p\in[0,1]$ be a constant, $E$ and~$F$ be events, and~$\GG$ be a $\sigma$-algebra with~$E\in\GG$.
    Then the following three inequalities are equivalent to each other:
    \begin{equation*}
        \Pr(F \mid \GG) \;\ge\; p \ind(E^\comp), \qquad
        \Pr(F \cap E^\comp \mid \GG) \;\ge\; p \ind(E^\comp), \qquad
        \text{and} \qquad
        \Pr(E \cup F \mid \GG) \;\ge\; p.
    \end{equation*}
    In particular, if~$E\subseteq F$, then they are all equivalent to~$\Pr(F \mid \GG) \ge p$.
\end{lemma}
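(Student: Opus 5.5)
We prove the three inequalities equivalent by expressing each one through the random variable $\Pr(F\mid\GG)$ on the two $\GG$-measurable pieces $E$ and $E^\comp$. Since every statement involves conditional probabilities with respect to $\GG$, it is to be read almost surely, as stipulated in Remark~\ref{rem:as}. The basic tool is Lemma~\ref{lem:pef}. Because $E\in\GG$, we also have $E^\comp\in\GG$, and Lemma~\ref{lem:pef} gives
\[
\Pr(F\cap E^\comp\mid\GG)=\Pr(F\mid\GG)\,\ind(E^\comp)
\quad\text{and}\quad
\Pr(E\mid\GG)=\ind(E).
\]

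\textbf{First $\Leftrightarrow$ second.} On the event $E^\comp$, the left-hand sides $\Pr(F\mid\GG)$ and $\Pr(F\cap E^\comp\mid\GG)=\Pr(F\mid\GG)$ coincide almost surely. On $E$, both right-hand sides vanish. The first inequality then holds trivially because conditional probabilities are nonnegative. The second holds because its left-hand side equals $0\ge 0$.

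\textbf{Second $\Leftrightarrow$ third.} Decompose $E\cup F$ as the disjoint union $E\cup(F\cap E^\comp)$. Additivity of conditional probability (valid almost surely) yields
\[
\Pr(E\cup F\mid\GG)=\ind(E)+\Pr(F\cap E^\comp\mid\GG).
\]
On $E$, the third inequality reads $1+\Pr(F\cap E^\comp\mid\GG)\ge p$. This is automatic since $p\le1$, and the second inequality is trivial there as well. On $E^\comp$, both inequalities reduce to $\Pr(F\cap E^\comp\mid\GG)\ge p$. So the two inequalities hold on exactly the same events up to a null set, and therefore are equivalent almost surely.

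\textbf{The special case $E\subseteq F$.} Here $E\cup F=F$, so the third inequality becomes literally $\Pr(F\mid\GG)\ge p$.

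No step presents a real obstacle. The only care needed is bookkeeping: each equality of conditional probabilities holds only almost surely, so the case split on $E$ and $E^\comp$ must be carried out outside a null set. Since $E\in\GG$, the indicator $\ind(E)$ is a version of $\Pr(E\mid\GG)$, which makes this split legitimate.
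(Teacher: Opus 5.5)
Your proof is correct and rests on the same ingredients as the paper's: Lemma~\ref{lem:pef} gives $\Pr(F\cap E^\comp\mid\GG)=\Pr(F\mid\GG)\ind(E^\comp)$, and the disjoint decomposition $E\cup F=E\cup(F\cap E^\comp)$ together with $\Pr(E\mid\GG)=\ind(E)$ handles the third inequality. The only difference is organization: the paper proves the cycle first$\Rightarrow$second$\Rightarrow$third$\Rightarrow$first by chaining almost-sure inequalities, using $(E\cup F)\cap E^\comp=F\cap E^\comp$ and monotonicity in the last step, whereas you prove two biconditionals by a pointwise split on $E$ and $E^\comp$ outside a null set, which is equally valid.
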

\begin{proof}
    We refer to the three inequalities as (a), (b), and (c), in left-to-right order.
    \\(a)~$\Rightarrow$~(b):~Since~$E^\comp \in \GG$, Lemma~\ref{lem:pef}
    yields~$\Pr(F \cap E^\comp \mid \GG) = \Pr(F \mid \GG) \ind(E^\comp) \;\ge\; p \ind(E^\comp)$.
    \\(b)~$\Rightarrow$~(c):~Since~$E \cup F = E \cup (F \cap E^\comp)$ and~$E \in \GG$, we have
    \[
        \Pr(E\cup F \mid \GG) \;=\; \Pr(E\mid \GG) + \Pr(F \cap E^\comp \mid \GG)
        \;=\; \ind(E) + \Pr(F\cap E^\comp \mid \GG)
        \;\ge\;\ind(E)+ p \ind(E^\comp)\;\ge\; p.
    \]
    (c)~$\Rightarrow$~(a):~Since~$(E \cup F)\cap E^\comp = F\cap E^\comp$
    and~$E^\comp \in \GG$, Lemma~\ref{lem:pef} leads to
    \[
        \Pr(F \mid \GG) \;\ge\; \Pr((E \cup F)\cap E^\comp \mid \GG) \;=\; \Pr(E \cup F \mid \GG)
        \ind(E^\comp) \;\ge\; p \ind(E^\comp).
    \]
    (c) reduces to~$\Pr(F\mid \GG) \ge p$ when~$E\subseteq F$.
\end{proof}

Lemma~\ref{lem:calculate_expectation} and the subsequent Corollary~\ref{cor:calculate_probability}
are helpful in the proofs of Propositions~\ref{prop:prob-asc} and~\ref{prop:prob-asc_nonsmooth}.
\begin{lemma}
  \label{lem:calculate_expectation}
  Let~$X$ and~$Y$ be random vectors. Consider a Borel measurable function~$h$
  such that $\expc(|h(X,Y)|)<\infty$ and define~$H(y) =\expc(h(X,y))$.
  \begin{enumerate}
    \item If~$X$ is independent of~$Y$, then~$\expc(h(X,Y)) =\expc(H(Y))$. \label{it:lemma-calculate-expectation-a}
    \item If~$X$ is independent of~$Y$~and a~$\sigma$-algebra~$\GG$, then~$\expc(h(X,Y) \mid \GG) = \expc(H(Y) \mid \GG)$. \label{it:lemma-calculate-expectation-b}
  \end{enumerate}
\end{lemma}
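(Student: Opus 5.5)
Both parts are standard facts about conditional expectation given independence, and I would obtain them from the ``freezing'' (substitution) lemma, i.e.\ \cite[Theorem~4.1.14]{Durrett_2019} or its textbook analogue. Since that result may be stated only for indicators or bounded functions, I will also sketch the usual monotone-class route.

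For part~\ref{it:lemma-calculate-expectation-a}, let $\mu_X$ and $\mu_Y$ be the laws of $X$ and $Y$. Independence means the joint law of $(X,Y)$ is $\mu_X\otimes\mu_Y$. Since $\expc(|h(X,Y)|)<\infty$, $h$ is $\mu_X\otimes\mu_Y$-integrable, and Fubini's theorem gives
\[
\expc(h(X,Y))=\int\!\!\int h(x,y)\,\mu_X(dx)\,\mu_Y(dy)=\int H(y)\,\mu_Y(dy)=\expc(H(Y)).
\]
Fubini also guarantees that $H(y)$ is finite for $\mu_Y$-a.e.\ $y$, that $H$ is Borel measurable on that set, and that $H$ is $\mu_Y$-integrable. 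This makes $H(Y)$ well defined almost surely and integrable, which I must check before writing $\expc(H(Y))$.

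For part~\ref{it:lemma-calculate-expectation-b}, I would show that $H(Y)$ is a version of $\expc(h(X,Y)\mid\GG)$ whenever $Y$ is $\GG$-measurable. That case is exactly the freezing lemma, and in fact I may only need it in this form. The statement as written, however, lets $Y$ be arbitrary, with $X$ independent of $\sigma(Y)\vee\GG$. I would read ``independent of $Y$ and $\GG$'' as independence of $X$ from $\sigma(Y)\vee\GG$, since the tower step below needs it. I then fix $G\in\GG$ and apply part~\ref{it:lemma-calculate-expectation-a} to $\tilde h(x,(y,z))=h(x,y)z$ with $\tilde Y=(Y,\ind(G))$. This gives $\expc(h(X,Y)\ind(G))=\expc(H(Y)\ind(G))$. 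Integrating against indicators of $G\in\GG$ in this way shows
\[
\expc(h(X,Y)\mid\GG)=\expc\big(\expc(h(X,Y)\mid\sigma(Y)\vee\GG)\mid\GG\big)=\expc(H(Y)\mid\GG).
\]
The inner conditional expectation equals $H(Y)$. To see this, run the same identity over sets $B\cap G$ with $B\in\sigma(Y)$ and $G\in\GG$. These sets form a $\pi$-system generating $\sigma(Y)\vee\GG$, and since both sides are integrable, Dynkin's $\pi$-$\lambda$ theorem extends the equality of integrals to the whole $\sigma$-algebra.

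The main obstacle is the independence hypothesis. If $X$ were only pairwise independent of $Y$ and of $\GG$, part~\ref{it:lemma-calculate-expectation-b} could fail, so the step to pin down is using joint independence of $X$ from $(Y,\GG)$. In the applications (Propositions~\ref{prop:prob-asc} and~\ref{prop:prob-asc_nonsmooth}) this holds: $X$ is the fresh polling set $\DD_k$, $Y=X_k$ or $G_k$ is $\FF_{k-1}$-measurable, and $\GG=\FF_{k-1}$. The remaining measurability and integrability details are routine via Fubini.
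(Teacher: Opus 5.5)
Your proposal is correct and follows the paper's proof. Part (a) is the Fubini argument that the paper leaves as a routine extension of Durrett's Theorem~2.1.12. In part (b) the paper applies (a) to $h(x,y)z$ with $\hat{Y}=(Y,\ind(E))$ exactly as you do, and then concludes directly from $\expc(h(X,Y)\ind(E))=\expc(H(Y)\ind(E))$ for all $E\in\GG$ via the tower property. So your extra step identifying $\expc(h(X,Y)\mid\sigma(Y)\vee\GG)$ with $H(Y)$ through the $\pi$-$\lambda$ theorem is valid but unnecessary. Your reading of the hypothesis as independence of $X$ from $\sigma(Y)\vee\GG$ is exactly what the paper uses implicitly when it asserts that $X$ is independent of $\hat{Y}$, and you are right that pairwise independence would not suffice.
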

\begin{proof}
  Item~\ref{it:lemma-calculate-expectation-a} is a generalization of~\cite[Theorem~2.1.12]
  {Durrett_2019}, which considers random variables rather than random vectors. We omit its proof,
  which is a straightforward extension of that for~\cite[Theorem~2.1.12]{Durrett_2019}.

  Now we prove item~\ref{it:lemma-calculate-expectation-b} based on~\ref{it:lemma-calculate-expectation-a}. Since~$\expc(H(Y) \mid \GG)$ is $\GG$-measurable,
  the definition of conditional expectation tells us that we only need to verify
  \begin{equation}
      \label{eq:verify_cond_exp}
    \expc (h(X,Y)\ind(E)) \;=\; \expc (\expc(H(Y) \mid \GG) \ind(E))
  \end{equation}
  for all~$E\in\GG$.
  The right-hand
  side of~\eqref{eq:verify_cond_exp} equals~$\expc (H(Y) \ind(E))$
  due to the fact that~$E\in\GG$ and the tower property of conditional expectation. Hence, we only need to check
  \begin{equation}
      \label{eq:verify_cond_exp_equiv}
    \expc (h(X,Y)\ind(E)) \;=\; \expc (H(Y) \ind(E)).
  \end{equation}
  Denote~$\ind(E)$ by~$Z$ and define~$\hat{Y} = (Y,Z)$.
  Then~$X$ is independent of~$\hat{Y}$ by our assumption. Define
  \begin{equation*}
    \hat{h}(x,\hat{y}) = h(x,y)z \quad \text{and} \quad \hat{H}(\hat{y}) = \expc(\hat{h}(X,\hat{y})),
  \end{equation*}
  where~$\hat{y} = (y,z)$, with~$y$ and~$z$ having the same dimensions as $Y$ and~$Z$, respectively.
  Then we can apply item~\ref{it:lemma-calculate-expectation-a} to~$\hat{h}$ and~$\hat{H}$,
  obtaining
  \begin{equation}
      \label{eq:verify_exp_new_phi}
    \expc(\hat{h}(X,\hat{Y})) \;=\; \expc(\hat{H}(\hat{Y})).
  \end{equation}
  In addition, by the definition of~$\hat{H}$ and~$H$, we have
  \begin{equation}
      \label{eq:tld_H}
      \hat{H}(\hat{y}) \;=\; \expc(h(X,y)z) \;=\; H(y) z.
  \end{equation}
  Plugging~\eqref{eq:tld_H} and the definitions of~$\hat{h}$ into~\eqref{eq:verify_exp_new_phi},
  we obtain~$\expc(h(X,Y)Z) = \expc(H(Y)Z)$, which is~\eqref{eq:verify_cond_exp_equiv}.
  This completes the proof.
\end{proof}

\begin{remark}
  Taking expectation on both sides of the equality in item~\ref{it:lemma-calculate-expectation-b} of
  Lemma~\ref{lem:calculate_expectation}, we can recover item~\ref{it:lemma-calculate-expectation-a}
  by the tower property of conditional expectation.
  We also note that item~\ref{it:lemma-calculate-expectation-b} is a generalization
  of~\cite[Example~4.1.7]{Durrett_2019}~\textnormal{(}see also~\cite[page~148]{Cinlar_2011}\textnormal{)}, where~$\GG = \sigma(Y)$.
\end{remark}

    Recalling that the probability of an event is the expectation of its indicator function,
    we obtain Corollary~\ref{cor:calculate_probability}
    from item~\ref{it:lemma-calculate-expectation-b} of Lemma~\ref{lem:calculate_expectation}.
\begin{corollary}
    \label{cor:calculate_probability}
    Let~$X$ and~$Y$ be random vectors,~$\GG$ be a~$\sigma$-algebra, and~$h$ be a Borel measurable
    function. If~$X$ is independent of~$Y$~and~$\GG$, then
    $\Pr(h(X,Y) \ge 0\mid \GG) = \expc(P(Y) \mid \GG)$
    with~$P(y) = \Pr(h(X, y) \ge 0)$.
\end{corollary}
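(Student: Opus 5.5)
The statement is Corollary~\ref{cor:calculate_probability}. I plan to derive it directly from item~\ref{it:lemma-calculate-expectation-b} of Lemma~\ref{lem:calculate_expectation}, applied to a suitable indicator function.

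\textbf{Step~1 (choice of integrand).} Define $\hat{h}(x,y) = \ind\big(h(x,y)\ge 0\big)$, that is, the indicator of the set $\{(x,y)\mathrel{:} h(x,y)\ge 0\}$. Since $h$ is Borel measurable, $\hat{h}$ is Borel measurable too. It takes values in $\{0,1\}$, so $\expc(|\hat{h}(X,Y)|)\le 1<\infty$, and the integrability hypothesis of the lemma holds.

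\textbf{Step~2 (identify the auxiliary function).} The function that Lemma~\ref{lem:calculate_expectation} associates with $\hat{h}$ is
\[
\hat{H}(y) \;=\; \expc\big(\hat{h}(X,y)\big) \;=\; \Pr\big(h(X,y)\ge 0\big) \;=\; P(y).
\]
So $\hat{H}$ coincides with $P$ as defined in the corollary. The left-hand side of the desired identity also matches the lemma's form, because the conditional probability of an event is the conditional expectation of its indicator:
\[
\Pr\big(h(X,Y)\ge 0\mid\GG\big) \;=\; \expc\big(\hat{h}(X,Y)\mid\GG\big).
\]

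\textbf{Step~3 (apply the lemma).} By hypothesis, $X$ is independent of both $Y$ and $\GG$. Item~\ref{it:lemma-calculate-expectation-b} then gives $\expc(\hat{h}(X,Y)\mid\GG) = \expc(\hat{H}(Y)\mid\GG)$, which is $\expc(P(Y)\mid\GG)$ by Step~2. Combining this with the identity at the end of Step~2 yields the claim.

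The only point that needs care is measurability: $P$ must be Borel measurable so that $P(Y)$ is a random variable. This comes with the lemma's framework, which takes for granted that $\hat{H}(Y)$ is a well-defined random variable. If one wants to check it independently, it follows from Tonelli's theorem applied to the bounded measurable function $\hat{h}$ on the product space.

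Beyond this, there is no real obstacle. The corollary is a specialization of the lemma to indicator functions, and all equalities hold in the almost sure sense, as in Remark~\ref{rem:as}.
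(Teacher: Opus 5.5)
Your proposal is correct and follows the paper's own argument. The paper also obtains the corollary by applying item~\ref{it:lemma-calculate-expectation-b} of Lemma~\ref{lem:calculate_expectation} to the indicator $\ind\big(h(x,y)\ge 0\big)$, using that a conditional probability is the conditional expectation of an indicator; your extra remark on the Borel measurability of $P$ via Tonelli supplies a detail the paper leaves implicit.
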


Now we present Lemma~\ref{lem:multi_rule}, which will help us prove Lemma~\ref{lem:shift}.
As a preparation,
given an event~$F$ with~$\Pr(F) >0$, we
let~$\Pr_F$ be the probability measure defined by~$\Pr_F(E) = \Pr(E\mid F)$ for every event~$E$,
and~$\Pr_F(\,\cdot \mid \GG)$ be the corresponding conditional probability with respect to a~$\sigma$-algebra~$\GG$.
Moreover, we let~$\expc_F$ denote the expectation under~$\Pr_F$, and~$\expc_F(\,\cdot \mid \GG)$
denote the corresponding conditional expectation. It is well known that
\begin{equation}
    \label{eq:multi_rule}
    \expc(X\ind(F)) \;=\; \expc_F(X) \Pr(F)
\end{equation}
for any random variable~$X$ (see, \eg,~\cite[Section~8.1]{Klenke_2020}). Consequently, we have
\begin{equation}
\label{eq:total_expect}
\expc(X \ind(F)) \;=\; \expc\left( \expc_F(X\mid\GG) \ind(F)\right).
\end{equation}
To see~\eqref{eq:total_expect}, multiply both sides of the
equality~$\expc_F(X) = \expc_F(\expc_F(X\mid \GG))$ by~$\Pr(F)$ and then apply~\eqref{eq:multi_rule}.

\begin{lemma}
\label{lem:multi_rule}
Let~$X$ be a random variable, $F$ be an event with~$\Pr(F)>0$, and~$\GG$ be a~$\sigma$-algebra.
  \begin{enumerate}
      \item \label{it:multi_rule}
          It holds that~$\expc(X \ind(F) \mid \GG) =  \expc_F(X\mid \GG) \,\Pr(F\mid \GG)$.
      \item \label{it:multi_rule_equiv}
          For any~$p\in[0, 1]$, we have the following equivalence:
          \begin{equation*}
          \expc(X \ind(F)\mid \GG) \;\ge\; p \Pr(F\mid \GG) \quad (\Pr\text{-\as})
          \qquad \Longleftrightarrow \qquad
          \expc_F(X\mid \GG) \;\ge\; p \quad (\Pr_F\text{-\as}).
      \end{equation*}
  \end{enumerate}
\end{lemma}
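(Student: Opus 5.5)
Both items follow from the identity \eqref{eq:total_expect} and the definition of conditional expectation. For item~\ref{it:multi_rule}, I would check that the $\GG$-measurable random variable $Z=\expc_F(X\mid\GG)\,\Pr(F\mid\GG)$ satisfies $\expc(Z\ind(G))=\expc(X\ind(F)\ind(G))$ for every $G\in\GG$. Fix $G\in\GG$. Since $\expc_F(X\mid\GG)\ind(G)$ is $\GG$-measurable, the tower property gives
\[
\expc\big(\expc_F(X\mid\GG)\ind(G)\Pr(F\mid\GG)\big)=\expc\big(\expc_F(X\mid\GG)\ind(G)\ind(F)\big).
\]
Also, $\expc_F(X\mid\GG)\ind(G)=\expc_F(X\ind(G)\mid\GG)$ $\Pr_F$-a.s., and the right-hand side above only depends on the values of $\expc_F(\cdot\mid\GG)$ on $F$. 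So, applying \eqref{eq:total_expect} to the random variable $X\ind(G)$, the right-hand side equals $\expc(X\ind(G)\ind(F))$, as required.

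Two technicalities need care here. First, $\expc_F(X\mid\GG)$ is determined only $\Pr_F$-a.s., i.e.\ only on $F$ up to null sets. Off $F$ it is still some $\GG$-measurable function, and the product with $\Pr(F\mid\GG)$ is unaffected by this ambiguity in the sense that matters, since only its integral against $\ind(F)$ enters. Second, integrability: I would assume $X$ is integrable or nonnegative, which is the setting used for indicators later. I expect this a.s.\ bookkeeping to be the main obstacle, in particular making precise that ``$\Pr_F$-a.s.\ equal'' suffices after multiplying by $\ind(F)$.

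For item~\ref{it:multi_rule_equiv}, apply item~\ref{it:multi_rule}. The left inequality becomes $\big(\expc_F(X\mid\GG)-p\big)\Pr(F\mid\GG)\ge0$ $\Pr$-a.s.

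($\Leftarrow$) Let $B=\{\expc_F(X\mid\GG)<p\}\in\GG$. Then $\Pr_F(B)=0$, i.e.\ $\Pr(B\cap F)=0$. Hence $\expc(\Pr(F\mid\GG)\ind(B))=\Pr(F\cap B)=0$, so $\Pr(F\mid\GG)=0$ a.s.\ on $B$. Therefore the product is nonnegative $\Pr$-a.s.

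($\Rightarrow$) On $B$, the product is $\ge0$ a.s., which forces $\Pr(F\mid\GG)=0$ a.s.\ on $B$. Integrating gives $\Pr(F\cap B)=\expc(\Pr(F\mid\GG)\ind(B))=0$, so $\Pr_F(B)=0$.
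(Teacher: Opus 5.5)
Your proposal is correct and follows essentially the same route as the paper. For item (a), the paper likewise verifies the defining property on each $E\in\GG$ via the tower property and then applies \eqref{eq:total_expect} to $X\ind(E)$. For item (b), it uses your set $B=\{\expc_F(X\mid\GG)<p\}$, packaged as the nonnegative variable $W=\ind(B)\Pr(F\mid\GG)$ with $\expc(W)=\Pr(B\cap F)=\Pr_F(B)\Pr(F)$, so that both sides of the equivalence reduce to $\expc(W)=0$; your remarks on versions of $\expc_F(X\mid\GG)$ and on integrability are sound refinements that the paper leaves implicit.
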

\begin{proof}
    \ref{it:multi_rule}~Since~$\expc_F( X \mid \GG ) \Pr(F\mid \GG)$ is~$\GG$-measurable,
    the definition of conditional expectation tells us that we only need to verify
    \begin{equation}\label{eq:multi-rule-verify}
        \expc( \expc_F(X\mid \GG) \, \Pr(F\mid \GG) \, \ind(E))
        \;=\;
        \expc(  X\ind(F)\, \ind(E))
    \end{equation}
    for all~$E\in \GG$.
    Denote~$Y = \expc_F(X\mid \GG)\ind(E)$. Then the left-hand side of~\eqref{eq:multi-rule-verify} equals
    \begin{equation*}
        \expc(Y\, \Pr(F\mid \GG))
        \;=\; \expc(Y\, \expc(\ind(F)\mid \GG))
        \;=\; \expc(\expc(Y \ind(F)\mid \GG) )
        \;=\; \expc(Y \ind(F) ).
    \end{equation*}
    To calculate~$\expc(Y \ind(F))$, we first note that~$Y = \expc_F(X \ind(E)\mid \GG)$ and then apply~\eqref{eq:total_expect}
    to the random variable~$X \ind(E)$, obtaining
    \begin{equation*}
        \expc(Y \ind(F) )
        \;=\; \expc\left(\expc_F(X\ind(E)\mid \GG) \, \ind(F)\right)
        \;=\; \expc\left([X \ind(E)]\ind(F)\right).
    \end{equation*}
    Therefore, equality~\eqref{eq:multi-rule-verify} holds.

    \ref{it:multi_rule_equiv}~Denote~$Z = \expc_F(X \mid \GG)$. According to~\ref{it:multi_rule}, we
    only need to prove the equivalence
    \begin{equation}\label{eq:lem-multi-rule-pf1}
        Z \Pr(F \mid \GG) \;\ge\; p \Pr(F \mid \GG) \quad (\Pr\text{-\as})
        \qquad \Longleftrightarrow \qquad
        Z \;\ge\; p \quad (\Pr_F\text{-\as}).
    \end{equation}
    To this end, defining the nonnegative random variable~$W = \ind(Z < p)\,\Pr(F \mid \GG)$, we observe that
    \begin{equation}
        \label{eq:lem_multi_rule_obs1}
            \big\{Z \Pr(F \mid \GG)  \,\ge\, p \Pr(F \mid \GG)\big\}
        \;=\; \big\{Z \ge p \,\text{ or }\, \Pr(F \mid \GG) = 0\}
        \;=\;  \big\{W = 0\},
    \end{equation}
    and that
    \begin{equation}
        \label{eq:lem_multi_rule_obs2}
        \Pr_F\!\left(Z < p \right)\Pr(F) \;=\; \Pr\left( \{Z < p\} \cap F \right)
        \;=\;
        \expc\big(\ind(Z < p) \Pr(F\mid \GG)\big) \;=\;  \expc(W).
    \end{equation}
    Therefore, we have the following two equivalences:
    \begin{equation*}
        Z \Pr(F \mid \GG) \,\ge\, p \Pr(F \mid \GG)
        \quad (\Pr\text{-\as})
        \qquad \Longleftrightarrow \qquad
        W  \,=\, 0
        \quad (\Pr\text{-\as})
        \qquad \Longleftrightarrow \qquad
        \Pr_F(Z < p) \,=\, 0,
    \end{equation*}
    where the first one is due to~\eqref{eq:lem_multi_rule_obs1}, while the second comes
    from~\eqref{eq:lem_multi_rule_obs2} and the fact that~$\Pr(F)>0$.
    Hence,~\eqref{eq:lem-multi-rule-pf1} holds. The proof is complete.
\end{proof}

\begin{remark}
    \label{rem:multi_rule}
    Item~\ref{it:multi_rule} of Lemma~\ref{lem:multi_rule} generalizes equality~\eqref{eq:multi_rule}.
    In light of~\ref{it:multi_rule}, item~\ref{it:multi_rule_equiv} shows that we can cancel~$\Pr(F\mid \GG)$ from both sides of
    the almost sure inequality~$\expc(X \ind(F) \mid \GG) \ge p \Pr(F \mid \GG)$,
    switching from~$\Pr$ to~$\Pr_F$ for the almost-sureness.
    For an event~$E$, item~\ref{it:multi_rule_equiv} with~$X = \ind(E)$ leads to the equivalence
    \begin{equation}
        \label{eq:multi_rule_equiv_event}
        \Pr(EF\mid \GG) \;\ge\; p \Pr(F\mid \GG) \quad (\Pr\text{-\as})
          \qquad \Longleftrightarrow \qquad
        \Pr_F(E\mid \GG) \;\ge\; p \quad (\Pr_F\text{-\as}).
    \end{equation}
    Note that~$\Pr(F \mid \GG)=\ind(F)$ if~$F \in \GG$, as is the case in
    the proof of Lemma~\ref{lem:shift}.
\end{remark}

Lemma~\ref{lem:inf_ineq} presents an elementary inequality needed in the proof of
Lemma~\ref{lem:strong_prob_exp_des}.
\begin{lemma}\label{lem:inf_ineq}
  Suppose that $k>k_0\ge 0$ and $0<q < p\le 1$. Then
  \begin{equation*}
    \inf_{t>0}~t(kq - k_0) + p (k-k_0)(\me^{-t} -1) \;\le\; -\frac{(q-p)^2}{2p}(k+k_0).
  \end{equation*}
\end{lemma}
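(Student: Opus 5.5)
The plan is to bound the infimum by evaluating the objective at one well-chosen $t>0$, after replacing $\me^{-t}-1$ with a quadratic upper bound; this turns the problem into minimizing a quadratic in $t$. Write $a = k-k_0>0$. For all $t\ge 0$ we have the elementary inequality $\me^{-t}-1 \le -t + t^2/2$, which holds because the alternating Taylor remainder is nonpositive (equivalently, $g(t)=\me^{-t}-1+t-t^2/2$ has $g(0)=0$ and $g'(t)=-\me^{-t}+1-t\le 0$). Since $pa\ge 0$, this gives, for every $t>0$,
\[
t(kq-k_0) + pa(\me^{-t}-1) \;\le\; -tb + \frac{pa}{2}\,t^2,
\qquad b \;=\; p(k-k_0) - (kq-k_0) \;=\; k(p-q) + k_0(1-p).
\]

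Next we check the sign of $b$. Because $p\le 1$ and $k_0\ge 0$, we have $k_0(1-p)\ge 0$. Together with $q<p$ and $k>0$, this gives $b \ge k(p-q) > 0$.

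Now choose $t = b/(pa) > 0$, which minimizes the quadratic. This yields
\[
\inf_{t>0}\,\bigl[t(kq-k_0) + pa(\me^{-t}-1)\bigr] \;\le\; -\frac{b^2}{2pa}.
\]
It then suffices to show $b^2/a \ge (p-q)^2(k+k_0)$, that is, $b^2 \ge (p-q)^2(k-k_0)(k+k_0) = (p-q)^2(k^2-k_0^2)$. This follows from $b\ge k(p-q)>0$:
\[
b^2 \;\ge\; k^2(p-q)^2 \;\ge\; (k^2-k_0^2)(p-q)^2.
\]

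No step is a genuine obstacle. The only point needing care is confirming that the chosen $t$ is positive and that the sign conventions work out. Both follow from $b>0$, which uses $p\le 1$ to discard the $k_0(1-p)$ term favorably.
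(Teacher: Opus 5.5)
Your proof is correct, but it takes a different route from the paper's.

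The paper fixes $t=\log(p/q)$, which does not depend on $k$ or $k_0$. It then treats $\varphi(q)=(kq-k_0)\log(p/q)+(k-k_0)(q-p)$ as a function of $q$ and expands it to second order around $q=p$:
\begin{itemize}
\item $\varphi(p)=0$;
\item $\varphi'(p)=k_0(1/p-1)\ge 0$, so the first-order term is nonpositive because $q<p$;
\item $\varphi''(\xi)=-k/\xi-k_0/\xi^2\le -(k+k_0)/p$ for the mean-value point $\xi\in(q,p)$.
\end{itemize}
The condition $p\le 1$ is used in both the first-order and second-order steps.

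You instead keep $t$ free and bound the exponential by its second-order Taylor upper bound $\me^{-t}-1\le -t+t^2/2$. You then complete the square with the $k,k_0$-dependent choice $t=b/(p(k-k_0))$. Here $p\le 1$ enters only through $k_0(1-p)\ge 0$, which is what makes $b>0$.

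What each approach buys:
\begin{itemize}
\item Your argument avoids the unspecified mean-value point and is purely algebraic.
\item It also yields a stronger intermediate bound, $-b^2/(2p(k-k_0))\le -k^2(p-q)^2/(2p(k-k_0))$, which you only weaken to the stated $-(p-q)^2(k+k_0)/(2p)$ at the end.
\item The paper's choice $t=\log(p/q)$ is the textbook exponential-moment parameter. It is the exact minimizer of $tkq+pk(\me^{-t}-1)$ when $k_0=0$, so the paper's proof stays aligned with the standard multiplicative Chernoff computation that Lemma~\ref{lem:strong_prob_exp_des} is built on.
\end{itemize}
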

\begin{proof}
  Considering~$t = \log (p/q)$, we only need to prove
    \begin{equation}\label{eq:pq_ineq}
        (kq-k_0)\log (p/q) + (k - k_0)(q - p) \;\le\; -\frac{(q-p)^2}{2p}(k+k_0).
    \end{equation}
    Regard the left-hand side of~\eqref{eq:pq_ineq} as a function of~$q$ and denote it by~$\varphi(q)$. Then
    \begin{equation*}
        \varphi(p) = 0,
        \quad
        \varphi'(p) \;=\; \frac{k_0}{p} - k_0 \;\ge\;0,
        \quad \text{and} \quad
        \varphi''(q) \;=\; -\frac{k}{q} - \frac{k_0}{q^2}.
    \end{equation*}
    By the Taylor expansion of $\varphi(q)$ at the point $p$, there exists a~$\xi\in(q,\,p)$ such that
    \begin{equation*}
      \varphi(q) \;=\; \varphi'(p)(q - p) + \frac{1}{2}\varphi''(\xi) (q-p)^2
      \;\le\; -\frac{(q-p)^2}{2}\left( \frac{k}{\xi} + \frac{k_0}{\xi^2} \right)
      \;\le\; -\frac{(q-p)^2}{2p}(k+k_0).
      \qedhere
    \end{equation*}
\end{proof}

Lemma~\ref{lem:optimality_measure} shows that~$\gap(0,\clarke f(\cdot))$
is lower semicontinuous if~$f$ is convex as mentioned in Remark~\ref{rem:optimality_measure}.

\begin{lemma}\label{lem:optimality_measure}
 Let~$f\mathrel{:}\RR^n \to \RR$ be convex. Then~$\mu(x) = \gap(0, \clarke f(x))$ is lower
 semicontinuous for~$x\in\RR^n$.
\end{lemma}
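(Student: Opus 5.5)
The plan is to prove lower semicontinuity by sequential closedness of the graph of the subdifferential together with local boundedness, using that for convex $f$ the Clarke subdifferential coincides with the convex subdifferential $\partial f$ \cite[Proposition~2.2.7]{Clarke_1990}. Since $f$ is finite and convex on $\RR^n$, $\partial f(x)$ is nonempty, convex, and compact for every $x$, so $\mu(x)=\gap(0,\partial f(x))=\min_{g\in\partial f(x)}\|g\|$ is finite and the minimum is attained.

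Fix $x\in\RR^n$ and a sequence $x_j\to x$; it suffices to show $\liminf_j \mu(x_j)\ge \mu(x)$. Pass to a subsequence realizing the $\liminf$, and for each $j$ pick $g_j\in\partial f(x_j)$ with $\|g_j\|=\mu(x_j)$. The first key step is \emph{local boundedness}: $f$ is locally Lipschitz (a finite convex function on $\RR^n$ is; \cite[Theorem~10.4]{Rockafellar_1970}), so there are a neighborhood of $x$ and a constant $L$ with $\|g\|\le L$ for all $g\in\partial f(y)$ and $y$ in that neighborhood. Hence $\{g_j\}$ is bounded and, after extracting a further subsequence, $g_j\to \bar g$.

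The second key step is \emph{closedness of the graph}: from the subgradient inequality $f(y)\ge f(x_j)+g_j^\trs(y-x_j)$ for all $y$, letting $j\to\infty$ and using continuity of $f$ gives $f(y)\ge f(x)+\bar g^\trs(y-x)$, i.e.\ $\bar g\in\partial f(x)$. Therefore $\mu(x)\le\|\bar g\|=\lim_j\|g_j\|=\liminf_j\mu(x_j)$ along the chosen subsequence, which is the original $\liminf$. This proves lower semicontinuity.

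The only point needing care is the first step, local boundedness of $\partial f$; I would justify it either via the local Lipschitz constant (any subgradient norm is bounded by it) or directly by the Clarke bound $\|\xi\|\le L$ for $\xi\in\clarke f(y)$ from \cite{Clarke_1990}, and otherwise the argument is routine.
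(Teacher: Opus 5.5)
Your proof is correct and rests on the same mechanism as the paper's: the outer (upper) semicontinuity of the subdifferential of a finite convex function. The only difference is how that property is obtained.

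The paper cites Corollary~24.5.1 of Rockafellar's \emph{Convex Analysis} directly. That result says that for each $\varepsilon>0$ there is $\delta>0$ with $\partial f(y)\subseteq\partial f(x)+\BB(0,\varepsilon)$ for all $y\in\BB(x,\delta)$, and the paper reads off $\mu(y)\ge\mu(x)-\varepsilon$ on $\BB(x,\delta)$ in one line.

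You instead derive what is needed from two more elementary facts, combined through a sequential compactness argument. The first is local boundedness of $\partial f$, which follows from local Lipschitz continuity. The second is closedness of its graph, obtained by passing to the limit in the subgradient inequality using continuity of $f$. These two facts are the standard route to the upper semicontinuity statement the paper invokes.

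Your version is therefore self-contained at the cost of a few more lines. The paper's version is shorter and gives a uniform $\varepsilon$--$\delta$ bound directly.
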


\begin{proof}
  Fix an~$x\in\RR^n$ and an~$\varepsilon>0$. By~\cite[Corollary~24.5.1]{Rockafellar_1970}, there exists a~$\delta>0$ such that
  \begin{equation*}
    \clarke f(y) \subseteq \clarke f(x) + \BB(0,\varepsilon) \quad \text{for all~$y\in\BB(x,\delta)$}.
  \end{equation*}
  This implies that
  \begin{equation*}
      \gap(0, \clarke f(y)) \;\ge\; \gap(0, \clarke f(x)) - \varepsilon \quad \text{for all~$y\in\BB(x,\delta)$}.
  \end{equation*}
  Hence,~$\mu$ is lower semicontinuous.
\end{proof}

\section{Discussions about the definition of probabilistic ascent}\label{app:disc_def}

This section discusses Definition~\ref{def:prob-ascent} of probabilistic ascent, especially the role
of the indicator~$\ind(G_k \neq 0)$. As Remark~\ref{rem:cm_not_affect} will clarify,
this indicator ensures that the concept of
probabilistic ascent and the subsequent theory are invariant to the value
of~$\cm(\,\cdot\,, 0)$, which is purposefully unspecified in Definition~\ref{def:cm}.
Removing this indicator from Definition~\ref{def:prob-ascent} would make the theory rely
on~$\cm(\,\cdot\,, 0)$, leading to an undesirable restriction if one
defines~$\cm(\,\cdot\,, 0) =1$ following~\cite{Gratton_Royer_Vicente_Zhang_2015}, as will be
explained in Remark~\ref{rem:wrong-def}.

We begin with Proposition~\ref{prop:prob-asc-def-equiv}, which provides two equivalent reformulations
for the inequality in condition~\eqref{eq:prob-asc-def} of Definition~\ref{def:prob-ascent}.
This proposition can be proved by applying Lemma~\ref{lem:equiv} to the events~$E=\{G_k=0\}$
and~$F=\{\cm(\DD_k, -G_k) \le 0\}$ while noting
that~$E\cup F = \{\min_{\ddd\in\DD_k}\!\ddd^\trs G_k \,\ge\, 0\}$.

\begin{proposition}
    \label{prop:prob-asc-def-equiv}
    Let $p \in [0,1]$. Consider Algorithm~\ref{alg:direct_search_r} with~$f$ being continuously
    differentiable on $\RR^n$.
    For each~$k\ge 0$, the following inequalities are equivalent to each other.
    \begin{enumerate}
        \item $\Pr\left(\cm(\DD_k, -G_k) \le 0 \mid \FF_{k-1}) \ge p \ind(G_k \ne 0\right)$. \label{it:prob-asc-def-equiv-a}
        \item $\Pr\left(\{\cm(\DD_k, -G_k) \le 0\} \cap \{G_k\ne 0\} \mid \FF_{k-1}\right) \ge p \ind(G_k \ne 0)$. \label{it:prob-asc-def-equiv-b}
        \item $\Pr\left(\min_{\ddd\in\DD_k} \ddd^\trs G_k \ge 0 \mid \FF_{k-1}\right) \ge p$. \label{it:prob-asc-def-equiv-c}
    \end{enumerate}
\end{proposition}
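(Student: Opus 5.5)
The plan is to apply Lemma~\ref{lem:equiv} with $\GG=\FF_{k-1}$, $E=\{G_k=0\}$, and $F=\{\cm(\DD_k,-G_k)\le 0\}$. This lemma states exactly that three inequalities are mutually equivalent:
\[
\Pr(F\mid\GG)\ge p\ind(E^\comp),\qquad
\Pr(F\cap E^\comp\mid\GG)\ge p\ind(E^\comp),\qquad
\Pr(E\cup F\mid\GG)\ge p.
\]
With our choices, the first is item~\ref{it:prob-asc-def-equiv-a} and the second is item~\ref{it:prob-asc-def-equiv-b}. It therefore remains to check the hypotheses and to identify $E\cup F$.

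The measurability hypothesis $E\in\FF_{k-1}$ holds because $G_k=\nabla f(X_k)$ is $\FF_{k-1}$-measurable. Here $f$ is continuously differentiable and $X_k$ is $\FF_{k-1}$-measurable, as noted after~\eqref{eq:Fk}. We also need $F$ to be an event. This follows since $\DD_k$ is a finite set of random unit vectors, so $\cm(\DD_k,-G_k)$ is a measurable function of $(\DD_k,G_k)$ on $\{G_k\ne0\}$ and is constant on $\{G_k=0\}$.

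The main step is the set identity
\[
E\cup F=\Big\{\min_{\ddd\in\DD_k}\ddd^\trs G_k\ge 0\Big\}.
\]
I will check it on $\{G_k\ne0\}$ and on $E$ separately.
\begin{itemize}
\item On $\{G_k\ne 0\}$, Definition~\ref{def:cm} gives $\cm(\DD_k,-G_k)=\max_{\ddd}\,(-\ddd^\trs G_k)/\|G_k\|$, using $\|\ddd\|=1$. Hence $\cm(\DD_k,-G_k)\le0$ if and only if $\min_{\ddd}\ddd^\trs G_k\ge0$.
\item On $E$, the right-hand event holds trivially, since $\ddd^\trs 0=0\ge0$. Because $E$ is included in the union, both sides contain all of $E$, whatever convention is adopted for $\cm(\,\cdot\,,0)$ in Remark~\ref{rem:def_cm}.
\end{itemize}
So the two events coincide everywhere, and the third inequality of Lemma~\ref{lem:equiv} becomes item~\ref{it:prob-asc-def-equiv-c}.

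There is no genuine obstacle. The only delicate point is that the arbitrary value of $\cm(\,\cdot\,,0)$ must not break the identity, and this is handled by the fact that $E$ is absorbed into $E\cup F$.
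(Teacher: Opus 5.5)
Your proposal is correct and is essentially the paper's own proof: the paper also applies Lemma~\ref{lem:equiv} with $\GG=\FF_{k-1}$, $E=\{G_k=0\}$, and $F=\{\cm(\DD_k,-G_k)\le 0\}$, and uses the identity $E\cup F=\{\min_{\ddd\in\DD_k}\ddd^\trs G_k\ge 0\}$. You additionally spell out the $\FF_{k-1}$-measurability of $E$ and the case-by-case check of that identity, both of which the paper leaves implicit.
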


\begin{remark}\label{rem:cm_not_affect}
Neither item~\ref{it:prob-asc-def-equiv-b} nor~\ref{it:prob-asc-def-equiv-c} in
Proposition~\ref{prop:prob-asc-def-equiv} relies on the value of~$\cm(\,\cdot\,, 0)$.
Therefore, condition~\eqref{eq:prob-asc-def} based on~\ref{it:prob-asc-def-equiv-a}
is independent of~$\cm(\,\cdot\,, 0)$.
Consequently, no matter how we define~$\cm(\,\cdot\,, 0)$, Definition~\ref{def:prob-ascent} of
probabilistic ascent is invariant, and the results in this paper hold without any modification.
\end{remark}%

\begin{remark}
\label{rem:prob_asc_alt}
Since items~\ref{it:prob-asc-def-equiv-a} and~\ref{it:prob-asc-def-equiv-c}
in Proposition~\ref{prop:prob-asc-def-equiv} are equivalent, condition~\eqref{eq:prob-asc-def} is equivalent to
\begin{equation}
    \label{eq:prob-asc-def-equiv-c}
       \Pr\Big(\min_{\ddd\in\DD_k} \ddd^\trs G_k \ge 0 \Mid \FF_{k-1} \Big) \;\ge\; p \quad
       \text{for each~$k\ge 0$}.
\end{equation}
This is exactly condition~\eqref{eq:prob-asc-def-nonsmooth} in the continuously differentiable case,
where we have~$f^\circ(X_k;\ddd) = \ddd^\trs G_k$.
\end{remark}

What if we dropped the indicator~$\ind(G_k \ne 0)$
from the definition of probabilistic ascent, adopting an alternative definition that requires
  \begin{equation}\label{eq:wrong-def}
    \Pr\left(\cm\left(\DD_k,-G_k\right) \le 0 \mid \FF_{k-1} \right) \;\ge\; p\quad \text{for each~$k\ge 0$}
  \end{equation}
in place of condition~\eqref{eq:prob-asc-def}?
Above all, every result requiring~$p$-probabilistic ascent in this paper would still hold,
since~\eqref{eq:wrong-def} is stronger than~\eqref{eq:prob-asc-def}.
However, as we will explain in Remark~\ref{rem:wrong-def},
the theory built on this alternative definition would rely on the value of~$\cm(\,\cdot\,,
0)$, which would imply an undesirable limitation if one defines~$\cm(\,\cdot\,, 0) =1$ as
in~\cite{Gratton_Royer_Vicente_Zhang_2015}.
\begin{remark}\label{rem:wrong-def}
  Suppose that we define~$\cm(\,\cdot\,, 0) =1$ following~\cite{Gratton_Royer_Vicente_Zhang_2015}.
  Then condition~\eqref{eq:wrong-def} cannot be satisfied for any~$p>0$ unless
  \begin{equation}
      \label{eq:pgk0}
      \Pr(G_k = 0) \;=\; 0 \quad \text{for each}~k\ge 0.
  \end{equation}
  Condition~\eqref{eq:pgk0} means that Algorithm~\ref{alg:direct_search_r} almost never steps on a stationary point, which is a restriction that we do not want to impose.
  To see why~\eqref{eq:wrong-def} with~$p>0$ necessitates~\eqref{eq:pgk0},
  let us assume that~$\Pr(G_k = 0) > 0$ for a certain~$k\ge 0$. Then~\eqref{eq:wrong-def} and Lemma~\ref{lem:prob_sigma_to_event} will lead
  to the contradiction that
  \begin{equation*}
      p \;\le\;
      \Pr(\cm\left(\DD_k, -G_k \right) \le 0 \mid G_k = 0) \;=\;
      \Pr(\cm\left(\DD_k, 0 \right) \le 0 \mid G_k = 0) \;=\;
      0,
  \end{equation*}
  where the last step is because~$\cm(\DD_k, 0)=1$.
\end{remark}

To illustrate Remark~\ref{rem:wrong-def}, let us recall Example~\ref{exp:simple_illustrate},
choosing~$x_0 = \alpha_0$ in particular.
Then we have
\[
    \Pr(G_1 = 0) = \Pr(\ddd_0 = -1) = \frac{1}{2},
\]
violating~\eqref{eq:pgk0} for~$k=1$. Consequently, Remark~\ref{rem:wrong-def} shows that
condition~\eqref{eq:wrong-def} with~$p>0$ cannot
hold if one defines~$\cm(\,\cdot\,, 0) = 1$ as in~\cite{Gratton_Royer_Vicente_Zhang_2015},
making the theory based on~\eqref{eq:wrong-def} inapplicable to such a simple example,
even though~$\{\DD_k\}$ fits our definition of~$1/2$-probabilistic ascent according to
Proposition~\ref{prop:prob-asc}.
This example also clarifies why~\eqref{eq:pgk0} is undesirable to impose
when analyzing randomized algorithms like Algorithm~\ref{alg:direct_search_r}, although it is not
uncommon to assume that algorithms never step on stationary points in the deterministic case~(\eg,~\cite[Section~1]{Powell_1975}).

Before ending this section, we propose Definition~\ref{def:prob-des-alt} as an alternative to
Definition~\ref{def:p-k_descent} for probabilistic descent.
In this definition,~$\ind(G_k \ne 0)$ plays a role like in Definition~\ref{def:prob-ascent}.

\begin{definition}[Alternative definition of probabilistic descent]\label{def:prob-des-alt}
  Identical to Definition~\ref{def:p-k_descent} except that we replace
  condition~\eqref{eq:p-k_descent-def} with
  \begin{equation}
    \label{eq:prob-des-def-alt}
    \Pr\left(\cm\left(\DD_k,-G_k\right) \ge \kappa \mid \FF_{k-1}\right) \;\ge\; p\ind(G_k \ne 0) \quad \text{for each~$k\ge 0$}.
  \end{equation}
\end{definition}

Similar to Definition~\ref{def:prob-ascent}, Definition~\ref{def:prob-des-alt}
is invariant to the value of~$\cm(\,\cdot\,, 0)$.
If~$\cm(\,\cdot\,, 0) = 1$ as in~\cite{Gratton_Royer_Vicente_Zhang_2015},
it is equivalent to Definition~\ref{def:p-k_descent},
because we have~$\{G_k = 0\} \subseteq \left\{\cm(\DD_k, -G_k) \ge \kappa \right\}$ in this case,
which ensures the equivalence by Lemma~\ref{lem:equiv}.
However, if one chooses to define~$\cm(\,\cdot\,, 0) < \kappa$ (\eg,~$\cm(\,\cdot\,, 0) = 0$ may be appealing for
symmetry),
the argument in Remark~\ref{rem:wrong-def} can be adapted to show that
Definition~\ref{def:p-k_descent} also necessitates~\eqref{eq:pgk0} when~$p>0$, whereas
Definition~\ref{def:prob-des-alt} is free from this restriction.%

\section{Proofs of Propositions~\ref{prop:prob-asc},~\ref{prop:critical},~\ref{prop:prob-asc_nonsmooth},
and Lemmas~\ref{lem:strong_prob_exp_des}--\ref{lem:shift}}
\label{app:proofs}

This section proves several propositions and lemmas that appeared in the main text. They will be
proved in the order of their appearance.

Proposition~\ref{prop:prob-asc} can be established with the help of Corollary~\ref{cor:calculate_probability}.

\begin{proof}[Proof of Proposition~\ref{prop:prob-asc}]
    Fix an arbitrary~$k\ge 0$.
    Due to Proposition~\ref{prop:prob-asc-def-equiv}, it suffices to prove that
  \begin{equation}
    \label{eq:verify_prob-asc}
    \Pr\left(\min_{\ddd\in\DD_k} \ddd^\trs G_k \ge 0 \MID \FF_{k-1} \right) \;\ge\; 2^{-m}.
  \end{equation}
  By our assumption,~$\DD_k$ is independent of~$G_k$ and~$\FF_{k-1}$. Hence,
  Corollary~\ref{cor:calculate_probability} implies that the conditional probability
  in~\eqref{eq:verify_prob-asc} equals~$\expc\left(P(G_k) \mid \FF_{k-1}\right)$ with
\begin{equation}
    \label{eq:Pg}
    P(g) \;=\; \Pr\left(\min_{\ddd\in\DD_k} \ddd^\trs g \ge 0\right)
    \;=\; \Pr\left(\ddd^\trs g \ge 0 \text{~for each~} \ddd\in\DD_k \right).
\end{equation}
Since~$\DD_k$ consists of~$m$ independent random vectors uniformly distributed on the unit sphere,
the right-hand side of~\eqref{eq:Pg} is at least~$2^{-m}$ (it is exactly~$2^{-m}$ if~$g\ne 0$
and is~$1$ if~$g=0$). Therefore,~\eqref{eq:verify_prob-asc} holds.
\end{proof}

Indeed,~$\expc(P(G_k)\mid \FF_{k-1})$ in the above proof equals~$P(G_k)$ since~$G_k$
is~$\FF_{k-1}$-measurable, but we do not need this fact to establish the desired inequality.

Lemma~\ref{lem:strong_prob_exp_des} can be obtained using the moment method for
deriving Chernoff bounds~\cite{Mulzer_2018}.
\begin{proof}[Proof of Lemma~\ref{lem:strong_prob_exp_des}]
  The inequality in~\eqref{eq:chernoff_bound} holds trivially when~$k \le k_0$,
  because~\eqref{eq:YUE} implies that~$E_{k_0}\subseteq\{\overline{Y}_k = 0\}$ in this case,
  rendering~$\Pr(\overline{Y}_k\ge 1-q\mid E_{k_0}) = 0$ since~$q<1$.
  Let us focus on the nontrivial case where~$k > k_0\ge 0$.

  Fixing an arbitrary~$t>0$, we first make two claims: one is
  \begin{equation}\label{eq:first-claim}
    \Pr \left( 1 - \overline{Y}_k\le q \mid E_{k_0} \right)
    \;\le\;
    \me^{t (k q - k_0)}\;\expc\left( \prod_{\ell=k_0}^{k-1}\me^{-t(1-Y_\ell)} \MID E_{k_0}\right),
  \end{equation}
  and the other is
  \begin{equation}\label{eq:second-claim}
    \expc\left( \prod_{\ell=k_0}^{k-1}\me^{-t(1-Y_\ell)} \MID E_{k_0}\right) \;\le\; \exp[p (k-k_0)(\me^{-t} -1)].
  \end{equation}
  Once inequalities~\eqref{eq:first-claim} and~\eqref{eq:second-claim} are proved, we will have
  \begin{equation*}
    \Pr \left( 1 - \overline{Y}_k \le q \mid E_{k_0} \right) \;\le\; \exp[t(kq - k_0) + p (k-k_0) (\me^{-t} -1)],
  \end{equation*}
  which will render~\eqref{eq:chernoff_bound} according to Lemma~\ref{lem:inf_ineq}. We now prove the two claims by standard techniques.

  For~\eqref{eq:first-claim}, by the definitions of~$\overline{Y}_k$
  and~$E_{k_0}$ in~\eqref{eq:YUE} as well as Markov's inequality, we have
  \begin{equation*}%
    \begin{aligned}
      \Pr \left( 1-\overline{Y}_k\le q \mid E_{k_0} \right)
      & \;=\; \Pr \left( \exp\Big[-t\sum_{\ell=0}^{k-1}(1-Y_\ell)\Big]\ge \me^{-t k q} \MID E_{k_0}\right)\\
      & \;\le\; \me^{t k q}~ \expc\left( \prod_{\ell=0}^{k-1}\me^{-t(1-Y_\ell)} \MID E_{k_0}\right)
       \;=\; \me^{t (k q - k_0)}~\expc\left( \prod_{\ell=k_0}^{k-1}\me^{-t(1-Y_\ell)} \MID E_{k_0}\right),
    \end{aligned}
  \end{equation*}
  where the last equality is because~$\prod_{\ell=0}^{k_0-1}\me^{-t(1-Y_\ell)} = \me^{-tk_0}$ when~$E_{k_0}$ happens.

  For~\eqref{eq:second-claim}, we use the tower property of conditional expectation to get
  \begin{equation}\label{eq:tower}
    \expc\left( \prod_{\ell=k_0}^{k-1}\me^{-t(1-Y_\ell)} \MID \FF_{k_0-1}\right) \;=\; \expc\left( \expc\left( \me^{-t(1-Y_{k-1})} \mid \FF_{k-2} \right)\prod_{\ell=k_0}^{k-2}\me^{-t(1-Y_\ell)} \MID \FF_{k_0-1}\right),
  \end{equation}
  with~$\prod_{\ell=k_0}^{k-2}\me^{-t(1-Y_\ell)} = 1$ when $k=k_0+1$.
  By condition~\eqref{eq:Y_k-condition}, we have
  \begin{equation}\label{eq:condition_expect}
    \expc\left( \me^{-t(1-Y_{k-1})} \Mid \FF_{k-2} \right)
    \;\le\; p \me^{-t} + (1 - p) \;\le\; \exp(p \me^{-t} -p),
  \end{equation}
  where the last inequality is because~$x+1\le \me^x$ for all~$x$.
  By equality~\eqref{eq:tower} and inequality~\eqref{eq:condition_expect}, we have
  \begin{equation}\label{eq:recursive}
    \begin{aligned}
      \expc\left( \prod_{\ell=k_0}^{k-1}\me^{-t(1-Y_\ell)} \MID \FF_{k_0-1}\right)
      & \;\le\; \exp[p (\me^{-t} -1)]~\expc\left( \prod_{\ell=k_0}^{k-2}\me^{-t(1-Y_\ell)} \MID \FF_{k_0-1}\right)\\
      & \;\le\; \exp[p (k-k_0)(\me^{-t} -1)],
    \end{aligned}
  \end{equation}
  the second inequality following from the recursive application of the first one.
  Since~$E_{k_0}\in \FF_{k_0-1}$
  and~$\Pr(E_{k_0}) >0$ by Remark~\ref{rem:Pr_E_k}, inequality~\eqref{eq:recursive} implies~\eqref{eq:second-claim} by Lemma~\ref{lem:prob_sigma_to_event}.
  The proof is complete.
\end{proof}

Lemma~\ref{lem:shift} is a straightforward consequence of Lemma~\ref{lem:multi_rule}, or, more precisely,
Remark~\ref{rem:multi_rule}.

\begin{proof}[Proof of Lemma~\ref{lem:shift}]
    Since~$p>0$, the probability measure~$\Pr(\,\cdot\mid E_{k_0})$ is well defined according to
    Remark~\ref{rem:Pr_E_k}.
    Fix an integer~$k\ge 0$. Then~$E_{k_0}\in\FF_{k_0-1}\subseteq \FF_{k_0+k-1}=\tilde{\FF}_{k-1}$
    by the definitions of~$\{\FF_k\}$ and~$\{\tilde{\FF}_k\}$.
    Thus, condition~\eqref{eq:Y_k-condition} and Lemma~\ref{lem:pef} yield
    \[
        \Pr(\{\tilde{Y}_k = 0\} \cap E_{k_0} \mid \tilde{\FF}_{k-1})
        \;=\; \Pr(Y_{k_0+k} = 0 \mid \FF_{k_0+k -1})\, \ind(E_{k_0}) \;\ge\; p \ind(E_{k_0}).
    \]
    Hence, recalling that~$\tilde{\Pr}(\cdot)=\Pr(\,\cdot\mid E_{k_0})$,
    we have~$\tilde{\Pr}(\tilde{Y}_k=0\mid\tilde{\FF}_{k-1}) \ge p$ according to Remark~\ref{rem:multi_rule}.
\end{proof}

    Proposition~\ref{prop:critical} can be proved using
    L\'{e}vy's Conditional Borel--Cantelli Lemma~\cite[Corollaire~68]{Levy_1937}~(see
    also~\cite[Theorem~4.3.4]{Durrett_2019} and~\cite[Exercise~11.2.7]{Klenke_2020}).
\begin{proof}[Proof of Proposition~\ref{prop:critical}]
    Since~$Y_kU_k\ge 0$ for each~$k\ge 0$, it suffices to prove that
    \begin{equation}
        \label{eq:nto0}
        \Pr\left( \limsup_{k\to \infty}Y_kU_k \,>\, 0 \right) \;=\; 1.
    \end{equation}
    To this end, we first note that
    \begin{equation}
        \label{eq:unto0}
        \Pr\left( \limsup_{k\to \infty} U_k \,>\, 0 \right) \;=\; 1.
    \end{equation}
    Indeed, since~$\log U_k =\sum_{\ell=0}^{k-1} [Y_\ell \log \gamma + (1-Y_\ell)\log \theta]$,
    condition~\eqref{eq:critical_Y_k} and the definition of~$p_*$ in~\eqref{eq:define-p_*} ensure
    that~$\{\log U_k\}$ is a martingale with respect
    to~$\{\FF_{k-1}\}$, and this martingale has bounded increments. Hence,~\cite[Theorem~4.3.1]{Durrett_2019}
    indicates that~$\limsup_{k}\,(\log U_k) > -\infty$~\as, which is equivalent to~\eqref{eq:unto0}.

    To finish the proof of~\eqref{eq:nto0}, we will demonstrate that
    \begin{equation}
        \label{eq:io}
        \Pr\left(Y_kU_k \ge \varepsilon \;~\io\right)
        \;\ge\; \Pr\left(U_k \ge \varepsilon \;~\io\right)
        \quad \text{ for all } \varepsilon > 0.
    \end{equation}
    Once~\eqref{eq:io} is established, plugging~$\varepsilon = \ell^{-1}$ into it
    and then taking limit as~$\ell \to \infty$ will render
    $\Pr(\limsup_k Y_kU_k >0) \,\ge\, \Pr(\limsup_k U_k >0)$, which will lead to~\eqref{eq:nto0}
    in light of~\eqref{eq:unto0}.

    Now we prove~\eqref{eq:io} for an arbitrary~$\varepsilon > 0$. For each~$k\ge 0$,
    since~$\{Y_kU_k \ge \varepsilon\} = \{Y_k = 1\} \cap \big\{U_k \ge \varepsilon\big\}$
    and~$\left\{ U_k \ge \varepsilon\right\}\in\FF_{k-1}$, Lemma~\ref{lem:pef}
    and condition~\eqref{eq:critical_Y_k} yield
    \begin{equation}
        \label{eq:observe}
        \begin{split}
        \Pr(Y_kU_k \ge \varepsilon \mid \FF_{k-1})
        \;=\; \Pr(Y_k = 1 \mid \FF_{k-1}) \, \ind\!\left(U_k \ge \varepsilon\right)
        \;=\;  (1-p_*) \,\ind\!\left(U_k \ge \varepsilon\right).
        \end{split}
    \end{equation}
    Recalling that~$0<\theta<1\le \gamma$, we have~$p_* <1$ according to~\eqref{eq:define-p_*}.
    Hence,~\eqref{eq:observe} implies that
    \begin{equation}
        \label{eq:bcsum}
    \left\{\sum_{k=0}^\infty \Pr(Y_kU_k \ge \varepsilon \mid \FF_{k-1}) =\infty\right\}
    \;\supseteq\; \left\{\sum_{k=0}^\infty \ind(U_k \ge \varepsilon) = \infty\right\}
    \;=\; \left\{U_k \ge \varepsilon \,~\io\right\}.
    \end{equation}
    Meanwhile, the left-hand side of~\eqref{eq:bcsum} has the same probability as
    the event~$\left\{Y_kU_k \ge \varepsilon \,~\io\right\}$ by
    L\'{e}vy's Conditional Borel--Cantelli Lemma.
    Therefore,~\eqref{eq:io} holds and the proof is complete.
\end{proof}

The proof of Proposition~\ref{prop:prob-asc_nonsmooth} is similar to that of Proposition~\ref{prop:prob-asc}.

\begin{proof}[Proof of Proposition~\ref{prop:prob-asc_nonsmooth}]
  Similar to the proof of Proposition~\ref{prop:prob-asc}, Corollary~\ref{cor:calculate_probability}
  implies that the conditional probability in condition~\eqref{eq:prob-asc-def-nonsmooth}
  equals~$\expc\left(P(X_k) \mid \FF_{k-1}\right)$ with
\begin{equation}
    \label{eq:Px}
    P(x) \;=\; \Pr\left(\min_{\ddd\in\DD_k} f^{\circ}(x; \ddd)\ge 0\right).
\end{equation}
For any~$x \in\RR^n$, picking a~$g\in \clarke f(x)$, we
have~$P(x)\ge \Pr\left(\min_{\ddd\in\DD_k} \ddd^\trs g\ge 0\right)$, which is at least~$2^{-m}$
as in the proof of Proposition~\ref{prop:prob-asc}.
Hence,~\eqref{eq:prob-asc-def-nonsmooth} holds with~$p=2^{-m}$ and the proof is complete.
\end{proof}

\section{(Non-)Measurability of iterates with respect to polling directions}
\label{sec:measurability}

In this section, we discuss when the iterates of Algorithm~\ref{alg:direct_search_r} are
measurable with respect to the polling directions, and when they are not.
Often omitted in literature, this type of discussion is
essential for the mathematical rigour of our analysis. Indeed,
as we will see in Example~\ref{exp:measurable-polling}, the measurability can fail for some implementations of
Algorithm~\ref{alg:direct_search_r}.
For the concept of measurability, we refer to~\cite[Section~1.2]{Durrett_2019}.

Lemma~\ref{lem:measurable-polling} establishes the measurability of the iterates
for certain implementations of Algorithm~\ref{alg:direct_search_r},
covering~\cite[Algorithm~2.1]{Gratton_Royer_Vicente_Zhang_2015}.
The proof is elementary, but it clarifies the role of the polling strategy in the measurability.

\begin{lemma}\label{lem:measurable-polling}
  Let~$m$ be a positive integer and~$f$ be continuous on~$\RR^n$.
  Consider Algorithm~\ref{alg:direct_search_r} with the following configuration for each~$k\ge 0$.
  \begin{enumerate}
    \item Generate~$\DD_k = \{\ddd_k^1, \dots, \ddd_k^m\}$ with~$\ddd_k^1, \dots, \ddd_k^m$ being random vectors. \label{it:measurable-polling-a}
    \item Set the order of function evaluations as~$f(X_k+A_k\ddd_k^1),\dots,f(X_k+A_k\ddd_k^m)$ before polling. \label{it:measurable-polling-b}
    \item Use either opportunistic polling or complete polling. \label{it:measurable-polling-c}
  \end{enumerate}
  Let~$\FF_{k}^\DD = \sigma(\DD_0,\dots,\DD_{k})$ for each~$k\ge 0$ and~$\FF_{-1}^\DD = \{\emptyset,\Omega\}$.
  Then~$X_k$ is~$\FF_{k-1}^\DD$-measurable for each~$k\ge 0$.
\end{lemma}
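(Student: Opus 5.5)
The plan is induction on $k$, proving simultaneously that $X_k$ and $A_k$ are $\FF_{k-1}^\DD$-measurable. The base case $k=0$ is immediate: $X_0=x_0$ and $A_0=\alpha_0$ are deterministic, hence measurable with respect to the trivial $\sigma$-algebra $\FF_{-1}^\DD=\{\emptyset,\Omega\}$.

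For the inductive step, assume $X_k$ and $A_k$ are $\FF_{k-1}^\DD$-measurable. Since $\FF_{k-1}^\DD\subseteq\FF_k^\DD$ and $\ddd_k^1,\dots,\ddd_k^m$ are $\FF_k^\DD$-measurable, each trial point $Z_k^i=X_k+A_k\ddd_k^i$ is $\FF_k^\DD$-measurable. Continuity of $f$ then makes $f(Z_k^i)$, $f(X_k)$ and $\rho(A_k)$ measurable; $\rho$ is nondecreasing and hence Borel. Therefore each success event
\[
S_k^i=\{f(X_k)-f(Z_k^i)>\rho(A_k)\}
\]
lies in $\FF_k^\DD$. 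The key point is that, under items (b)--(c), the index $I_k$ of the accepted direction is a Borel function of these measurable quantities.

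\begin{itemize}
\item \textbf{Opportunistic polling.} Take $\{I_k=i\}=S_k^i\cap\bigcap_{j<i}(S_k^j)^\comp$ for $i=1,\dots,m$, and $\{I_k=0\}=\bigcap_j (S_k^j)^\comp$, where $I_k=0$ means no direction is accepted.
\item \textbf{Complete polling.} Take $\{I_k=i\}$ to be $S_k^i$ intersected with $\{f(Z_k^i)<f(Z_k^j)\ \forall j<i\}$ and $\{f(Z_k^i)\le f(Z_k^j)\ \forall j>i\}$. This encodes that index $i$ attains the minimal value among successful indices, with the first index winning ties.
\end{itemize}

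I need to check the complete-polling tie-breaking carefully. A successful index has a value below $f(X_k)-\rho(A_k)$, so the minimizer over all $j$, if it is successful, coincides with the minimizer over the successful indices. It therefore suffices to restrict the comparison to the successful set, which is a finite Boolean combination of measurable events. Hence each $\{I_k=i\}$ belongs to $\FF_k^\DD$.

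To finish, write
\[
X_{k+1}=X_k+\sum_{i=1}^m \ind(I_k=i)\,A_k\ddd_k^i
\]
and
\[
A_{k+1}=A_k\big(\gamma\ind(I_k\ne0)+\theta\ind(I_k=0)\big).
\]
Both are finite sums and products of $\FF_k^\DD$-measurable random variables, which closes the induction. I expect the main obstacle to be the careful measurable description of the complete-polling selection with ties, rather than anything conceptual.
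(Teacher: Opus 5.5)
Your proposal is correct and is essentially the paper's proof. Both argue by induction on the joint $\FF_{k-1}^\DD$-measurability of $X_k$ and $A_k$, encode the accepted direction by indicators built from the $\FF_k^\DD$-measurable quantities $f(X_k)$, $f(X_k+A_k\ddd_k^i)$, and $\rho(A_k)$ (with the same strict/non-strict tie-breaking product for complete polling), and then write $X_{k+1}$ and $A_{k+1}$ as finite sums and products of measurable quantities.
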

\begin{proof}
  We will prove by induction that~$X_k$ and~$A_k$ are both~$\FF_{k-1}^\DD$-measurable for each~$k\ge 0$.
  The base case~$k=0$ holds trivially since~$X_0$ and~$A_0$ are not random.
  Assuming that~$X_{k}$ and~$A_k$ are~$\FF_{k-1}^\DD$-measurable, let us prove that~$X_{k+1}$
  and~$A_{k+1}$ are both~$\FF_{k}^\DD$-measurable.
  Before starting, note that the induction hypothesis implies
  that~$X_k$ and~$A_k$ are~$\FF_{k}^\DD$-measurable since~$\FF_{k-1}^\DD\subseteq\FF_{k}^\DD$.
  Define~$\ddd_k^0=0$ and
\begin{equation*}
    V^i \;=\; f(X_k + A_k \ddd_k^i), \quad i=0,1,\dots,m.
\end{equation*}
Then each~$V^i$ is~$\FF_{k}^\DD$-measurable since~$f$ is continuous.
$\rho(A_k)$ is also~$\FF_{k}^\DD$-measurable as~$\rho$ is monotone.

Now, we consider the case of complete polling. In this case,
  \begin{equation}
      \label{eq:X_kp_comp}
      X_{k+1} \;=\; X_k + A_k \sum_{i=1}^m \ddd_k^i W^i,
  \end{equation}
  where~$W^i$ ($i = 1, \dots, m$) is the indicator defined by
\begin{equation*}
    W^i \;=\; \ind\!\left( i \text{ is the smallest integer such that } V^i = \min\{V^1, \dots, V^m\},
    \text{ and } V^0 - V^i > \rho(A_k)\right).
\end{equation*}
Note that at most one of~$W^1,\dots,W^m$ is~$1$, and they are all~$0$ if complete polling
fails.
Moreover,
\begin{equation*}
    W^i \;=\;
        \Bigg[
            \prod_{j=1}^{i-1} \ind\!\left(V^i < V^j\right)
        \prod_{j=i+1}^m \ind\!\left(V^i \le V^j\right)
        \Bigg]
        \ind\!\left(V^0 - V^i > \rho(A_k)\right),
\end{equation*}
which is~$\FF_{k}^\DD$-measurable due to the~$\FF_{k}^\DD$-measurability of~$V^0,\dots,V^m$
and~$\rho(A_k)$. Therefore, $X_{k+1}$ is~$\FF_{k}^\DD$-measurable according
to~\eqref{eq:X_kp_comp}. Consequently, $A_{k+1}$ is~$\FF_{k}^\DD$-measurable by the recurrence
relation~\eqref{eq:Ak_recur} and the induction hypothesis.
The induction finishes for complete polling.

The case of opportunistic polling can be handled similarly. In this case,
equation~\eqref{eq:X_kp_comp} holds with
\begin{equation*}
    \begin{split}
    W^i
    & \;=\; \ind\!\left( i \text{ is the smallest integer such that } V^0 - V^i > \rho(A_k)\right) \\
    & \;=\;
    \Bigg[\prod_{j=1}^{i-1} \ind\!\left(V^0 - V^j \le \rho(A_k) \right)\Bigg]
    \ind\!\left(V^0 - V^i > \rho(A_k)\right),
    \end{split}
\end{equation*}
which is~$\FF_{k}^\DD$-measurable. Everything else is the same as complete polling.
\end{proof}

However, if the polling in Algorithm~\ref{alg:direct_search_r} involves randomness beyond the
polling directions, then~$X_{k}$ may not be~$\FF_{k-1}^\DD$-measurable.
This is illustrated by Example~\ref{exp:measurable-polling}.
For this reason, our analysis uses
$\FF_k=\sigma (\DD_0, X_1, \dots, \DD_k, X_{k+1})$ rather than~$\FF_k^\DD$ as the filtration.
\begin{example}
  \label{exp:measurable-polling}
  Let~$m$ be a positive integer and~$f$ be continuous on~$\RR^n$.
  Consider Algorithm~\ref{alg:direct_search_r} with the following configuration for each~$k\ge 0$.
  \begin{enumerate}
    \item Generate $\DD_k = \{\ddd_k^1, \dots, \ddd_k^m\}$ with~$\ddd_k^1, \dots, \ddd_k^m$ being random vectors. \label{it:measurable-polling-ex-a}
    \item Pick a random permutation~$\pi_k$ of~$\{1,\dots,m\}$. \label{it:measurable-polling-ex-b}
    \item Set the order of function evaluations as~$f(X_k+A_k\ddd_k^{\pi_k(1)}),\dots,f(X_k+A_k\ddd_k^{\pi_k(m)})$ before polling. \label{it:measurable-polling-ex-c}
    \item Use opportunistic polling. \label{it:measurable-polling-ex-d}
  \end{enumerate}
Since the calculation of~$X_k$ involves~$\pi_{k-1}$,
we cannot guarantee the~{$\FF_{k-1}^{\DD}$-measurability} of~$X_k$
if~$\pi_{k-1}$ is not~{$\FF_{k-1}^\DD$-measurable},
or informally, if~$\pi_{k-1}$ contains randomness beyond~$\FF_{k-1}^\DD$.
For example,
similar to~\cite[Section~4]{Custodio_Vicente_2007},
we can define~$\pi_{k-1}$
by ranking the directions in~$\DD_{k-1}$ according to a stochastic oracle
independent of~$\DD_{k-1}$.
Then~$X_k$ is measurable with respect to~$\sigma(\DD_0, \pi_0, \dots, \DD_{k-1}, \pi_{k-1})$,
but not necessarily with respect to~$\FF_{k-1}^\DD$.
\end{example}

\end{appendices}

\end{document}